\newtheorem{lemma}{Lemma}[section]
\newtheorem{theorem}[lemma]{Theorem}
\newtheorem{definition}[lemma]{Definition}
\newtheorem{remark}[lemma]{Remark}
\newtheorem{question}[lemma]{Question}
\newcommand{\FF}{\mathbb F}
\newcommand{\NN}{\mathbb N}
\newcommand{\PP}{\mathbb P}
\newcommand{\QQ}{\mathbb Q}
\newcommand{\RR}{\mathbb R}
\newcommand{\ZZ}{\mathbb Z}
\renewcommand{\cD}{\mathcal D}
\newcommand{\cE}{\mathcal E}
\newcommand{\cM}{\mathcal M}
\newcommand{\GL}{\mathop{\mathrm {GL}}\nolimits}
\newcommand{\SL}{\mathop{\mathrm {SL}}\nolimits}
\newcommand{\Orth}{\mathop{\null\mathrm {O}}\nolimits}
\newcommand{\Ap}{\mathop{\mathrm {Ap}}\nolimits}
\newcommand{\Stab}{\mathop{\mathrm {Stab}}\nolimits}
\renewcommand{\Im}{\mathop{\mathrm {Im}}\nolimits}
\newcommand{\NS}{\mathop{\null\mathrm {NS}}\nolimits}
\newcommand{\Num}{\mathop{\null\mathrm {Num}}\nolimits}
\newcommand{\Eig}{\mathop{\null\mathrm {Eig}}\nolimits}
\newcommand*{\shifttext}[2]{
  \settowidth{\@tempdima}{#2}
  \makebox[\@tempdima]{\hspace*{#1}#2}
}
\newcommand{\shiftleft}[2]{\makebox[0pt][r]{\makebox[#1][l]{#2}}}
\newcommand{\Isperp}{Is^{\shiftleft{3pt}{\raisebox{2pt}{$\mathsmaller{\mathsmaller{\perp}}$}}}\hspace{4pt}}
\newcommand{\IsperpB}{Is^{\shiftleft{3pt}{\raisebox{2pt}{$\mathsmaller{\mathsmaller{\perp}}$}}}}
\newcommand{\Kthree}{\mathop{\mathrm {K3}}\nolimits}
\newcommand{\En}{\mathop{\mathrm {En}}\nolimits}
\newcommand{\qedsymbol}{\mbox{$\Box$}}
\newcommand{\qed}{\unskip\nobreak\hfil\penalty50\hskip1em\hbox{}\nobreak
\hfill\qedsymbol\parfillskip=0pt\finalhyphendemerits=0}
\newenvironment{proof}{\begin{ProofwCaption}{Proof}}{\end{ProofwCaption}}
\newenvironment{ProofwCaption}[1]
 {\addvspace\theorempreskipamount \noindent{\it #1.}\rm}
 {\qed \par \addvspace\theorempostskipamount}
\newcommand\blfootnote[1]{
  \begingroup
  \renewcommand\thefootnote{}\footnote{#1}
  \addtocounter{footnote}{-1}
  \endgroup
}
\begin{document}

\title{Moduli of polarized Enriques surfaces -- computational aspects}
\author{Mathieu Dutour Sikiri\'c and Klaus Hulek}
\date{}
\maketitle

\begin{abstract}
Moduli spaces of (polarized) Enriques surfaces can be described as open subsets of modular varieties of orthogonal type. It was shown by Gritsenko and Hulek that there are, up to isomorphism, only finitely many
different moduli spaces of polarized Enriques surfaces. Here we investigate the possible arithmetic groups and show that there are exactly $87$ such groups up to conjugacy. We also show that all moduli spaces
are dominated by a moduli space of polarized Enriques surfaces of degree $1240$.
Ciliberto, Dedieu, Galati and Knutsen have also investigated moduli spaces of polarized Enriques surfaces in detail. We discuss how our enumeration relates to theirs.
We further compute the Tits building of the groups in question.
Our computation is based on groups and indefinite quadratic forms and the algorithms used are explained.
\end{abstract}

\section{Introduction}\label{sec:intro}
\blfootnote{
2020 \emph{Mathematics Subject Classification}. 14J10, 14J28, 11H55, 20B40.
}
The moduli space $\cM^0_{\En}$ of Enriques surfaces is an open subset of a $10$-dimensional orthogonal
modular variety, which was shown by Kond\=o \cite{Kon1} to be rational.
This description is obtained by considering the universal cover of Enriques surfaces, which leads to the moduli space of $K3$ surfaces with a fixed-point free involution. Indeed, $\cM^0_{\En}$
can be viewed as the moduli space of $N$-polarized $K3$ surfaces where
\begin{equation}\label{def:latticeN}
N = U + U(2) + E_8(-2).
\end{equation}\label{def:N}
Here $U$ denotes a hyperbolic plane, $E_8(-1)$ is the negative-definite $E_8$-lattice and
$U(2)$ and $E_8(-2)$ means that the bilinear forms are multiplied by $2$.
These $K3$ surfaces carry a non-symplectic free involution giving rise to a quotient which is an Enriques surface.

Taking a slightly different viewpoint, one can also consider moduli spaces of {\em polarized } Enriques surfaces, i.e. Enriques surfaces with an ample line bundle. These moduli spaces come in two flavours,
namely as moduli spaces of {\em polarized } or {\em numerically polarized } Enriques surfaces, depending on whether one considers the polarization as an element in the N\'eron-Severi group $\NS(S)$
or the group $\Num(S)$ of divisors modulo numerical equivalence. We recall that $\Num(S)$ is the quotient of $\NS(S)$ by the $2$-torsion element given by the canonical class $K_S$.
It was shown in \cite{GH3} that the moduli spaces $\cM^a_{\En,h}$ of numerically polarized Enriques surfaces are open subsets of $10$-dimensional orthogonal modular varieties $\cM_{\En,h}$
(see (\ref{equ:descriptionmoduliEnriques})) and the moduli spaces
 $\widehat\cM_{\En,h}^{a}$ of polarized Enriques surfaces are \'etale $2:1$ covers $\widehat\cM_{\En,h}^{a} \to \cM^a_{\En,h}$.
 In \cite{GH3} we also asked the question when this covering is connected. A complete answer
 was given in \cite[Theorem 1.1]{Kn}: the space  $\cM_{\En,h}^{a}$ is connected if and only if the class $h$ is not $2$-divisible in $\Num(S)$.

 Moduli spaces of polarized Enriques surfaces behave in some ways very differently from moduli spaces of polarized $K3$ surfaces.
 Indeed, it was shown in \cite[Theorem 1.1]{GH3} that there  are only finitely many moduli spaces, up to isomorphism, of (numerically) polarized Enriques surfaces. The starting
 point of this paper is the question: how many different moduli spaces of Enriques surfaces exist? Here we shall treat this question from the point of view of orthogonal modular varieties.

 To describe the results of this paper we need some more details concerning
 moduli spaces of numerically polarized Enriques surfaces, which are all open subsets of orthogonal modular varieties.
As usual (see also Section \ref{sec:construction} for more details), we denote by  $\cD_N$ a connected component of the $10$-dimensional type IV domain $\Omega_N$ associated to $N$ and by
$\Orth(N)$ and $\Orth^+(N)$ the orthogonal group and the orthogonal group of transformations with real spinor norm $1$. These act on $\Omega_N$  and $\cD_N$ respectively  and we set
\begin{equation*}
\cM_{\En}:= \Orth^+(N) \backslash \cD_N.
\end{equation*}
The moduli space $\cM^0_{\En}$ of  Enriques surfaces is the open subset of $\cM_{\En}$
\begin{equation*}
\cM^0_{\En}:= \cM_{\En} \setminus \Delta_{-2}
\end{equation*}
where $\Delta_{-2}$ is the image of all hyperplanes orthogonal to roots $r$ in $N$.
This is necessary to ensure that we really have period points on Enriques surfaces.
By \cite[Theorem 2.13]{Nam} the hypersurface $\Delta_{-2}$ is irreducible.

There is also the notion of moduli spaces of Enriques surfaces with a level-$2$ structure.
For this we consider the dual lattice of $N$, which we denote by $N^{\vee}$, and the stable orthogonal group $\widetilde \Orth(N)$, which is defined as the group of
all elements in $\Orth(N)$ acting trivially on the discriminant $D(N)=N^{\vee}/N$. We set
\begin{equation*}
\widetilde \Orth^+(N):= \Orth^+(N)  \cap \widetilde \Orth(N)
\end{equation*}
and note that this is an index $2$ subgroup since the reflection with respect to a vector of length $2$ in the
summand $U$ of $N$ gives an element in $\widetilde \Orth(N)$ with real spinor norm $-1$.
Let
\begin{equation*}
\widetilde {\cM}_{\En}:= \widetilde \Orth^+(N)\backslash \cD_N.
\end{equation*}
The open subset
\begin{equation*}
\widetilde {\cM}^0_{\En}:= \widetilde {\cM}_{\En} \setminus \widetilde {\Delta}_{-2}
\end{equation*}
defined as the complement of the hypersurfaces orthogonal to the roots can be interpreted as the {\em moduli space of Enriques surfaces with a level 2 structure}.

We recall that $D(N)=N^{\vee}/N\cong (\FF_2)^{10}$ and
\begin{equation*}
\Orth(D(N)) \cong \Orth^+(\FF_2^{10})
\end{equation*}
is the orthogonal group of {\em even} type whose order is $\left\vert\Orth^+(\FF_2^{10})\right\vert = 2^{21} \cdot 3^5 \cdot 5^2 \cdot 7 \cdot 17 \cdot 31$. For details see \cite[\S 1]{Kon2}, \cite[Chap. I, \S 16, Chap. II. \S 10]{Die}.
We also recall that $\Orth(N) \to \Orth(D(N))$ is surjective (see \cite[Theorem 3.6.3]{Nik} and
\begin{equation}\label{equ:quotientgroups}
\Orth(D(N)) \cong \Orth(N)/\widetilde\Orth(N) \cong \Orth^+(N)/\widetilde\Orth^+(N).
\end{equation}

We will describe the construction of moduli spaces of polarized Enriques surface in more detail in Section \ref{sec:construction}. Here we only want to state
that all moduli spaces $\cM^a_{\En,h}$ are open subsets of orthogonal modular varieties
\begin{equation} \label{equ:descriptionmoduliEnriques}
{\cM}_{\En,h}:= \Gamma^+_h \backslash \cD_N
\end{equation}
where
\begin{equation} \label{equ:subgroupsconatined}
\widetilde\Orth^+(N) \subset  \Gamma^+_h \subset \Orth^+(N).
\end{equation}
From this one has to remove the hyperplanes orthogonal to the roots and some hyperplanes which are orthogonal
to certain $(-4)$-vectors. The latter is necessary to ensure that $h$ represents an ample class, not removing these hyperplanes means that we are also considering quasi-polarizations, i.e. nef and big
line bundles. Here we are exclusively concerned with the orthogonal varieties ${\cM}_{\En,h}$. Obviously, there are only finitely many possible choices of subgroups $\Gamma_h$. Each such choice
defines an orthogonal modular variety ${\cM}_{\En,h}$ which is covered by $\widetilde {\cM}_{\En}$ and covers ${\cM}_{\En}$ in turn:
\begin{equation*}
\widetilde {\cM}_{\En} \to \cM_{\En,h} \to {\cM}_{\En}.
\end{equation*}
Note that the maps involved here are not necessarily Galois coverings.

In the situation described here, a number of natural questions arise which we want to address in this paper.
The first question is to ask for the number of possible modular varieties which arise in connection with moduli spaces of polarized Enriques surfaces. We rephrase this question in terms
of arithmetic groups:

\begin{question}\label{qu:1}
How many subgroups $\Gamma^+_h$, arising from moduli spaces ${\cM}_{\En,h}$ of polarized Enriques surfaces, (cf. (\ref{equ:descriptionmoduliEnriques})), exist (up to conjugacy)?
\end{question}

We will see in Theorem \ref{teo:87groups} that there are $87$ such conjugacy classes. In Tables \ref{table_subgroups1} and \ref{table_subgroups2} we will provide further information
about these groups, in particular their index in $\Orth^+(N)$ (which is
equivalent to knowing the index of $\tilde \Orth^+(N)$ in $\Gamma^+_h$).
This can be rephrased in terms of subgroups of the finite group orthogonal group $\Orth^+(\FF_2^{10})$, see Question \ref{qu_subgroup_enum}.

The next question concerns the relation between the degree $h^2=2d$ of a polarization and the possible groups $\Gamma^+_h$. In the case of $K3$ surface, given the degree $h^2=2d$ of
a primitive polarization, we obtain an irreducible moduli space of $2d$-polarized $K3$ surfaces. The reason is that the $K3$-lattice $L_{K3}=3U + 2E_8(-1)$ is unimodular and
the group $\Orth(L_{K3})$ acts transitively on all primitive vectors of fixed norm. This is no longer true in the case of Enriques surfaces.
Indeed, for given degree $h^2=2d> 2$ one has to expect many primitive vectors $h$ which are not equivalent modulo the action of
the isometry group of the N\'eron-Severi lattice $M(1/2):=U+E_8(-1)$.

This leads us to our next
\begin{question}\label{qu:3}
Enumerate all inequivalent primitive vectors $h \in U+E_8(-1)$ of given (small) degree $h^2=2d$ under the action of the group $\Orth(U+E_8(-1))$.
\end{question}

We will give an answer to this in Theorem \ref{teo:smalldegreevectors}. In Table \ref{ListVectorsNormAtMost30} we gather the information as to which polarizations define conjugate groups $\Gamma^+_h$.

In \cite[Proposition 5.7]{GH3} the existence of a polarization $h_0$ was shown such that $\Gamma^+_{h_0}$ is minimal, i.e. $\Gamma^+_{h_0}=\widetilde\Orth^+(N)$.
This is of interest as the corresponding modular variety ${\cM}_{\En,h_{0}}=\widetilde {\cM}_{\En}$ covers all varieties ${\cM}_{\En,h}$. Hence it is natural to ask
\begin{question}\label{qu:2}
What is the minimal degree $d_{\min}=h_0^2$ such that there exists a vector $h_0$ with $\Gamma^+_{h_0}=\widetilde\Orth^+(N)$, i.e. ${\cM}_{\En,h_{0}}=\widetilde {\cM}_{\En}$?
Is such a vector of minimal degree unique?
\end{question}

We shall provide an answer to this question in Theorem \ref{teo:degreemaximalgroup} where we will see that there is a unique such vector $h_0$ of minimal degree $h_0^2=1240$.

Naturally, moduli spaces of polarized Enriques surfaces of small degree have been studied classically. For a discussion of classical constructions for $d \leq 10$ we refer to Dolgachev's paper
\cite{Dol3}.  In the case of degree $4$ Casnati \cite{Cas} studied polarizations which are base-point free and lead to a $4:1$ cover of the projective plane (also called Cossec-Verra polarizations).
He showed that this defines an irreducible moduli space which is also rational. There are also degree $4$ polarizations (ample line bundles) which are not base point free. These are sometimes not considered  to be
polarizations in the literature (see \cite[Section 1]{Cas}). The case of (base point free) polarizations of degree $6$ is the classical case representing Enriques surfaces as singular sextic surfaces in $\PP^3$.
For degree $10$ there exists one polarization with generically very ample line bundle.
This leads to Reye congruences, respectively degree $10$ models in $\PP^5$.
We shall discuss these cases and the relation with our calculations more systematically in Section \ref{subsec:classical}.

Ciliberto, Dedieu, Galati and Knutsen undertook a very systematic enumeration of moduli spaces of polarized Enriques surfaces in \cite{CDGK}, based on the $\phi$-invariant of a polarization. This is the minimal
degree of a polarization on an effective elliptic curve.
This enumeration was taken further in \cite{Kn} where it was shown that the moduli spaces depend on a finer invariant, called the $\phi$-vector, which is the minimal (defined in a proper way) degree of the
polarization with respect to a whole isotropic $10$-sequence (see \cite[Theorem 1.4]{Kn})
This leads us to the

\begin{question}\label{qu:4}
How can the enumerations given by our methods and that of Ciliberto et al. be matched?
\end{question}

A complete matching will be provided in Tables \ref{Expression_Isotropic_Vectors_Part1} and \ref{Expression_Isotropic_Vectors_Part2}.

When one wants to study the geometry of moduli spaces one typically has to work with projective compactifications of the modular varieties $\Gamma^+_h \backslash \cD_N$.
The natural choices here are the Baily-Borel and
toroidal compactifications. The first is canonically defined for all orthogonal modular varieties, the second involves a choice of fans. In either case it is important to know the cusps
as these are in $1:1$ correspondence with the boundary components of the Baily-Borel compactification. In the orthogonal case one has $0$-dimensional cusps (points) and
$1$-dimensional cusps (modular curves) which have to be added to the orthogonal modular variety to obtain the Baily-Borel compactification.
We recall that  for all arithmetic orthogonal groups $\Gamma$ of lattices of signature $(2,n)$ the $0$ and $1$-dimensional
cusps are in $1:1$ correspondence with the $\Gamma$-orbits of rational isotropic lines and rational isotropic planes respectively.
More generally, a $0$-dimensional cusp is contained in the closure of a $1$-dimensional cusp if and only if $l \subset e$ for some representatives of the corresponding isotropic line and plane respectively.
The Tits building is the $1$-complex whose vertices are the orbits of isotropic lines and planes and whose edges are given by the inclusion relation. The Tits building $\mathcal T (\Gamma^+_h)$ encodes the
combinatorial structure of the boundary of the Baily-Borel compactification of  $\Gamma^+_h \backslash \cD_N$.
This leads to the
\begin{question}\label{qu:5}
How many $0$- and $1$-dimensional cusps do the varieties $\Gamma^+_h \backslash \cD_N$ have. More generally, what can we say about the Tits building $\mathcal T (\Gamma^+_h)$?
\end{question}

This question will be addressed in Section \ref{subsec:isotropic}.

\subsection* {Acknowledgements} The first author is grateful to DFG for partial support under DFG Hu 337/7-2 and to Leibniz University Hannover for hospitality. He also thanks G.~Nebe and S.~Brandhorst
for an exchange of ideas at an early stage of then project.
We would like to thank A.~L.~Knutsen for interesting discussions concerning \cite{CDGK} and him and S.~Brandhorst for very helpful comments on a first version of this manuscript.

\section{Construction of the moduli spaces}\label{sec:construction}

In this section we want to give more details on the construction of the moduli spaces and the groups involved. The starting point is the fact that for an Enriques surface
$S$ the group of divisors modulo numerical equivalence is
\begin{equation*}
H^2(S,\ZZ)_f=\Num (S) \cong U + E_8(-1).
\end{equation*}
The fact that the canonical class $K_S$ is $2$-torsion implies the existence of an
\'etale $2:1$ cover $p: X \to S$ where  $X$ is a $\Kthree$ surface. We denote the corresponding involution on $X$ by $\sigma: X \to X$.
It is well known that the intersection form equips $H^2(X,\ZZ)$ with the structure of a lattice, namely
$$
H^2(X,\ZZ)\cong 3U + 2E_8(-1) =: L_{\Kthree}
$$
where we refer to $L_{\Kthree}$ as the $\Kthree$ lattice.
Under the $2:1$ cover $p: X \to S$ the intersection form is multiplied by a factor $2$
and thus
$$
p^*(H^2(S,\ZZ)) \cong U(2) + E_8(-2)=:M.
$$
By \cite[Theorem 1.14.4]{Nik} the primitive embedding of the lattice  $U(2) + E_8(-2)$ into the $\Kthree$ lattice $L_{\Kthree}$ is unique (up to the action of $\Orth(L_{K3})$.
Hence we may assume that
$M$ is embedded into $L_{\Kthree}$ by the embedding $(x,u) \mapsto (x,0,x,u,u)$ where $x\in U(2), u \in E_8(-2)$.
When we refer to the sublattice $M$ of $L_{\Kthree}$ we will always assume this embedding.
The sublattice $M$ also has an interpretation in terms of the involution
$$
\rho: L_{\Kthree}=3U + 2E_8(-1) \to L_{\Kthree}=3U + 2E_8(-1),
$$
$$
\rho(x,y,z,u,v) = (z,-y,x,v,u).
$$
Clearly $M$ can be identified with the $(+1)$-eigenspace  $\Eig(\rho)^+$ of $\rho$.
The  $(-1)$-eigenspace $\Eig(\rho)^-$ can be identified with the lattice $N$ as defined in (\ref{def:N}). Indeed, we can embed the lattice $N$ primitively into $L_{K3}$ by
$(y,z,v) \mapsto (z,y,-z,v,-v)$ and this gives
\begin{equation*}
\Eig(\rho)^-=M^{\perp}_{L_{\Kthree}}\cong N.
\end{equation*}

We shall now explain how the groups $\Gamma^+_h$ arise.
When one wants to construct moduli spaces of numerically polarized Enriques surfaces one considers pairs $(S,h)$ where $h$ is the class of a numerical polarization.
This defines an element
$h\in U + E_8(-1) = M(1/2)$ of positive degree  $h^2=2d>0$. For what follows we can and will assume that this vector is primitive.
Given $h$ one has to consider the stabilizer
\begin{equation*}
\Orth(M(1/2),h)=\Orth(M,h)= \{g \in \Orth(M(1/2))=\Orth(M) \mid g(h)=h \}.
\end{equation*}
The natural maps $\pi_M: \Orth(M) \to \Orth(D(M))$ and $\pi_N: \Orth(N) \to \Orth(D(N))$ are surjective. Since $M$ and $N$ are orthogonal to each other in the $K3$ lattice
$L_{K3}$, the discriminant groups $D(M)$ and  $D(N)$ are naturally isomorphic:
\begin{equation*}
(D(M), q_M) \cong (D(N), -q_N).
\end{equation*}
Here $q_M$ and $q_N$ are the induced quadratic forms.
We shall forthwith identify these discriminant groups and hence also $\Orth(D(M))$ and $\Orth(D(N))$.

The crucial definition is
\begin{equation}\label{equ:modulargroup}
\Gamma_h: =\pi_N^{-1}(\pi_M(\Orth(M,h))) \subset \Orth(N).
\end{equation}
Since $\widetilde \Orth(N) \subset \Gamma_h$ is a normal subgroup of $\Orth(N)$ of finite index, the group $\Gamma_h$ is
an arithmetic subgroup of $\Orth(N)$. We again note that the subgroup
\begin{equation*}
\Gamma^+_h = \Gamma_h\cap \Orth^+(N)
\end{equation*}
has index $2$.

In fact we can rephrase our Question \ref{qu:1} on the groups $\Gamma^+_h$ entirely in terms of subgroups of $\Orth^+(\FF_2^{10})$.  For this let
\begin{equation*}
\bar \Gamma_h:= \pi_M(\Orth(M,h) \subset \Orth^+(\FF^{10}_2).
\end{equation*}

Since the natural map $\Orth^+(N) \to \Orth(D(N)) \cong \Orth^+(\FF_2^{10})$
is surjective Question \ref{qu:1} can be solved by giving an answer to
\begin{question}\label{qu_subgroup_enum}
How many subgroups (up to conjugacy) of the form $\bar \Gamma_h$ are there in $\Orth^+(\FF_2^{10})$?
\end{question}

\section{The computations}\label{sec:proof}
\subsection{Some basic facts and roots}\label{subsec:roots}

The lattice $M(-1/2) = U + E_8$ is known under different names. It is actually the root lattice of the hyperbolic Coxeter group $E_{10}$ (with $E_n$, $n\leq 8$ being the classical ones and $E_9$ being the affine extension of $E_8$). It is also the even Lorentzian lattice $II_{9,1}$. Another common name is $E_8^{++}$, see \cite{KMW} for more details.
We will use the following Gram matrix for $M(1/2)$:
\begin{equation}\label{equ:G}
G = \left(\begin{array}{cccccccccc}
0 & 1 & 0 & 0 & 0 & 0 & 0 & 0 & 0 & 0\\
1 & 0 & 0 & 0 & 0 & 0 & 0 & 0 & 0 & 0\\
0 & 0 & -2 & 0 & 0 & 0 & 0 & 0 & -1 & 0\\
0 & 0 & 0 & -2 & 1 & 0 & 1 & -1 & 0 & -1\\
0 & 0 & 0 & 1 & -2 & -1 & 0 & 0 & 0 & 1\\
0 & 0 & 0 & 0 & -1 & -2 & 0 & 0 & -1 & 1\\
0 & 0 & 0 & 1 & 0 & 0 & -2 & 1 & -1 & 1\\
0 & 0 & 0 & -1 & 0 & 0 & 1 & -2 & 0 & 0\\
0 & 0 & -1 & 0 & 0 & -1 & -1 & 0 & -2 & 1\\
0 & 0 & 0 & -1 & 1 & 1 & 1 & 0 & 1 & -2
\end{array}\right)
\end{equation}
We shall first collect some basic facts about this lattice.
It is known to be $2$-reflective, see \cite{Nik2}.
Hence Vinberg's algorithm \cite{Vin} can be employed to compute a fundamental domain $D$ of the Weyl group of the lattice.
Here we give a list of roots that define the facets of (a possible choice of) $D$, that is a list of simple roots.
The roots $r$ satisfy $r^2 = -2$, are numbered from $-1$ to $8$ and have the coordinates:
\begin{equation*}
\begin{array}{rcl}
-1 &=& (-1, 1, 0, 0, 0, 0, 0, 0, 0, 0)\\
0 &=& (0, -1, -1, 1, 1, -2, -2, -2, 2, -1)\\
1 &=& (0, 0, 0, 0, -1, 1, 1, 1, -1, 0)\\
2 &=& (0, 0, 0, 0, 1, -1, 0, 0, 0, 0)\\
3 &=& (0, 0, 0, 0, 0, 0, -1, 0, 1, 0)\\
4 &=& (0, 0, 1, 0, -1, 1, 1, 0, -2, -1)\\
5 &=& (0, 0, -1, -1, 0, -1, -1, 0, 2, 1)\\
6 &=& (0, 0, 0, 0, 0, 1, 0, 0, -1, 0)\\
7 &=& (0, 0, 1, 1, 0, 0, 1, 0, -1, 0)\\
8 &=& (0, 0, 0, 1, 1, 0, 1, 0, 0, 0).\\
\end{array}
\end{equation*}
The associated Coxeter-Dynkin diagram is
\begin{center}
\resizebox{6cm}{!}{\includegraphics{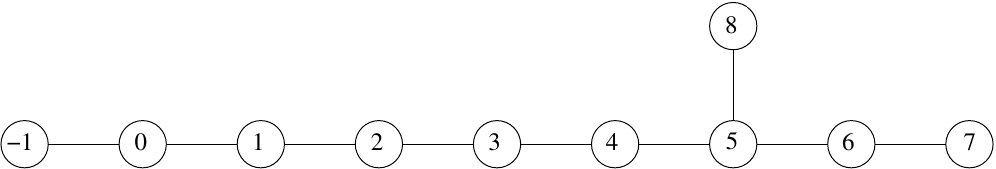}}\par
\end{center}

Since there are $10$ simple roots, it follows that the fundamental domain is simplicial.
The generators $g_i$ of the extreme rays are the following:
\begin{equation*}
\left(\begin{array}{cccccccccc}
0 & -1 & 0 & 0 & 0 & 0 & 0 & 0 & 0 & 0\\
-1 & -1 & 0 & 0 & 0 & 0 & 0 & 0 & 0 & 0\\
-2 & -2 & -2 & 1 & 1 & -3 & -3 & -2 & 3 & -2\\
-2 & -2 & -1 & 1 & 1 & -2 & -2 & -2 & 2 & -1\\
-3 & -3 & -2 & 2 & 1 & -4 & -4 & -3 & 3 & -3\\
-3 & -3 & -2 & 2 & 1 & -3 & -3 & -3 & 3 & -2\\
-4 & -4 & -3 & 3 & 2 & -6 & -5 & -4 & 5 & -4\\
-4 & -4 & -3 & 3 & 2 & -5 & -4 & -4 & 4 & -3\\
-5 & -5 & -4 & 4 & 3 & -7 & -6 & -5 & 6 & -4\\
-6 & -6 & -4 & 5 & 3 & -8 & -7 & -6 & 6 & -6
\end{array}\right)
\end{equation*}
A straightforward computation shows that these generators even define a $\ZZ$-basis of the lattice $U+E_8(-1)$ and hence the fundamental domain is in fact a basic cone.

The symmetrix matrix $W = g_i\cdot g_j$ with respect to the simple roots $g_i$, is easily computed to be
\begin{equation}\label{equ:W}
W =
\left(\begin{array}{cccccccccc}
0 & 1 & 2 & 2 & 3 & 3 & 4 & 4 & 5 & 6\\
1 & 2 & 4 & 4 & 6 & 6 & 8 & 8 & 10 & 12\\
2 & 4 & 4 & 6 & 7 & 8 & 9 & 10 & 12 & 14\\
2 & 4 & 6 & 6 & 9 & 9 & 12 & 12 & 15 & 18\\
3 & 6 & 7 & 9 & 10 & 12 & 14 & 15 & 18 & 21\\
3 & 6 & 8 & 9 & 12 & 12 & 16 & 16 & 20 & 24\\
4 & 8 & 9 & 12 & 14 & 16 & 18 & 20 & 24 & 28\\
4 & 8 & 10 & 12 & 15 & 16 & 20 & 20 & 25 & 30\\
5 & 10 & 12 & 15 & 18 & 20 & 24 & 25 & 30 & 36\\
6 & 12 & 14 & 18 & 21 & 24 & 28 & 30 & 36 & 42
\end{array}\right)
\end{equation}
From the above presentation one can also conclude that the Coxeter-Dynkin $E_{10}$ has only trivial symmetries.
It thus follows that the isometry group and the Coxeter group coincide:
\begin{equation}\label{eq:nosymmegtries}
\Orth(U + E_8(-1)) = W(U + E_8(-1)).
\end{equation}
We also mention that the Coxeter-Dynkin is simply laced, i.e. has no multiple edges (but we will not make use of this fact).

This information already allows us to give an answer to Question \ref{qu:2}:

\begin{theorem}\label{teo:degreemaximalgroup}
The minimal norm of integer vectors with trivial stabilizer in $U + E_8(-1)$ is $1240$ and in this degree there is a unique such vector.
\end{theorem}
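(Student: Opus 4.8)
The plan is to reduce the computation of the stabilizer $\Orth(U+E_8(-1),h)$ of a vector $h$ to a finite combinatorial problem inside the fundamental domain $D$ described above, and then to search systematically over vectors of bounded norm.

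First I would exploit the reflection group structure. By \eqref{eq:nosymmegtries} the isometry group $\Orth(U+E_8(-1))$ coincides with the Coxeter group $W$ generated by the ten simple reflections. Since $D$ is a (strict) fundamental domain for the $W$-action on the positive cone, every $W$-orbit of a vector $h$ with $h^2 > 0$ has a unique representative $h'$ lying in $\Bar D$ (the closed fundamental chamber), i.e. with $h' \cdot g_i \geq 0$ for all generators $g_i$ of the extreme rays (equivalently $\latt{h', r_i} \leq 0$ for the simple roots, up to sign conventions). For such a chamber representative, the stabilizer $\Orth(U+E_8(-1),h')$ is generated by exactly those simple reflections $s_i$ that fix $h'$, namely the $s_i$ with $\latt{h', r_i} = 0$ — this is the standard fact that the stabilizer of a point in a Coxeter chamber is the parabolic subgroup generated by the walls through that point. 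Hence $h'$ has trivial stabilizer if and only if $h'$ lies in the \emph{interior} of $D$, i.e. $h' \cdot g_i > 0$ strictly for every $i$.

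So the question becomes: what is the minimal norm of an interior lattice point of $D$? Using the $\ZZ$-basis $g_1,\dots,g_{10}$ of the extreme rays (which, as noted, is a basis of the lattice), an interior lattice point is exactly $\sum_i a_i g_i^\vee$ — more precisely, writing $h = \sum c_i g_i$ and pairing with the dual basis, interiority forces each coordinate of $h$ in the \emph{dual} basis to be a positive integer, hence $\geq 1$. The norm of such a vector is a positive-definite... no, indefinite quadratic form, but restricted to the cone it is bounded below on the slice, so one minimizes $h^2$ subject to the constraints $h \cdot g_i \geq 1$ for all $i$. The minimum is attained at (or near) the barycentric direction, and I would compute it by taking $h_0$ to be the vector with $h_0 \cdot g_i = 1$ for all $i$, i.e. $h_0 = \sum_i (W^{-1})_{ij} g_j$ where $W$ is the Gram matrix of the $g_i$ given in \eqref{equ:W}; then $h_0^2 = \mathbf{1}^\top W^{-1} \mathbf{1}$, the sum of all entries of $W^{-1}$. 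Computing $W^{-1}$ and this sum should yield $1240$, and one checks this $h_0$ is actually an integral vector (so it lies in the lattice) and that no integral interior point has smaller norm — the latter because decreasing any pairing $h\cdot g_i$ below $1$ pushes $h$ onto a wall, giving nontrivial stabilizer, while the constraint polytope $\{h : h\cdot g_i \geq 1\}$ has $h_0$ as its unique vertex of minimal norm by convexity of $h \mapsto h^2$ on the relevant region.

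The main obstacle, and the part requiring genuine care rather than a one-line appeal to generalities, is the uniqueness claim: showing that $h_0$ is the \emph{only} vector of norm $1240$ with trivial stabilizer. Up to the $\Orth$-action this amounts to showing $h_0$ is the unique interior lattice point of $D$ of minimal norm. I would argue that any interior lattice point $h$ satisfies $h \cdot g_i \geq 1$ for all $i$ with equality forced somewhere unless $h = h_0$, and then use strict convexity: if $h \neq h_0$ lies in the constraint region, the segment from $h_0$ toward $h$ initially decreases... actually one must be slightly careful since $h^2$ is only convex along directions staying in the forward cone, but $h_0$ and all admissible $h$ lie in the forward cone which is convex, so $t \mapsto (h_0 + t(h-h_0))^2$ is a convex quadratic, and one checks its derivative at $t=0$ is $2 h_0 \cdot (h - h_0) = 2(\sum_i (h\cdot g_i) - 10) \geq 0$ with equality iff $h \cdot g_i = 1$ for all $i$ iff $h = h_0$; hence $h^2 > h_0^2$ for $h \neq h_0$. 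This pins down both the minimal value and its uniqueness; the remaining work is the explicit inversion of $W$ and confirming $h_0$ is integral, which is a routine finite computation best left to the machine.
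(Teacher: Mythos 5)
Your opening paragraph is exactly the paper's reduction: by \eqref{eq:nosymmegtries} the isometry group is the Coxeter group, so after moving $h$ into the closed fundamental chamber its stabilizer is the parabolic subgroup generated by the reflections in the walls through $h$, and trivial stabilizer means $h$ lies in the open chamber. From there, however, you conflate the two dual descriptions of the chamber, and the error is fatal. The chamber is $D=\{\sum_i a_i g_i : a_i\ge 0\}$, equivalently the set where the pairings with the \emph{simple roots} $r_i$ have a fixed sign; its interior is where the coefficients $a_i=\pm\langle h,r_i\rangle$ are all strictly positive. Your condition ``$h\cdot g_i>0$ for all $i$'' describes a strictly larger cone (the dual cone of $D$; it contains $D$ because $W$ has non-negative entries, but it is not equal to $D$ since $W\neq\mathrm{Id}$). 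Minimizing over $\{h:h\cdot g_i\ge 1\}$ is therefore the wrong problem, and it visibly gives the wrong answer: the vertex $h_0$ with $h_0\cdot g_i=1$ for all $i$ is (up to sign) $\sum_i r_i$, the sum of the simple roots, so $h_0^2=\mathbf{1}^{T}W^{-1}\mathbf{1}=\sum_i r_i^2+2\sum_{i<j}r_i\cdot r_j=10\cdot(-2)+2\cdot 9=-2$, not $1240$; this vector is not even in the positive cone, let alone an interior point of $D$. The correct computation uses the other pairing: since the $g_i$ form a $\ZZ$-basis, an interior lattice point is $h=\sum_i a_i g_i$ with integers $a_i\ge 1$, and $h^2=a^{T}Wa$ is minimized at $a=\mathbf{1}$ with value $\mathbf{1}^{T}W\mathbf{1}=1240$ --- the sum of the entries of $W$, not of $W^{-1}$.

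The uniqueness argument is also broken. Even when $h$ and $h_0$ both lie in the forward cone, their difference $h-h_0$ may have negative norm, so $t\mapsto(h_0+t(h-h_0))^2$ is then a \emph{concave} quadratic, and a non-negative derivative at $t=0$ does not force $h^2\ge h_0^2$; convexity of $x\mapsto x^2$ fails for an indefinite form even along segments inside the positive cone. What actually makes the minimization work is the entrywise non-negativity of $W$: on the positive orthant the gradient of $a\mapsto a^{T}Wa$ is $2Wa\ge 0$, so the form is non-decreasing in each coordinate, and since $W_{11}$ is the only vanishing entry, raising any $a_k$ above $1$ strictly increases the value. This yields both the minimum $1240$ and the uniqueness of the minimizer $a=\mathbf{1}$ in one stroke, which is precisely the argument in the paper.
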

\begin{proof}
Since the Coxeter-Dynkin diagram has no symmetries, we have already concluded in (\ref{eq:nosymmegtries}) that the isometry group and
the Coxeter group of the lattice $U+E_8(-1)$ coincide. Hence a vector $h$ has trivial stabilizer if and only if it is in the interior
of the fundamental domain.
Since the $g_i$ form a $\ZZ$-basis of the lattice, it follows that
\begin{equation*}
v = \sum_{i=1}^{10} a_i g_i \, \mbox{for some } a_i \in \NN_{>0}.
\end{equation*}
It then follows from the form of $W$ in (\ref{equ:W}), notably the observation that all entries in the matrix $W$ are non-negative and only one entry is $0$, that the minimum value for $v^2$ is obtained if and only if all $a_i=1$.  We can then conclude, again from (\ref{equ:W}), that this minimum value is $v^2=1240$.
\end{proof}

\begin{remark}
The vector $h$ with norm $h^2=1240$ is characterized by the property that $(h,r)=1$ for every root $r$ defining a wall of the Weyl chamber. This is called the
{\em Weyl vector} in \cite[Chapter 27, \S2, Theorem 1]{CS}.
\end{remark}

We note that this fits very well with the results obtained by Knutsen in \cite[Proposition 1.5]{Kn} where a geometric construction of a divisor class $h_0$ was given such that $h_0^2$ is minimal and the
corresponding moduli space
dominates all moduli spaces of numerically polarized Enriques surfaces. The divisor found by Knutsen also satisfies $h_0^2=1240$ and we checked by computer that his polarization and the polarization $h$
from Theorem \ref{teo:degreemaximalgroup} are equivalent confirming that the corresponding modular varieties are the same.

\subsection{Enumerating polarizations of small degree}\label{subsec:enumspaces1}

Our next aim is to enumerate the number of inequivalent polarizations in a given degree (for small values of $d$).

\begin{theorem}\label{teo:smalldegreevectors}
The list of non-isotropic vectors of norm at most $30$ in the fundamental domain is given in Table \ref{ListVectorsNormAtMost30}.
\end{theorem}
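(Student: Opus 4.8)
The plan is to convert the statement into a finite, completely explicit enumeration of lattice points inside the fundamental domain $D$, exploiting the three facts recorded above: that $\Orth(U+E_8(-1)) = W(U+E_8(-1))$ by (\ref{eq:nosymmegtries}), that the extreme ray generators $g_1,\dots,g_{10}$ of $D$ form a $\ZZ$-basis of the lattice, and that the Gram matrix $W$ of these generators in (\ref{equ:W}) has only non-negative entries.

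The first step is to identify the set that must be enumerated. Since the lattice $U+E_8(-1)$ is $2$-reflective, its Weyl chambers of finite covolume tile hyperbolic space, so the positive cone is contained in the Tits cone; combined with $\Orth = W$ and the standard fact that a closed Coxeter chamber meets every orbit in exactly one point, this shows that each vector of positive norm is $\Orth(U+E_8(-1))$-equivalent to a unique vector of $D$. Because the $g_i$ form a $\ZZ$-basis and $D$ is the simplicial cone they span, a lattice vector $v$ lies in $D$ if and only if $v = \sum_{i=1}^{10} a_i g_i$ with all $a_i \in \NN$; writing $\bda = (a_1,\dots,a_{10})$ we then have $v^2 = \bda^{\,t} W \bda$. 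As the lattice is even and $D$ lies in the positive cone, the non-isotropic vectors of norm at most $30$ in $D$ correspond precisely to the tuples $\bda \in \NN^{10}\setminus\{0\}$ with $\bda^{\,t} W \bda \in \{2,4,\dots,30\}$.

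The second step is to bound this system. Every entry of $W$ is $\ge 0$, so $\bda^{\,t} W \bda = \sum_{i,j} W_{ij} a_i a_j$ is a sum of non-negative terms, each of which must therefore be $\le 30$. Applied to the diagonal entries $W_{ii} = 0,2,4,6,10,12,18,20,30,42$ this forces $a_{10} = 0$, $a_5,\dots,a_9 \le 1$, $a_3, a_4 \le 2$ and $a_2 \le 3$; applied to the entries of the first row of $W$ (all $\ge 1$, and $\ge 2$ in the columns $3,\dots,10$), together with the observation that $v = a_1 g_1$ is isotropic and hence excluded, it bounds $a_1 \le 15$. This confines $\bda$ to an explicit finite box. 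It then remains to run over the finitely many tuples in this box, compute $v = \sum a_i g_i$ and $v^2 = \bda^{\,t} W \bda$, and retain those with $v^2 \in \{2,4,\dots,30\}$ (recording in passing which $v$ are primitive, for use in the next subsection); this produces Table \ref{ListVectorsNormAtMost30}. Uniqueness of the $g_i$-coordinates of a point of $D$, together with the fact that $D$ meets every orbit exactly once, guarantees that the list contains each orbit exactly once and is complete.

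The computation itself is elementary and is carried out by computer; the only point that deserves emphasis is why it is finite at all, namely the conjunction of $\Orth = W$ (which replaces an a priori infinite lattice-orbit problem by a statement about a single chamber), ``the $g_i$ are a $\ZZ$-basis'' (which turns membership in $D$ into non-negativity of coordinates), and ``$W$ is entrywise non-negative'' (which makes the bound $v^2 \le 30$ instantly bound those coordinates). It is a fortunate feature of this particular basis that the indefinite bounded-norm problem inside the cone collapses to the trivial non-negative search just described.
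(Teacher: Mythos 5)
Your proposal is correct and follows essentially the same route as the paper: both reduce the statement to a finite search over tuples $\bda\in\NN^{10}$ using that the $g_i$ form a $\ZZ$-basis spanning the fundamental cone and that the Gram matrix $W$ in (\ref{equ:W}) is entrywise non-negative with positive diagonal except for the isotropic generator. The only (immaterial) difference is how the coefficient of $g_1$ is bounded — you use the positive off-diagonal entries of the first row of $W$ directly, while the paper writes $t=\beta g_1+w$ and invokes $g_1\cdot w>0$ for $w\neq 0$; these are the same observation.
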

\begin{proof}
The matrix of scalar products $(g_i\cdot g_j)$ is positive except for the isotropic vector. We can enumerate the vectors $w$ of the form
\begin{equation*}
\sum_{k=2}^{10} \alpha_k g_k\, \mbox{ for } \alpha_k \in \NN
\end{equation*}
with $w\cdot w \leq 30$. Since $g_k\cdot g_k > 0$
for $k\geq 2$ we have a finite set of possible solutions.
For such a $w$ we consider the vectors $t = \beta g_1 + w$ for $\beta \in \NN$.
Since we want to find non-isotropic vectors we have $w\not=0$.
We have $t\cdot t = 2 \beta g_1 \cdot w + w\cdot w$. Since $w\not=0$ we also have  $g_1\cdot w > 0$ and thus a finite number of possibilities to consider.
\end{proof}
\begin{remark}
We postpone the table to Subsection \ref{subsec:enumspaces2} because we will then also add the information about which polarizations will lead to the same modular varieties.
\end{remark}

We note that there are $2$ different polarizations in degree $4$. The first, is given by $h=g_1 + g_2$, the second by $h=g_3$.
Another representation of the first polarization is $h=e+2f$ where $e,f$ are a standard
basis of the hyperbolic plane $U$, i.e. $e^2=f^2=0$ and $e.f=1$. Indeed, if one sets $e = g_2-g_1$ and $f = g_1$
then one gets that $e,f$ define a hyperbolic plane.
This leads to a polarization with base-points (since it has degree $1$ on an elliptic curve).
The second polarization is the one treated by Casnati. Similarly, there are two polarizations in degree $6$, one corresponding to  $h=2g_1+g_2$ or,
alternatively, $h=e+3f$. This is again not base-point free, the other polarization is $h= g_4$ and leads to sextic surfaces in $\PP^3$.
In general, there are the non base-point free polarizations $h = k g_1 + g_2$, or equivalently $h=e+(k+1)f$ of degree $2k+2$.
We shall see later that all polarizations $h = k g_1 + g_2$ lead to equivalent
subgroups $\Gamma_h^+$ and thus to isomorphic modular varieties.
We note that in the (classical) literature non base-point free polarizations are sometimes excluded.
We will return to the connection with the classical cases in more detail in Subsection \ref{subsec:classical}.

\subsection{Enumerating moduli spaces}\label{subsec:enumspaces2}

We will now start enumerating the conjugacy classes of the groups $\Gamma_h^+$. By Section \ref{sec:construction} this is equivalent to enumerating all conjugacy classes of the groups
$\bar \Gamma_h \subset \Orth^+(\FF_2^{10})$.
We shall give detailed information on the groups in Table \ref{table_subgroups1} and Table \ref{table_subgroups2}.

\begin{theorem}\label{teo:numbersubgroupsGammah}\label{teo:87groups}
There are $87$ conjugacy classes of subgroups of the form $\Gamma_h^+$.
\end{theorem}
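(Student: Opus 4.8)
The plan is to reduce the statement to a purely finite‑group computation inside $\Orth^+(\FF_2^{10})$ and then carry out that computation algorithmically. By the discussion in Section~\ref{sec:construction}, in particular (\ref{equ:modulargroup}) and the surjectivity of $\Orth^+(N)\to\Orth(D(N))\cong\Orth^+(\FF_2^{10})$, two groups $\Gamma^+_{h}$ and $\Gamma^+_{h'}$ are conjugate in $\Orth^+(N)$ if and only if the finite images $\bar\Gamma_{h}=\pi_M(\Orth(M,h))$ and $\bar\Gamma_{h'}$ are conjugate in $\Orth^+(\FF_2^{10})$. So it suffices to enumerate, up to conjugacy, the subgroups of $\Orth^+(\FF_2^{10})$ of the form $\bar\Gamma_h$ as $h$ ranges over all primitive vectors of positive degree in $M(1/2)=U+E_8(-1)$ (Question~\ref{qu_subgroup_enum}). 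The first step is therefore to show that only finitely many primitive vectors $h$ need to be tested and to produce an explicit finite list of orbit representatives. Using the fundamental domain $D$ computed by Vinberg's algorithm in Subsection~\ref{subsec:roots} together with the fact (\ref{eq:nosymmegtries}) that $\Orth(U+E_8(-1))$ equals the Coxeter group, every $\Orth(M)$‑orbit of primitive vectors of positive norm has a unique representative in the closed fundamental cone, expressible as $h=\sum a_i g_i$ with $a_i\in\NN$; combined with the positivity of the matrix $W$ in (\ref{equ:W}) (only one zero entry, all others $\geq 1$), the norm $h^2=\sum_{i,j}a_i a_j W_{ij}$ grows at least linearly in $\sum a_i$, so for each bound on the degree there are only finitely many such $h$, enumerable as in the proof of Theorem~\ref{teo:smalldegreevectors}.

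The second step is to show that the conjugacy class of $\bar\Gamma_h$ stabilizes once the degree is large enough, so that a finite search actually captures all classes. Here I would invoke \cite[Theorem~1.1]{GH3}: there are only finitely many isomorphism classes of moduli spaces $\cM^a_{\En,h}$, hence only finitely many conjugacy classes of groups $\Gamma^+_h$ in total. An effective bound is needed to know when to stop; the natural one is the Weyl vector of degree $1240$ from Theorem~\ref{teo:degreemaximalgroup}, whose stabilizer is trivial, together with the monotonicity phenomenon that "generic" vectors in a given chamber stratum give the minimal group $\widetilde\Orth^+(N)$. Concretely I would argue that once $h$ lies deep enough in the interior of a face of the fundamental domain the group $\Orth(M,h)$ — and hence $\bar\Gamma_h$ — depends only on which walls $h$ is orthogonal to, which is one of finitely many possibilities; combining this stratification with the explicit low‑degree enumeration gives a provably complete list.

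The third step is computational: for each representative $h$ in the finite list, compute the stabilizer $\Orth(M,h)$ as an arithmetic subgroup of $\Orth(M)$ (using the algorithms for automorphism groups of indefinite quadratic forms referred to in the abstract and Section~\ref{sec:proof}), map it into $\Orth(D(M))\cong\Orth^+(\FF_2^{10})$ to obtain $\bar\Gamma_h$, and then sort the resulting list of subgroups of the finite group $\Orth^+(\FF_2^{10})$ (of order $2^{21}\cdot 3^5\cdot 5^2\cdot 7\cdot 17\cdot 31$) up to conjugacy, e.g. by comparing invariants (order, isomorphism type, permutation character on a natural action) and then testing conjugacy directly. Running this to the bound yields exactly $87$ distinct conjugacy classes, which are recorded in Tables~\ref{table_subgroups1} and~\ref{table_subgroups2}.

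The main obstacle is the second step — proving that the finite search is \emph{exhaustive} rather than merely producing $87$ classes that happen to occur. Knowing there are finitely many classes (from \cite{GH3}) is not the same as knowing a computable stopping degree; one must genuinely show that every primitive $h$ of degree exceeding the search bound has $\bar\Gamma_h$ conjugate to one already found. The cleanest route is the face‑stratification argument above: the stabilizer $\Orth(M,h)$ is constant along the relative interior of each face of the fundamental chamber, there are finitely many faces, and representatives of small norm already realize every face — so the search bound can be taken to be the largest norm among those representatives (comfortably below $1240$). Verifying that the small‑norm vectors already meet every relevant face, and that $\bar\Gamma_h$ really is a face‑invariant, is the delicate part and is where the bulk of the argument, as opposed to the machine computation, resides.
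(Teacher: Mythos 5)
Your overall strategy matches the paper's, and the step you single out at the end as the ``delicate part'' --- that $\Orth(M,h)$, and hence $\bar\Gamma_h$, depends only on the face of the fundamental chamber whose relative interior contains $h$ --- is precisely the paper's key lemma. But it is not delicate: it is the standard fact that the stabilizer in a Coxeter group of a point of the closed fundamental chamber is the parabolic subgroup generated by the reflections in the walls containing that point (see \cite[Theorem 4.8 and Theorem 1.12(c)]{Hum}), applicable here because $\Orth(U+E_8(-1))$ coincides with its Coxeter group by (\ref{eq:nosymmegtries}). Once you have this, the entire degree-bound apparatus of your second step --- the finiteness result of \cite{GH3}, the Weyl vector of norm $1240$, a computable ``stopping degree'' for the search --- is unnecessary and is not how the paper proceeds: rather than enumerating vectors $h$ of bounded norm and arguing that the list of groups saturates, one enumerates the faces directly. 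There are $10$ simple roots, hence $2^{10}$ subsets; discarding the full set (no $h$ of positive norm is orthogonal to all roots, since they span the lattice) and the subset corresponding to the isotropic extreme ray $\RR_{\geq 0}\,g_1$ leaves $1022$ candidate parabolic subgroups, each actually realized by the primitive positive-norm vector $\sum_{i\in S}g_i$ since the $g_i$ form a $\ZZ$-basis; passing to the images in $\Orth^+(\FF_2^{10})$ and testing conjugacy with GAP reduces $1022$ to $87$. Your concern about whether ``small-norm vectors already meet every relevant face'' therefore evaporates: exhaustiveness is automatic because the enumeration is over faces, not vectors, and the only computational content is the conjugacy reduction in the finite group.
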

\begin{proof}
Let  $H = \{ \sum_{i=1}^{10} \alpha_i g_i \mid \alpha_i \in \RR_{\geq 0} \}$ be our chosen fundamental domain of the group $O(U+E_8(-1))$.
The crucial fact which we use is the following:
the stabilizer of a point $x$ in $H$ is generated by the reflections corresponding to the facets of $H$ in which $x$ is contained.
For a proof, see \cite[Theorem 4.8]{Hum}, which in turn refers to \cite[Theorem 1.12.c]{Hum}.
Hence a group $\Gamma_h^+$ is determined by the set of roots to which $h$ is orthogonal to.
There are exactly $10$ roots for the fundamental domain. We note that the isotropic vector $g_1$ cannot represent a polarization. Further, $h$ cannot be orthogonal to all roots (as these span the lattice).
This leaves us with $2^{10} - 1 - 1$ possibilities.

All remaining sets give us potential subgroups $\Gamma_h^+$. We shall now work with the groups $\bar \Gamma_h$, which makes this a finite problem.
These groups can be represented as a permutation group acting on $2^{10}$
elements. By using \cite{gap} we can check when two subgroups are conjugate and thus reduce from
$1022$ to $87$ subgroups. In order to do this practically, one needs to compute suitable invariants.
The level $1$ invariants are the order and the size of the orbits of these groups.
For groups with less than $1000$ elements, we compute all their subgroups and their associated level $1$ invariants.
This gets us a more powerful invariant that would not be possible to compute for the larger groups of this
enumeration.
\end{proof}

In Tables \ref{table_subgroups1} and \ref{table_subgroups2} we provide detailed information on the groups $\bar \Gamma_h$ (and thus equivalently for  $\Gamma_h^+$). For this we use our description that the
group $\Gamma_h^+$ is completely determined by the facets of the fundamental domain containing $h$. This allows us to describe these groups in terms of
admissible subsets of the Dynkin diagram, i.e. subsets which are neither the set of all roots nor consist of only the isotropic vector.
If the number of generating elements is greater than $5$, then we take  the complement of the subset of the Dynkin diagram and indicate this by a line over the set given in the table.
We then give the number of subsets defining the same conjugacy class of subgroups.
The next columns gives the order of $\bar \Gamma_h$ and
we then provide the number of orbits of isotropic vectors and planes in the lattice $N$ with respect to the group $\Gamma_h^+$ (we will return to the latter in more detail in
Subsection \ref{subsec:isotropic}).
Our computations also show that for each group $\Gamma_h$ there is a unique orbit of a vector $h_{\min}$ with $h_{\min}^2$ minimal and $\Gamma_h=\Gamma_{h_{\min}}$ (up to conjugation).
In the last column we provide the $\phi$-invariant of the vector $h_{\min}$ representing the group $\Gamma_h^+$.

Note that this information immediately gives the degree of the maps
\begin{equation*}
\widetilde {\cM}_{\En} \to \cM_{\En,h} \to {\cM}_{\En}.
\end{equation*}
The first is the order of $\bar \Gamma_h$, the latter the index $[\Orth^+(\FF_2^{10}) : \bar \Gamma_h]$, where we recall that
$\left\vert \Orth^+(\FF_2^{10}) \right\vert = 2^{21} \cdot 3^5 \cdot 5^2 \cdot 7\cdot 17 \cdot 31$.

\begin{table}
\begin{center}
{\scriptsize
\begin{tabular}{|c|c|c|c|c|c|c|c|c|}
\hline
Nr & $S$ & $\# S$ &  $|\bar \Gamma_h|$  & $\# I_1$  & $\# I_2$ & $\# I_{12}$ & $\min \deg$ & $\phi(h_{min})$\\
\hline
1 & $\overline{\{0\}}$ & 1 & $2^{14} \cdot 3^{5} \cdot 5^{2} \cdot 7$ & 5 & 9 & 18 & 2 & 1\\
2 & $\overline{\{-1,0\}}$ & 1 & $2^{13} \cdot 3^{5} \cdot 5^{2} \cdot 7$ & 7 & 13 & 28 & 4 & 1\\
3 & $\overline{\{7\}}$ & 1 & $2^{15} \cdot 3^{4} \cdot 5 \cdot 7$ & 5 & 10 & 19 & 4 & 2\\
4 & $\overline{\{1\}}$ & 1 & $2^{11} \cdot 3^{5} \cdot 5 \cdot 7$ & 6 & 14 & 28 & 6 & 2\\
5 & $\overline{\{-1,7\}}$ & 1 & $2^{14} \cdot 3^{2} \cdot 5 \cdot 7$ & 9 & 23 & 49 & 8 & 2\\
6 & $\overline{\{0,1\}}$ & 2 & $2^{11} \cdot 3^{4} \cdot 5 \cdot 7$ & 9 & 23 & 51 & 10 & 2\\
7 & $\overline{\{8\}}$ & 1 & $2^{8} \cdot 3^{4} \cdot 5^{2} \cdot 7$ & 6 & 13 & 28 & 10 & 3\\
8 & $\overline{\{2\}}$ & 1 & $2^{10} \cdot 3^{5} \cdot 5$ & 8 & 22 & 47 & 12 & 3\\
9 & $\overline{\{0,7\}}$ & 1 & $2^{11} \cdot 3^{2} \cdot 5 \cdot 7$ & 11 & 34 & 77 & 14 & 3\\
10 & $\overline{\{7,8\}}$ & 3 & $2^{7} \cdot 3^{4} \cdot 5 \cdot 7$ & 11 & 31 & 74 & 16 & 3\\
11 & $\overline{\{6\}}$ & 1 & $2^{8} \cdot 3^{4} \cdot 5 \cdot 7$ & 8 & 22 & 49 & 18 & 4\\
12 & $\overline{\{1,2\}}$ & 2 & $2^{8} \cdot 3^{5} \cdot 5$ & 12 & 38 & 89 & 18 & 3\\
13 & $\overline{\{-1,0,7\}}$ & 1 & $2^{10} \cdot 3^{2} \cdot 5 \cdot 7$ & 15 & 48 & 116 & 20 & 3\\
14 & $\overline{\{3\}}$ & 1 & $2^{10} \cdot 3^{2} \cdot 5^{2}$ & 10 & 34 & 76 & 20 & 4\\
15 & $\overline{\{-1,0,1\}}$ & 1 & $2^{10} \cdot 3^{4} \cdot 5 \cdot 7$ & 12 & 32 & 76 & 22 & 3\\
16 & $\overline{\{1,7\}}$ & 1 & $2^{10} \cdot 3^{3} \cdot 5$ & 13 & 49 & 113 & 22 & 4\\
17 & $\overline{\{0,8\}}$ & 3 & $2^{8} \cdot 3^{2} \cdot 5 \cdot 7$ & 14 & 51 & 125 & 24 & 4\\
18 & $\overline{\{0,2\}}$ & 1 & $2^{9} \cdot 3^{4} \cdot 5$ & 13 & 46 & 108 & 26 & 4\\
19 & $\overline{\{2,7\}}$ & 3 & $2^{10} \cdot 3^{2} \cdot 5$ & 16 & 67 & 162 & 28 & 4\\
20 & $\overline{\{4\}}$ & 1 & $2^{7} \cdot 3^{3} \cdot 5^{2}$ & 11 & 41 & 96 & 30 & 5\\
21 & $\overline{\{0,1,7\}}$ & 2 & $2^{10} \cdot 3^{2} \cdot 5$ & 19 & 83 & 204 & 30 & 4\\
22 & $\overline{\{-1,0,8\}}$ & 4 & $2^{7} \cdot 3^{2} \cdot 5 \cdot 7$ & 19 & 74 & 191 & 32 & 4\\
23 & $\overline{\{0,1,2\}}$ & 3 & $2^{8} \cdot 3^{4} \cdot 5$ & 17 & 64 & 159 & 34 & 4\\
24 & $\overline{\{5,8\}}$ & 2 & $2^{5} \cdot 3^{3} \cdot 5 \cdot 7$ & 16 & 68 & 171 & 34 & 5\\
25 & $\overline{\{5,7\}}$ & 3 & $2^{6} \cdot 3^{2} \cdot 5 \cdot 7$ & 18 & 85 & 213 & 36 & 5\\
26 & $\overline{\{0,3\}}$ & 2 & $2^{9} \cdot 3^{2} \cdot 5$ & 18 & 87 & 213 & 38 & 5\\
27 & $\overline{\{3,8\}}$ & 3 & $2^{6} \cdot 3^{2} \cdot 5^{2}$ & 19 & 91 & 233 & 40 & 5\\
28 & $\overline{\{5\}}$ & 1 & $2^{6} \cdot 3^{3} \cdot 5 \cdot 7$ & 12 & 49 & 115 & 42 & 6\\
29 & $\overline{\{1,2,7\}}$ & 5 & $2^{8} \cdot 3^{2} \cdot 5$ & 24 & 124 & 320 & 42 & 5\\
30 & $\overline{\{3,7\}}$ & 1 & $2^{9} \cdot 3^{2} \cdot 5$ & 18 & 85 & 208 & 44 & 6\\
31 & $\overline{\{0,1,8\}}$ & 9 & $2^{5} \cdot 3^{2} \cdot 5 \cdot 7$ & 24 & 122 & 322 & 44 & 5\\
32 & $\overline{\{2,8\}}$ & 3 & $2^{7} \cdot 3^{3} \cdot 5$ & 18 & 85 & 215 & 46 & 6\\
33 & $\overline{\{-1,5\}}$ & 4 & $2^{6} \cdot 3^{3} \cdot 5$ & 21 & 115 & 294 & 48 & 6\\
34 & $\overline{\{-1,0,1,7\}}$ & 1 & $2^{9} \cdot 3^{2} \cdot 5$ & 25 & 118 & 305 & 50 & 5\\
35 & $\overline{\{2,6\}}$ & 4 & $2^{7} \cdot 3^{2} \cdot 5$ & 23 & 136 & 351 & 52 & 6\\
36 & $\overline{\{0,2,7\}}$ & 4 & $2^{9} \cdot 3 \cdot 5$ & 26 & 151 & 388 & 54 & 6\\
37 & $\overline{\{2,3,7\}}$ & 2 & $2^{9} \cdot 3^{2}$ & 29 & 179 & 466 & 56 & 6\\
38 & $\overline{\{1,2,8\}}$ & 8 & $2^{5} \cdot 3^{3} \cdot 5$ & 28 & 167 & 448 & 58 & 6\\
39 & $\overline{\{0,2,8\}}$ & 9 & $2^{6} \cdot 3^{2} \cdot 5$ & 31 & 207 & 553 & 60 & 6\\
40 & $\overline{\{-1,0,1,8\}}$ & 5 & $2^{4} \cdot 3^{2} \cdot 5 \cdot 7$ & 32 & 181 & 495 & 62 & 6\\
41 & $\overline{\{2,7,8\}}$ & 13 & $2^{6} \cdot 3^{2} \cdot 5$ & 31 & 200 & 540 & 64 & 6\\
\hline
\end{tabular}
}
\caption{The group $\bar \Gamma_h$. For each group we give one representative as a subset $S$ of the diagram. If the number of generating elements is greater than $5$, then we take the complement
and denote this by $\overline S$.
  We then give the number $\# S$ of subsets leading to the same group. 
  The next column is  the order of $\bar \Gamma_h$.
  The next three columns give the numbers $\#  I_1$, $\#  I_2$ and $\# I_{12}$ of orbits of isotropic lines, planes and flags in $N$.
  The penultimate column gives the degree of the (unique) smallest realization $h_{\min}$ having this group and the last column gives $\phi(h_{\min})$ (Part 1).}\label{table_subgroups1}
\end{center}
\end{table}

\begin{table}
\begin{center}
{\scriptsize
\begin{tabular}{|c|c|c|c|c|c|c|c|c|}
\hline
Nr & $S$ & $\# S$ &  $|\bar \Gamma_h|$  & $\# I_1$  & $\# I_2$ & $\# I_{12}$ & $\min \deg$ & $\phi(h_{min})$\\
\hline
42 & $\overline{\{1,4\}}$ & 1 & $2^{5} \cdot 3^{3} \cdot 5$ & 24 & 149 & 389 & 66 & 7\\
43 & $\overline{\{0,1,2,7\}}$ & 7 & $2^{8} \cdot 3 \cdot 5$ & 34 & 219 & 585 & 66 & 6\\
44 & $\overline{\{0,5\}}$ & 2 & $2^{6} \cdot 3^{2} \cdot 5$ & 27 & 187 & 487 & 68 & 7\\
45 & $\overline{\{-1,0,1,2\}}$ & 1 & $2^{7} \cdot 3^{4} \cdot 5$ & 22 & 92 & 238 & 70 & 6\\
46 & $\overline{\{0,3,7\}}$ & 2 & $2^{8} \cdot 3^{2}$ & 33 & 239 & 629 & 70 & 7\\
47 & $\overline{\{0,5,8\}}$ & 19 & $2^{5} \cdot 3^{2} \cdot 5$ & 36 & 274 & 746 & 76 & 7\\
48 & $\overline{\{0,1,2,8\}}$ & 17 & $2^{5} \cdot 3^{2} \cdot 5$ & 41 & 307 & 849 & 78 & 7\\
49 & $\overline{\{1,5\}}$ & 2 & $2^{6} \cdot 3^{3}$ & 30 & 231 & 608 & 84 & 8\\
50 & $\overline{\{1,2,3,7\}}$ & 3 & $2^{7} \cdot 3^{2}$ & 44 & 354 & 967 & 84 & 7\\
51 & $\overline{\{2,6,8\}}$ & 7 & $2^{7} \cdot 3^{2}$ & 38 & 306 & 832 & 88 & 8\\
52 & $\overline{\{0,5,7\}}$ & 6 & $2^{6} \cdot 3 \cdot 5$ & 40 & 342 & 927 & 92 & 8\\
53 & $\overline{\{1,2,7,8\}}$ & 20 & $2^{4} \cdot 3^{2} \cdot 5$ & 48 & 413 & 1159 & 96 & 8\\
54 & $\overline{\{0,1,5\}}$ & 14 & $2^{6} \cdot 3^{2}$ & 45 & 429 & 1175 & 100 & 8\\
55 & $\overline{\{0,1,3,7\}}$ & 3 & $2^{8} \cdot 3$ & 48 & 435 & 1184 & 102 & 8\\
56 & $\overline{\{2,3,7,8\}}$ & 11 & $2^{6} \cdot 3^{2}$ & 51 & 463 & 1298 & 104 & 8\\
57 & $\overline{\{1,5,8\}}$ & 4 & $2^{5} \cdot 3^{3}$ & 40 & 341 & 937 & 106 & 9\\
58 & $\overline{\{0,5,7,8\}}$ & 30 & $2^{5} \cdot 3 \cdot 5$ & 53 & 512 & 1433 & 108 & 8\\
59 & $\overline{\{-1,0,1,2,8\}}$ & 6 & $2^{4} \cdot 3^{2} \cdot 5$ & 54 & 465 & 1315 & 110 & 8\\
60 & $\overline{\{-1,0,1,2,7\}}$ & 2 & $2^{7} \cdot 3 \cdot 5$ & 44 & 325 & 890 & 114 & 8\\
61 & $\overline{\{1,2,5\}}$ & 4 & $2^{4} \cdot 3^{3}$ & 48 & 484 & 1339 & 120 & 9\\
62 & $\overline{\{0,1,5,8\}}$ & 46 & $2^{5} \cdot 3^{2}$ & 60 & 649 & 1832 & 124 & 9\\
63 & $\overline{\{0,1,2,7,8\}}$ & 32 & $2^{4} \cdot 3 \cdot 5$ & 70 & 782 & 2235 & 132 & 9\\
64 & $\overline{\{0,1,2,3,7\}}$ & 4 & $2^{7} \cdot 3$ & 63 & 653 & 1825 & 138 & 9\\
65 & $\overline{\{0,2,5\}}$ & 3 & $2^{5} \cdot 3^{2}$ & 54 & 611 & 1685 & 140 & 10\\
66 & $\overline{\{0,1,5,7\}}$ & 20 & $2^{6} \cdot 3$ & 67 & 814 & 2291 & 148 & 10\\
67 & $\overline{\{1,2,3,7,8\}}$ & 30 & $2^{4} \cdot 3^{2}$ & 80 & 1002 & 2884 & 156 & 10\\
68 & $\overline{\{0,1,2,5\}}$ & 30 & $2^{4} \cdot 3^{2}$ & 72 & 929 & 2637 & 160 & 10\\
69 & $\overline{\{0,1,5,7,8\}}$ & 57 & $2^{5} \cdot 3$ & 89 & 1252 & 3599 & 180 & 11\\
70 & $\overline{\{1,2,5,8\}}$ & 5 & $2^{3} \cdot 3^{3}$ & 64 & 737 & 2097 & 184 & 12\\
71 & $\overline{\{0,2,5,7\}}$ & 11 & $2^{5} \cdot 3$ & 81 & 1174 & 3323 & 196 & 12\\
72 & $\{4,6,3,5\}$ & 8 & $2^{3} \cdot 3 \cdot 5$ & 92 & 1210 & 3503 & 198 & 11\\
73 & $\overline{\{0,1,2,5,8\}}$ & 48 & $2^{3} \cdot 3^{2}$ & 96 & 1440 & 4166 & 208 & 12\\
74 & $\overline{\{0,1,2,5,7\}}$ & 64 & $2^{4} \cdot 3$ & 108 & 1818 & 5251 & 220 & 12\\
75 & $\{4,6,-1,5\}$ & 44 & $2^{4} \cdot 3$ & 118 & 1953 & 5690 & 228 & 12\\
76 & $\{4,6,8,5\}$ & 1 & $2^{6} \cdot 3$ & 82 & 996 & 2829 & 234 & 12\\
77 & $\{4,6,3,7\}$ & 19 & $2^{2} \cdot 3^{2}$ & 128 & 2263 & 6625 & 260 & 13\\
78 & $\overline{\{0,1,3,5,7\}}$ & 9 & $2^{5}$ & 122 & 2306 & 6647 & 280 & 14\\
79 & $\{4,6,-1,3\}$ & 99 & $2^{3} \cdot 3$ & 144 & 2856 & 8357 & 292 & 14\\
80 & $\{4,6,-1,2\}$ & 39 & $2^{4}$ & 163 & 3626 & 10599 & 340 & 15\\
81 & $\{4,6,5\}$ & 9 & $2^{3} \cdot 3$ & 156 & 3074 & 9031 & 342 & 15\\
82 & $\{4,6,3\}$ & 54 & $2^{2} \cdot 3$ & 192 & 4532 & 13369 & 380 & 16\\
83 & $\{4,6,-1\}$ & 57 & $2^{3}$ & 218 & 5766 & 17003 & 460 & 18\\
84 & $\{4,3\}$ & 9 & $2 \cdot 3$ & 256 & 7242 & 21471 & 532 & 19\\
85 & $\{4,6\}$ & 36 & $2^{2}$ & 292 & 9246 & 27411 & 580 & 20\\
86 & $\{4\}$ & 10 & $2$ & 392 & 14926 & 44387 & 820 & 24\\
87 & $\emptyset$ & 1 & $1$ & 528 & 24242 & 72199 & 1240 & 30\\
\hline
\end{tabular}
}
\caption{Same as Table \ref{table_subgroups1} (Part 2)}\label{table_subgroups2}
\end{center}
\end{table}

\newpage

\subsection{Degree of the polarization and number of moduli spaces}\label{subsec:comparisondegree}

As the degree of the polarization increases, the number of inequivalent polarizations will also grow. At the same time, the number of conjugacy classes of groups $\bar \Gamma_h$, and hence of
modular varieties $\cM_{\En,h}$ is limited by $87$. This means that inequivalent polarizations must give rise to isomorphic modular varieties. We will now discuss this in more detail.

In Table \ref{ListVectorsNormAtMost30} we list a representative for each polarization class in given low degree. We also enumerate the different conjugacy classes of the groups $\bar \Gamma_h$.
The entries $2 : \{g_{1}+g_{2}\}$ and $2 : \{2g_{1}+g_{2}\}$, for example, mean that the degree $4$ polarization $g_{1}+g_{2}$ and the degree $6$ polarization $2g_{1}+g_{2}$ define conjugate
subgroups $\bar \Gamma_h$.

Table \ref{ListNumberPolarizationsModuli} gives the following information for each degree $2d=2, \ldots, 72$ and
corresponding genus $g=2, \ldots, 37$: the first line shows the number $\# h$ of orbits of primitive vectors in a given degree.
We note that these numbers agree exactly with the corresponding list in \cite[Appendix]{CDGK}.
The second line $\# \bar \Gamma_h$ gives the number of conjugacy classes of groups $\bar \Gamma_h$ for given degree. We note that  $\# \bar \Gamma_h \leq \# h$ and that strict inequality will occur when different
orbits of primitive vectors $h$ give rise to conjugate subgroups $\bar \Gamma_h$.
This phenomenon first appears in degree $12$ where the stabilizer groups of the two polarizations $5g_1 + g_2$ and $g_1 + 2g_2$ actually agree.
As the degree grows, the number of orbits $\# h$ of will grow much faster than $\# \bar \Gamma_h$.
We note that the numbers in this list agree with those given in  \cite[Corollary 5.6]{GH3}.

In the next two lines we compare how in degree at most $2d$ the number of orbits and the number of conjugacy classes increase.
We see that we have found $312$ different classes of polarizations and $46$ different conjugacy classes of  groups $\bar \Gamma_h$ in degree $\leq 72$.
The number of subgroups will finally stabilize to $87$, by
Theorem \ref{teo:numbersubgroupsGammah}. This happens in degree $1240$,
 which is a lower limit by Theorem \ref{teo:degreemaximalgroup}.
 The above lists can easily be extended to higher degree (genus) using the programs we have.
The norm $2d = 72$ is the first one for which there is no new group occurring.

\begin{table}
\begin{center}

\begin{tabular}{|c|c|}
\hline
$\deg$ & polarizations \\
\hline
2 & 1 : $\{g_{2}\}$.\\
\hline
4 & 2 : $\{g_{1}+g_{2}\}$, 3 : $\{g_{3}\}$.\\
\hline
6 & 2 : $\{2g_{1}+g_{2}\}$, 4 : $\{g_{4}\}$.\\
\hline
8 & 2 : $\{3g_{1}+g_{2}\}$, 5 : $\{g_{1}+g_{3}\}$.\\
\hline
10 & 2 : $\{4g_{1}+g_{2}\}$, 6 : $\{g_{1}+g_{4}\}$, 7 : $\{g_{5}\}$.\\
\hline
12 & 2 : $\{5g_{1}+g_{2}$, $g_{1}+2g_{2}\}$, 5 : $\{2g_{1}+g_{3}\}$, 8 : $\{g_{6}\}$.\\
\hline
14 & 2 : $\{6g_{1}+g_{2}\}$, 6 : $\{2g_{1}+g_{4}\}$, 9 : $\{g_{2}+g_{3}\}$.\\
\hline
16 & 2 : $\{7g_{1}+g_{2}\}$, 5 : $\{3g_{1}+g_{3}\}$, 6 : $\{g_{2}+g_{4}\}$, 10 : $\{g_{1}+g_{5}\}$.\\
\hline
18 & 2 : $\{8g_{1}+g_{2}\}$, 6 : $\{3g_{1}+g_{4}\}$, 11 : $\{g_{7}\}$, 12 : $\{g_{1}+g_{6}\}$.\\
\hline
20 & 2 : $\{9g_{1}+g_{2}$, $3g_{1}+2g_{2}\}$, 5 : $\{4g_{1}+g_{3}\}$, 13 : $\{g_{1}+g_{2}+g_{3}\}$, \\
    & 14 : $\{g_{8}\}$.\\
\hline
22 & 2 : $\{10g_{1}+g_{2}\}$, 6 : $\{4g_{1}+g_{4}\}$, 10 : $\{2g_{1}+g_{5}\}$, 15 : $\{g_{1}+g_{2}+g_{4}\}$, \\
    & 16 : $\{g_{3}+g_{4}\}$.\\
\hline
24 & 2 : $\{11g_{1}+g_{2}$, $g_{1}+3g_{2}\}$, 5 : $\{5g_{1}+g_{3}$, $g_{1}+2g_{3}\}$, \\
    & 12 : $\{2g_{1}+g_{6}\}$, 17 : $\{g_{2}+g_{5}\}$.\\
\hline
26 & 2 : $\{12g_{1}+g_{2}\}$, 6 : $\{5g_{1}+g_{4}\}$, 13 : $\{2g_{1}+g_{2}+g_{3}\}$, \\
    & 17 : $\{g_{1}+g_{7}\}$, 18 : $\{g_{2}+g_{6}\}$.\\
\hline
28 & 2 : $\{13g_{1}+g_{2}$, $5g_{1}+2g_{2}\}$, 5 : $\{6g_{1}+g_{3}\}$, 9 : $\{2g_{2}+g_{3}\}$, \\
    & 10 : $\{3g_{1}+g_{5}$, $g_{3}+g_{5}\}$, 15 : $\{2g_{1}+g_{2}+g_{4}\}$, 19 : $\{g_{1}+g_{8}\}$.\\
\hline
30 & 2 : $\{14g_{1}+g_{2}$, $2g_{1}+3g_{2}\}$, 6 : $\{6g_{1}+g_{4}$, $2g_{2}+g_{4}\}$, \\
    & 12 : $\{3g_{1}+g_{6}\}$, 20 : $\{g_{9}\}$, 21 : $\{g_{1}+g_{3}+g_{4}\}$.\\
\hline
all & 1 : $\{g_{2}\}$, 2 : $\{g_{1}+g_{2}$, $2g_{1}+g_{2}$, $3g_{1}+g_{2}$, $4g_{1}+g_{2}$, \\
    & $5g_{1}+g_{2}$, $6g_{1}+g_{2}$, $7g_{1}+g_{2}$, $8g_{1}+g_{2}$, $9g_{1}+g_{2}$, \\
    & $10g_{1}+g_{2}$, $11g_{1}+g_{2}$, $12g_{1}+g_{2}$, $13g_{1}+g_{2}$, $14g_{1}+g_{2}$, \\
    & $g_{1}+2g_{2}$, $3g_{1}+2g_{2}$, $5g_{1}+2g_{2}$, $g_{1}+3g_{2}$, $2g_{1}+3g_{2}\}$, \\
    & 3 : $\{g_{3}\}$, 4 : $\{g_{4}\}$, 5 : $\{g_{1}+g_{3}$, $2g_{1}+g_{3}$, $3g_{1}+g_{3}$, \\
    & $4g_{1}+g_{3}$, $5g_{1}+g_{3}$, $6g_{1}+g_{3}$, $g_{1}+2g_{3}\}$, 6 : $\{g_{1}+g_{4}$, \\
    & $2g_{1}+g_{4}$, $3g_{1}+g_{4}$, $4g_{1}+g_{4}$, $5g_{1}+g_{4}$, $6g_{1}+g_{4}$, \\
    & $g_{2}+g_{4}$, $2g_{2}+g_{4}\}$, 7 : $\{g_{5}\}$, 8 : $\{g_{6}\}$, 9 : $\{g_{2}+g_{3}$, \\
    & $2g_{2}+g_{3}\}$, 10 : $\{g_{1}+g_{5}$, $2g_{1}+g_{5}$, $3g_{1}+g_{5}$, $g_{3}+g_{5}\}$, \\
    & 11 : $\{g_{7}\}$, 12 : $\{g_{1}+g_{6}$, $2g_{1}+g_{6}$, $3g_{1}+g_{6}\}$, 13 : $\{g_{1}+g_{2}+g_{3}$, \\
    & $2g_{1}+g_{2}+g_{3}\}$, 14 : $\{g_{8}\}$, 15 : $\{g_{1}+g_{2}+g_{4}$, $2g_{1}+g_{2}+g_{4}\}$, \\
    & 16 : $\{g_{3}+g_{4}\}$, 17 : $\{g_{2}+g_{5}$, $g_{1}+g_{7}\}$, 18 : $\{g_{2}+g_{6}\}$, \\
    & 19 : $\{g_{1}+g_{8}\}$, 20 : $\{g_{9}\}$, 21 : $\{g_{1}+g_{3}+g_{4}\}$.\\
\hline
\end{tabular}

\newpage

\caption{The primitive vectors of norms at most $30$ in the fundamental domain expressed in the basis $(g_i)$. The vectors are first grouped by norm and then all together. If two or more vectors give rise
to conjugate groups, then they are grouped in a set whose index corresponds to the one of Tables \ref{table_subgroups1} and  \ref{table_subgroups2}.}\label{ListVectorsNormAtMost30}

\end{center}
\end{table}

\begin{table}
\begin{center}
\begin{tabular}{|c|cccccccccccc|}
\hline
$g$  & 2 & 3 & 4 & 5 & 6 & 7 & 8 & 9 & 10 & 11 & 12 & 13\\
$2d$  & 2 & 4 & 6 & 8 & 10 & 12 & 14 & 16 & 18 & 20 & 22 & 24\\
\hline
$\# h$  & 1 & 2 & 2 & 2 & 3 & 4 & 3 & 4 & 4 & 5 & 5 & 6\\
$\# \bar \Gamma_h$  & 1 & 2 & 2 & 2 & 3 & 3 & 3 & 4 & 4 & 4 & 5 & 4\\
$\# h, h^2 \leq 2d$  & 1 & 3 & 5 & 7 & 10 & 14 & 17 & 21 & 25 & 30 & 35 & 41\\
$\# \bar \Gamma_{h}, h^2\leq 2d$  & 1 & 3 & 4 & 5 & 7 & 8 & 9 & 10 & 12 & 14 & 16 & 17\\
\hline
\hline
$g$  & 14 & 15 & 16 & 17 & 18 & 19 & 20 & 21 & 22 & 23 & 24 & 25\\
$2d$  & 26 & 28 & 30 & 32 & 34 & 36 & 38 & 40 & 42 & 44 & 46 & 48\\
\hline
$\# h$  & 5 & 8 & 7 & 6 & 8 & 8 & 7 & 10 & 10 & 10 & 11 & 11\\
$\# \bar \Gamma_h$  & 5 & 6 & 5 & 6 & 8 & 6 & 7 & 7 & 7 & 7 & 10 & 7\\
$\# h, h^2 \leq 2d$  & 46 & 54 & 61 & 67 & 75 & 83 & 90 & 100 & 110 & 120 & 131 & 142\\
$\# \bar \Gamma_{h}, h^2\leq 2d$  & 18 & 19 & 21 & 22 & 24 & 25 & 26 & 27 & 29 & 31 & 32 & 33\\
\hline
\hline
$g$  & 26 & 27 & 28 & 29 & 30 & 31 & 32 & 33 & 34 & 35 & 36 & 37\\
$2d$  & 50 & 52 & 54 & 56 & 58 & 60 & 62 & 64 & 66 & 68 & 70 & 72\\
\hline
$\# h$  & 9 & 14 & 11 & 12 & 14 & 16 & 13 & 15 & 16 & 16 & 18 & 16\\
$\# \bar \Gamma_h$  & 9 & 10 & 11 & 8 & 12 & 9 & 11 & 14 & 11 & 11 & 12 & 10\\
$\# h, h^2 \leq 2d$  & 151 & 165 & 176 & 188 & 202 & 218 & 231 & 246 & 262 & 278 & 296 & 312\\
$\# \bar \Gamma_{h}, h^2\leq 2d$  & 34 & 35 & 36 & 37 & 38 & 39 & 40 & 41 & 43 & 44 & 46 & 46\\
\hline
\end{tabular}
\caption{The number of primitive polarizations and modular groups}\label{ListNumberPolarizationsModuli}
\end{center}
\end{table}

\newpage

\subsection{Connection with the $\phi$-invariant} \label{subsec:phi}
In  \cite[Appendix]{CDGK} Ciliberto et al. gave a systematic enumeration of polarizations for genus up to $30$. We will now match their enumeration with our results.

A crucial role in \cite{CDGK} is played by the minimal degree of a polarization $h$
on an effective elliptic curve $E$, namely
\begin{equation*}
\phi (h) = \min \{h.E \mid E^2=0, E >0 \}.
\end{equation*}
Using this parameter and the genus they consider the moduli spaces  $\widehat{\cE}_{g,\phi}$ of polarized Enriques surfaces with given $\phi$ and genus $g$.
The crucial tool in their enumeration is the notion of {\em decomposition type} given in \cite[Definition 4.13]{CDGK}. This can lead to more than one component of
a moduli space $\widehat{\cE}_{g,\phi}$, in our cases denoted by $\widehat{\cE}^{(I)}_{g,\phi}$ and $\widehat{\cE}^{(II)}_{g,\phi}$.

Another essential technical tool is the notion of an {\em isotropic $10$-sequence} as defined in \cite[Definition 3.2]{CDGK}, and which goes back
to  Cossec and Dolgachev \cite[p. 122]{CD}.
This is a collection of effective isotropic classes $E_i, i=1, \ldots ,10$ which span the lattice $M(1/2)=U+E_8(-1)$ over the rationals with the
additional property that $E_i.E_j=1$ for $i \neq j$.
By  \cite[Lemma 3.4]{CDGK}, see also \cite[Lemma 1.6.2 (i)]{Cos} or \cite[Corollary 2.5.5]{CD}, an isotropic $10$-sequence has the further property that $\sum E_i$ is $3$-divisible, i.e. there is a divisor $D$
with $3D \equiv \sum_i E_i$.
By the defining property of an isotropic $10$-sequence it then follows that $D^2=10$. This observation also implies that the $E_i$
form a $\QQ$-basis, but not a $\ZZ$-basis of $M(1/2)$.

We further note that the list of \cite[Appendix]{CDGK} also contains non-primitive numerical polarizations, which we disregard in our approach since they do not lead to new moduli spaces.

We now want to provide a precise matching between the (components of the) moduli spaces $\widehat{\cE}_{g,\phi}$ and our modular varieties $\cM_{\En,h}$.
To do this, we first introduce a new integral
basis $u_i$,  $i= 1, \ldots ,10$ of the lattice $M(1/2)=U + E_8(-1)$ with Gram matrix
\begin{equation}\label{basisui}
G_u=
\left(\begin{array}{cccccccccc}
0 &   1 &   1 &   1 &   3 &   1 &   1 &   1 &   1 &   1\\
 1 &   0 &   1 &   1 &   3 &   1 &   1 &   1 &   1 &   1 \\
 1 &   1 &   0 &   1 &   3 &   1 &   1 &   1 &   1 &   1 \\
  1 &   1 &   1 &   0 &   3 &   1 &   1 &   1 &   1 &   1\\
  3 &   3 &   3 &   3 &  10 &   3 &   3 &   3 &   3 &   3\\
  1 &  1 &   1 &   1 &   3 &   0 &   1 &   1 &   1 &   1\\
  1 &   1 &   1 &   1 &   3 &   1 &   0 &   1 &   1 &   1 \\
  1 &   1 &   1 &   1 &   3 &   1 &   1&   0 &   1 &   1 \\
  1 &   1 &   1 &   1&   3 &   1 &   1 &   1 &   0 &   1 \\
  1 &   1 &   1 &   1 &   3 &   1 &  1 &   1 &   1 &   0
\end{array}\right)
\end{equation}

The equivalence between the basis $u_i$ and the standard basis of the lattice $U + E_8(-1)$ is provided by the matrix
\begin{equation}\label{test}
T=
\left(\begin{array}{cccccccccc}
  0 &  -1 &   1 &   0 &   0 &   0 &   1&   0 &  0 &   1 \\
  1 & -1 &   1 &   0 &   0 &   0 &   1 &   0 &   0 &   0 \\
  0 &  -1 &   1 & -1 &   1 &  -1 &   0 &   0 &   0 &   0 \\
  1 &  -1 &   2 &   1&  -2 &   1 &   2 &   1 &   1 &   1 \\
  -1 &  -1 &  -1 &  -1 &   3 &  -1 &  -1 &  -1 &  -2 &  -1 \\
   0 &   0 &   0 &   0 &   0 &   0 &   0 &   1 &  -1 &   0 \\
   0 &   1 &  -1 &  -1 &   0 &   0 &   0 &   0 &   0 &   0 \\
   0 &  -1 &   1 &   0 &   0 &   1 &   0 &   0 &   0 &   0 \\
   0 &   0 &   0 &  -1 &   0 &  0 &   0 &   1&   0 &   0 \\
   0 &   0 &   1 &   1 &  -1 &   0 &   1 &   0 &   1 &   0
\end{array}\right)
\end{equation}
meaning that $T^tG_u T=G$ with $G$ as in \eqref{equ:G}.
The columns of the matrix $T$ are the vectors $u_i$.

The isotropic $10$-sequence which we are looking for can be obtained from the basis $u_i$ by means of the transition matrix
\begin{equation}\label{isotropic10}
I =
\left(\begin{array}{cccccccccc}
 -1 &  -1 &  -1 &  -1 &   3 &  -1 &  -&  -1 &  -1 &  -1\\
  1 &   0 &   0 &   0 &   0 &   0 &   0 &   0 &   0 &  0 \\
  0 &   1 &   0 &   0 &   0 &   0 &   0 &   0, &   0 &   0 \\
  0 &   0 &   1 &   0 &   0 &   0 &   0 &   0 &   0 &   0 \\
  0&   0&   0&   1&   0&   0&   0&   0&   0&   0 \\
  0&   0&   0&   0&   0&   1&   0&   0&   0&   0 \\
  0&   0&   0&   0&   0&   0&   1&   0&   0&   0 \\
  0&   0&   0&   0&   0&   0&   0&   1&   0&   0 \\
  0&   0&   0&   0 &   0&   0&   0&   0&   1&   0 \\
  0&   0&   0&   0&   0,&   0&   0&   0&   0&   1 \\
\end{array}\right)
\end{equation}
One sees immediately that $\det(I)=3$. The Gram matrix of the vectors $E_i$ is given by
\begin{equation}\label{GramE}
G_E = (1)_{1\leq i,j\leq 10} - I_n
\end{equation}
which immediately shows that the defining conditions of an isotropic $10$-sequence are satisfied.

We can now make the correspondence between the varieties $\cM_{\En,h}$ and $\widehat{\cE}_{g,\phi}$ for $g \leq 30$ explicit.
The result is given in Tables \ref{Expression_Isotropic_Vectors_Part1} and \ref{Expression_Isotropic_Vectors_Part2}. Note that we also list non-primitive polarizations here in order to have a full matching with
 \cite[Appendix]{CDGK}.

\begin{table}
\begin{center}
{\scriptsize
\begin{tabular}{|cccc|cccc|cccc|}
\hline
$g$  &  $\phi$  &   CDGK   &   DH     &    $g$  &  $\phi$  &  CDGK  &  DH   &    $g$  &   $\phi$  &  CDGK  &  DH\\
\hline
2  &  1  &  $\widehat{E}_{2,1}$  &  $g_{2}$    &    3  &  1  &  $\widehat{E}_{3,1}$  &  $g_{1}+g_{2}$    &    3  &  2  &  $\widehat{E}_{3,2}$  &  $g_{3}$\\
4  &  1  &  $\widehat{E}_{4,1}$  &  $2g_{1}+g_{2}$    &    4  &  2  &  $\widehat{E}_{4,2}$  &  $g_{4}$    &    5  &  1  &  $\widehat{E}_{5,1}$  &  $3g_{1}+g_{2}$\\
5  &  2  &  $\widehat{E}_{5,2}^{(I)}$  &  $g_{1}+g_{3}$    &    5  &  2  &  $\widehat{E}_{5,2}^{(II)}$  &  $2g_{2}$    &    6  &  1  &  $\widehat{E}_{6,1}$  &  $4g_{1}+g_{2}$\\
6  &  2  &  $\widehat{E}_{6,2}$  &  $g_{1}+g_{4}$    &    6  &  3  &  $\widehat{E}_{6,3}$  &  $g_{5}$    &    7  &  1  &  $\widehat{E}_{7,1}$  &  $5g_{1}+g_{2}$\\
7  &  2  &  $\widehat{E}_{7,2}^{(I)}$  &  $2g_{1}+g_{3}$    &    7  &  2  &  $\widehat{E}_{7,2}^{(II)}$  &  $g_{1}+2g_{2}$    &    7  &  3  &  $\widehat{E}_{7,3}$  &  $g_{6}$\\
8  &  1  &  $\widehat{E}_{8,1}$  &  $6g_{1}+g_{2}$    &    8  &  2  &  $\widehat{E}_{8,2}$  &  $2g_{1}+g_{4}$    &    8  &  3  &  $\widehat{E}_{8,3}$  &  $g_{2}+g_{3}$\\
9  &  1  &  $\widehat{E}_{9,1}$  &  $7g_{1}+g_{2}$    &    9  &  2  &  $\widehat{E}_{9,2}^{(I)}$  &  $3g_{1}+g_{3}$    &    9  &  2  &  $\widehat{E}_{9,2}^{(II)}$  &  $2g_{1}+2g_{2}$\\
9  &  3  &  $\widehat{E}_{9,3}^{(I)}$  &  $g_{1}+g_{5}$    &    9  &  3  &  $\widehat{E}_{9,3}^{(II)}$  &  $g_{2}+g_{4}$    &    9  &  4  &  $\widehat{E}_{9,4}$  &  $2g_{3}$\\
10  &  1  &  $\widehat{E}_{10,1}$  &  $8g_{1}+g_{2}$    &    10  &  2  &  $\widehat{E}_{10,2}$  &  $3g_{1}+g_{4}$    &    10  &  3  &  $\widehat{E}_{10,3}^{(I)}$  &  $g_{1}+g_{6}$\\
10  &  3  &  $\widehat{E}_{10,3}^{(II)}$  &  $3g_{2}$    &    10  &  4  &  $\widehat{E}_{10,4}$  &  $g_{7}$    &    11  &  1  &  $\widehat{E}_{11,1}$  &  $9g_{1}+g_{2}$\\
11  &  2  &  $\widehat{E}_{11,2}^{(I)}$  &  $4g_{1}+g_{3}$    &    11  &  2  &  $\widehat{E}_{11,2}^{(II)}$  &  $3g_{1}+2g_{2}$    &    11  &  3  &  $\widehat{E}_{11,3}$  &  $g_{1}+g_{2}+g_{3}$\\
11  &  4  &  $\widehat{E}_{11,4}$  &  $g_{8}$    &    12  &  1  &  $\widehat{E}_{12,1}$  &  $10g_{1}+g_{2}$    &    12  &  2  &  $\widehat{E}_{12,2}$  &  $4g_{1}+g_{4}$\\
12  &  3  &  $\widehat{E}_{12,3}^{(I)}$  &  $g_{1}+g_{2}+g_{4}$    &    12  &  3  &  $\widehat{E}_{12,3}^{(II)}$  &  $2g_{1}+g_{5}$    &    12  &  4  &  $\widehat{E}_{12,4}$  &  $g_{3}+g_{4}$\\
13  &  1  &  $\widehat{E}_{13,1}$  &  $11g_{1}+g_{2}$    &    13  &  2  &  $\widehat{E}_{13,2}^{(I)}$  &  $5g_{1}+g_{3}$    &    13  &  2  &  $\widehat{E}_{13,2}^{(II)}$  &  $4g_{1}+2g_{2}$\\
13  &  3  &  $\widehat{E}_{13,3}^{(I)}$  &  $2g_{1}+g_{6}$    &    13  &  3  &  $\widehat{E}_{13,3}^{(II)}$  &  $g_{1}+3g_{2}$    &    13  &  4  &  $\widehat{E}_{13,4}^{(I)}$  &  $g_{2}+g_{5}$\\
13  &  4  &  $\widehat{E}_{13,4}^{(II)}$  &  $2g_{4}$    &    13  &  4  &  $\widehat{E}_{13,4}^{(III)}$  &  $g_{1}+2g_{3}$    &    14  &  1  &  $\widehat{E}_{14,1}$  &  $12g_{1}+g_{2}$\\
14  &  2  &  $\widehat{E}_{14,2}$  &  $5g_{1}+g_{4}$    &    14  &  3  &  $\widehat{E}_{14,3}$  &  $2g_{1}+g_{2}+g_{3}$    &    14  &  4  &  $\widehat{E}_{14,4}^{(I)}$  &  $g_{2}+g_{6}$\\
14  &  4  &  $\widehat{E}_{14,4}^{(II)}$  &  $g_{1}+g_{7}$    &    15  &  1  &  $\widehat{E}_{15,1}$  &  $13g_{1}+g_{2}$    &    15  &  2  &  $\widehat{E}_{15,2}^{(I)}$  &  $6g_{1}+g_{3}$\\
15  &  2  &  $\widehat{E}_{15,2}^{(II)}$  &  $5g_{1}+2g_{2}$    &    15  &  3  &  $\widehat{E}_{15,3}^{(I)}$  &  $2g_{1}+g_{2}+g_{4}$    &    15  &  3  &  $\widehat{E}_{15,3}^{(II)}$  &  $3g_{1}+g_{5}$\\
15  &  4  &  $\widehat{E}_{15,4}^{(I)}$  &  $g_{1}+g_{8}$    &    15  &  4  &  $\widehat{E}_{15,4}^{(II)}$  &  $2g_{2}+g_{3}$    &    15  &  5  &  $\widehat{E}_{15,5}$  &  $g_{3}+g_{5}$\\
16  &  1  &  $\widehat{E}_{16,1}$  &  $14g_{1}+g_{2}$    &    16  &  2  &  $\widehat{E}_{16,2}$  &  $6g_{1}+g_{4}$    &    16  &  3  &  $\widehat{E}_{16,3}^{(I)}$  &  $3g_{1}+g_{6}$\\
16  &  3  &  $\widehat{E}_{16,3}^{(II)}$  &  $2g_{1}+3g_{2}$    &    16  &  4  &  $\widehat{E}_{16,4}^{(I)}$  &  $2g_{2}+g_{4}$    &    16  &  4  &  $\widehat{E}_{16,4}^{(II)}$  &  $g_{1}+g_{3}+g_{4}$\\
16  &  5  &  $\widehat{E}_{16,5}$  &  $g_{9}$    &    17  &  1  &  $\widehat{E}_{17,1}$  &  $15g_{1}+g_{2}$    &    17  &  2  &  $\widehat{E}_{17,2}^{(I)}$  &  $7g_{1}+g_{3}$\\
17  &  2  &  $\widehat{E}_{17,2}^{(II)}$  &  $6g_{1}+2g_{2}$    &    17  &  3  &  $\widehat{E}_{17,3}$  &  $3g_{1}+g_{2}+g_{3}$    &    17  &  4  &  $\widehat{E}_{17,4}^{(I)}$  &  $g_{1}+2g_{4}$\\
17  &  4  &  $\widehat{E}_{17,4}^{(II)}$  &  $g_{1}+g_{2}+g_{5}$    &    17  &  4  &  $\widehat{E}_{17,4}^{(III)}$  &  $2g_{1}+2g_{3}$    &    17  &  4  &  $\widehat{E}_{17,4}^{(IV)}$  &  $4g_{2}$\\
17  &  5  &  $\widehat{E}_{17,5}$  &  $g_{3}+g_{6}$    &    18  &  1  &  $\widehat{E}_{18,1}$  &  $16g_{1}+g_{2}$    &    18  &  2  &  $\widehat{E}_{18,2}$  &  $7g_{1}+g_{4}$\\
18  &  3  &  $\widehat{E}_{18,3}^{(I)}$  &  $3g_{1}+g_{2}+g_{4}$    &    18  &  3  &  $\widehat{E}_{18,3}^{(II)}$  &  $4g_{1}+g_{5}$    &    18  &  4  &  $\widehat{E}_{18,4}^{(I)}$  &  $g_{1}+g_{2}+g_{6}$\\
18  &  4  &  $\widehat{E}_{18,4}^{(II)}$  &  $2g_{1}+g_{7}$    &    18  &  5  &  $\widehat{E}_{18,5}^{(I)}$  &  $g_{2}+2g_{3}$    &    18  &  5  &  $\widehat{E}_{18,5}^{(II)}$  &  $g_{4}+g_{5}$\\
19  &  1  &  $\widehat{E}_{19,1}$  &  $17g_{1}+g_{2}$    &    19  &  2  &  $\widehat{E}_{19,2}^{(I)}$  &  $8g_{1}+g_{3}$    &    19  &  2  &  $\widehat{E}_{19,2}^{(II)}$  &  $7g_{1}+2g_{2}$\\
19  &  3  &  $\widehat{E}_{19,3}^{(I)}$  &  $4g_{1}+g_{6}$    &    19  &  3  &  $\widehat{E}_{19,3}^{(II)}$  &  $3g_{1}+3g_{2}$    &    19  &  4  &  $\widehat{E}_{19,4}^{(I)}$  &  $2g_{1}+g_{8}$\\
19  &  4  &  $\widehat{E}_{19,4}^{(II)}$  &  $g_{1}+2g_{2}+g_{3}$    &    19  &  5  &  $\widehat{E}_{19,5}^{(I)}$  &  $g_{4}+g_{6}$    &    19  &  5  &  $\widehat{E}_{19,5}^{(II)}$  &  $g_{2}+g_{7}$\\
19  &  6  &  $\widehat{E}_{19,6}$  &  $3g_{3}$    &    20  &  1  &  $\widehat{E}_{20,1}$  &  $18g_{1}+g_{2}$    &    20  &  2  &  $\widehat{E}_{20,2}$  &  $8g_{1}+g_{4}$\\
20  &  3  &  $\widehat{E}_{20,3}$  &  $4g_{1}+g_{2}+g_{3}$    &    20  &  4  &  $\widehat{E}_{20,4}^{(I)}$  &  $g_{1}+2g_{2}+g_{4}$    &    20  &  4  &  $\widehat{E}_{20,4}^{(II)}$  &  $2g_{1}+g_{3}+g_{4}$\\
20  &  5  &  $\widehat{E}_{20,5}^{(I)}$  &  $g_{2}+g_{8}$    &    20  &  5  &  $\widehat{E}_{20,5}^{(II)}$  &  $g_{1}+g_{3}+g_{5}$    &    21  &  1  &  $\widehat{E}_{21,1}$  &  $19g_{1}+g_{2}$\\
\hline
\end{tabular}
}
\caption{Expression of the entries of \cite[Appendix]{CDGK} in terms of the $g$ vectors (Part 1). Here the first column gives the genus $g$ and the second the value of $\phi$. The next two columns give the description of the polarization according to \cite{CDGK} and in terms of the $g_i$. }
\label{Expression_Isotropic_Vectors_Part1}
\end{center}
\end{table}

\begin{table}
\begin{center}
{\scriptsize
\begin{tabular}{|cccc|cccc|cccc|}
\hline
$g$  &  $\phi$  &   CDGK   &   DH     &    $g$  &  $\phi$  &  CDGK  &  DH   &    $g$  &   $\phi$  &  CDGK  &  DH\\
\hline
21  &  2  &  $\widehat{E}_{21,2}^{(I)}$  &  $9g_{1}+g_{3}$    &    21  &  2  &  $\widehat{E}_{21,2}^{(II)}$  &  $8g_{1}+2g_{2}$    &    21  &  3  &  $\widehat{E}_{21,3}^{(I)}$  &  $5g_{1}+g_{5}$\\
21  &  3  &  $\widehat{E}_{21,3}^{(II)}$  &  $4g_{1}+g_{2}+g_{4}$    &    21  &  4  &  $\widehat{E}_{21,4}^{(I)}$  &  $g_{1}+4g_{2}$    &    21  &  4  &  $\widehat{E}_{21,4}^{(II)}$  &  $3g_{1}+2g_{3}$\\
21  &  4  &  $\widehat{E}_{21,4}^{(III)}$  &  $2g_{1}+2g_{4}$    &    21  &  4  &  $\widehat{E}_{21,4}^{(IV)}$  &  $2g_{1}+g_{2}+g_{5}$    &    21  &  5  &  $\widehat{E}_{21,5}^{(I)}$  &  $g_{1}+g_{9}$\\
21  &  5  &  $\widehat{E}_{21,5}^{(II)}$  &  $g_{2}+g_{3}+g_{4}$    &    21  &  6  &  $\widehat{E}_{21,6}$  &  $2g_{5}$    &    22  &  1  &  $\widehat{E}_{22,1}$  &  $20g_{1}+g_{2}$\\
22  &  2  &  $\widehat{E}_{22,2}$  &  $9g_{1}+g_{4}$    &    22  &  3  &  $\widehat{E}_{22,3}^{(I)}$  &  $5g_{1}+g_{6}$    &    22  &  3  &  $\widehat{E}_{22,3}^{(II)}$  &  $4g_{1}+3g_{2}$\\
22  &  4  &  $\widehat{E}_{22,4}^{(I)}$  &  $2g_{1}+g_{2}+g_{6}$    &    22  &  4  &  $\widehat{E}_{22,4}^{(II)}$  &  $3g_{1}+g_{7}$    &    22  &  5  &  $\widehat{E}_{22,5}^{(I)}$  &  $g_{1}+g_{3}+g_{6}$\\
22  &  5  &  $\widehat{E}_{22,5}^{(II)}$  &  $2g_{2}+g_{5}$    &    22  &  5  &  $\widehat{E}_{22,5}^{(III)}$  &  $g_{2}+2g_{4}$    &    22  &  6  &  $\widehat{E}_{22,6}$  &  $g_{10}$\\
23  &  1  &  $\widehat{E}_{23,1}$  &  $21g_{1}+g_{2}$    &    23  &  2  &  $\widehat{E}_{23,2}^{(I)}$  &  $10g_{1}+g_{3}$    &    23  &  2  &  $\widehat{E}_{23,2}^{(II)}$  &  $9g_{1}+2g_{2}$\\
23  &  3  &  $\widehat{E}_{23,3}$  &  $5g_{1}+g_{2}+g_{3}$    &    23  &  4  &  $\widehat{E}_{23,4}^{(I)}$  &  $2g_{1}+2g_{2}+g_{3}$    &    23  &  4  &  $\widehat{E}_{23,4}^{(II)}$  &  $3g_{1}+g_{8}$\\
23  &  5  &  $\widehat{E}_{23,5}^{(I)}$  &  $g_{1}+g_{2}+2g_{3}$    &    23  &  5  &  $\widehat{E}_{23,5}^{(II)}$  &  $g_{1}+g_{4}+g_{5}$    &    23  &  5  &  $\widehat{E}_{23,5}^{(III)}$  &  $2g_{2}+g_{6}$\\
23  &  6  &  $\widehat{E}_{23,6}$  &  $g_{3}+g_{8}$    &    24  &  1  &  $\widehat{E}_{24,1}$  &  $22g_{1}+g_{2}$    &    24  &  2  &  $\widehat{E}_{24,2}$  &  $10g_{1}+g_{4}$\\
24  &  3  &  $\widehat{E}_{24,3}^{(I)}$  &  $6g_{1}+g_{5}$    &    24  &  3  &  $\widehat{E}_{24,3}^{(II)}$  &  $5g_{1}+g_{2}+g_{4}$    &    24  &  4  &  $\widehat{E}_{24,4}^{(I)}$  &  $2g_{1}+2g_{2}+g_{4}$\\
24  &  4  &  $\widehat{E}_{24,4}^{(II)}$  &  $3g_{1}+g_{3}+g_{4}$    &    24  &  5  &  $\widehat{E}_{24,5}^{(I)}$  &  $g_{1}+g_{4}+g_{6}$    &    24  &  5  &  $\widehat{E}_{24,5}^{(II)}$  &  $g_{1}+g_{2}+g_{7}$\\
24  &  5  &  $\widehat{E}_{24,5}^{(III)}$  &  $3g_{2}+g_{3}$    &    24  &  6  &  $\widehat{E}_{24,6}^{(I)}$  &  $2g_{3}+g_{4}$    &    24  &  6  &  $\widehat{E}_{24,6}^{(II)}$  &  $g_{5}+g_{6}$\\
25  &  1  &  $\widehat{E}_{25,1}$  &  $23g_{1}+g_{2}$    &    25  &  2  &  $\widehat{E}_{25,2}^{(I)}$  &  $11g_{1}+g_{3}$    &    25  &  2  &  $\widehat{E}_{25,2}^{(II)}$  &  $10g_{1}+2g_{2}$\\
25  &  3  &  $\widehat{E}_{25,3}^{(I)}$  &  $6g_{1}+g_{6}$    &    25  &  3  &  $\widehat{E}_{25,3}^{(II)}$  &  $5g_{1}+3g_{2}$    &    25  &  4  &  $\widehat{E}_{25,4}^{(I)}$  &  $2g_{1}+4g_{2}$\\
25  &  4  &  $\widehat{E}_{25,4}^{(II)}$  &  $4g_{1}+2g_{3}$    &    25  &  4  &  $\widehat{E}_{25,4}^{(III)}$  &  $3g_{1}+2g_{4}$    &    25  &  4  &  $\widehat{E}_{25,4}^{(IV)}$  &  $3g_{1}+g_{2}+g_{5}$\\
25  &  5  &  $\widehat{E}_{25,5}^{(I)}$  &  $2g_{1}+g_{3}+g_{5}$    &    25  &  5  &  $\widehat{E}_{25,5}^{(II)}$  &  $g_{1}+g_{2}+g_{8}$    &    25  &  5  &  $\widehat{E}_{25,5}^{(III)}$  &  $3g_{2}+g_{4}$\\
25  &  6  &  $\widehat{E}_{25,6}^{(I)}$  &  $g_{1}+3g_{3}$    &    25  &  6  &  $\widehat{E}_{25,6}^{(II)}$  &  $2g_{6}$    &    25  &  6  &  $\widehat{E}_{25,6}^{(III)}$  &  $g_{4}+g_{7}$\\
26  &  1  &  $\widehat{E}_{26,1}$  &  $24g_{1}+g_{2}$    &    26  &  2  &  $\widehat{E}_{26,2}$  &  $11g_{1}+g_{4}$    &    26  &  3  &  $\widehat{E}_{26,3}$  &  $6g_{1}+g_{2}+g_{3}$\\
26  &  4  &  $\widehat{E}_{26,4}^{(I)}$  &  $3g_{1}+g_{2}+g_{6}$    &    26  &  4  &  $\widehat{E}_{26,4}^{(II)}$  &  $4g_{1}+g_{7}$    &    26  &  5  &  $\widehat{E}_{26,5}^{(I)}$  &  $2g_{1}+g_{9}$\\
26  &  5  &  $\widehat{E}_{26,5}^{(II)}$  &  $g_{1}+g_{2}+g_{3}+g_{4}$    &    26  &  5  &  $\widehat{E}_{26,5}^{(III)}$  &  $5g_{2}$    &    26  &  6  &  $\widehat{E}_{26,6}^{(I)}$  &  $g_{2}+g_{3}+g_{5}$\\
26  &  6  &  $\widehat{E}_{26,6}^{(II)}$  &  $g_{4}+g_{8}$    &    27  &  1  &  $\widehat{E}_{27,1}$  &  $25g_{1}+g_{2}$    &    27  &  2  &  $\widehat{E}_{27,2}^{(I)}$  &  $12g_{1}+g_{3}$\\
27  &  2  &  $\widehat{E}_{27,2}^{(II)}$  &  $11g_{1}+2g_{2}$    &    27  &  3  &  $\widehat{E}_{27,3}^{(I)}$  &  $7g_{1}+g_{5}$    &    27  &  3  &  $\widehat{E}_{27,3}^{(II)}$  &  $6g_{1}+g_{2}+g_{4}$\\
27  &  4  &  $\widehat{E}_{27,4}^{(I)}$  &  $3g_{1}+2g_{2}+g_{3}$    &    27  &  4  &  $\widehat{E}_{27,4}^{(II)}$  &  $4g_{1}+g_{8}$    &    27  &  5  &  $\widehat{E}_{27,5}^{(I)}$  &  $2g_{1}+g_{3}+g_{6}$\\
27  &  5  &  $\widehat{E}_{27,5}^{(II)}$  &  $g_{1}+2g_{2}+g_{5}$    &    27  &  5  &  $\widehat{E}_{27,5}^{(III)}$  &  $g_{1}+g_{2}+2g_{4}$    &    27  &  6  &  $\widehat{E}_{27,6}^{(I)}$  &  $g_{1}+2g_{5}$\\
27  &  6  &  $\widehat{E}_{27,6}^{(II)}$  &  $g_{3}+2g_{4}$    &    27  &  6  &  $\widehat{E}_{27,6}^{(III)}$  &  $g_{2}+g_{9}$    &    28  &  1  &  $\widehat{E}_{28,1}$  &  $26g_{1}+g_{2}$\\
28  &  2  &  $\widehat{E}_{28,2}$  &  $12g_{1}+g_{4}$    &    28  &  3  &  $\widehat{E}_{28,3}^{(I)}$  &  $7g_{1}+g_{6}$    &    28  &  3  &  $\widehat{E}_{28,3}^{(II)}$  &  $6g_{1}+3g_{2}$\\
28  &  4  &  $\widehat{E}_{28,4}^{(I)}$  &  $3g_{1}+2g_{2}+g_{4}$    &    28  &  4  &  $\widehat{E}_{28,4}^{(II)}$  &  $4g_{1}+g_{3}+g_{4}$    &    28  &  5  &  $\widehat{E}_{28,5}^{(I)}$  &  $2g_{1}+g_{2}+2g_{3}$\\
28  &  5  &  $\widehat{E}_{28,5}^{(II)}$  &  $2g_{1}+g_{4}+g_{5}$    &    28  &  5  &  $\widehat{E}_{28,5}^{(III)}$  &  $g_{1}+2g_{2}+g_{6}$    &    28  &  6  &  $\widehat{E}_{28,6}^{(I)}$  &  $g_{1}+g_{10}$\\
28  &  6  &  $\widehat{E}_{28,6}^{(II)}$  &  $3g_{4}$    &    28  &  6  &  $\widehat{E}_{28,6}^{(III)}$  &  $g_{2}+g_{3}+g_{6}$    &    28  &  7  &  $\widehat{E}_{28,7}$  &  $2g_{3}+g_{5}$\\
29  &  1  &  $\widehat{E}_{29,1}$  &  $27g_{1}+g_{2}$    &    29  &  2  &  $\widehat{E}_{29,2}^{(I)}$  &  $13g_{1}+g_{3}$    &    29  &  2  &  $\widehat{E}_{29,2}^{(II)}$  &  $12g_{1}+2g_{2}$\\
29  &  3  &  $\widehat{E}_{29,3}$  &  $7g_{1}+g_{2}+g_{3}$    &    29  &  4  &  $\widehat{E}_{29,4}^{(I)}$  &  $3g_{1}+4g_{2}$    &    29  &  4  &  $\widehat{E}_{29,4}^{(II)}$  &  $5g_{1}+2g_{3}$\\
29  &  4  &  $\widehat{E}_{29,4}^{(III)}$  &  $4g_{1}+2g_{4}$    &    29  &  4  &  $\widehat{E}_{29,4}^{(IV)}$  &  $4g_{1}+g_{2}+g_{5}$    &    29  &  5  &  $\widehat{E}_{29,5}^{(I)}$  &  $2g_{1}+g_{4}+g_{6}$\\
29  &  5  &  $\widehat{E}_{29,5}^{(II)}$  &  $2g_{1}+g_{2}+g_{7}$    &    29  &  5  &  $\widehat{E}_{29,5}^{(III)}$  &  $g_{1}+3g_{2}+g_{3}$    &    29  &  6  &  $\widehat{E}_{29,6}^{(I)}$  &  $g_{1}+g_{3}+g_{8}$\\
29  &  6  &  $\widehat{E}_{29,6}^{(II)}$  &  $2g_{2}+2g_{3}$    &    29  &  6  &  $\widehat{E}_{29,6}^{(III)}$  &  $g_{2}+g_{4}+g_{5}$    &    30  &  1  &  $\widehat{E}_{30,1}$  &  $28g_{1}+g_{2}$\\
30  &  2  &  $\widehat{E}_{30,2}$  &  $13g_{1}+g_{4}$    &    30  &  3  &  $\widehat{E}_{30,3}^{(I)}$  &  $7g_{1}+g_{2}+g_{4}$    &    30  &  3  &  $\widehat{E}_{30,3}^{(II)}$  &  $8g_{1}+g_{5}$\\
30  &  4  &  $\widehat{E}_{30,4}^{(I)}$  &  $4g_{1}+g_{2}+g_{6}$    &    30  &  4  &  $\widehat{E}_{30,4}^{(II)}$  &  $5g_{1}+g_{7}$    &    30  &  5  &  $\widehat{E}_{30,5}^{(I)}$  &  $3g_{1}+g_{3}+g_{5}$\\
30  &  5  &  $\widehat{E}_{30,5}^{(II)}$  &  $2g_{1}+g_{2}+g_{8}$    &    30  &  5  &  $\widehat{E}_{30,5}^{(III)}$  &  $g_{1}+3g_{2}+g_{4}$    &    30  &  6  &  $\widehat{E}_{30,6}^{(I)}$  &  $g_{1}+2g_{3}+g_{4}$\\
30  &  6  &  $\widehat{E}_{30,6}^{(II)}$  &  $g_{1}+g_{5}+g_{6}$    &    30  &  6  &  $\widehat{E}_{30,6}^{(III)}$  &  $g_{2}+g_{4}+g_{6}$    &    30  &  6  &  $\widehat{E}_{30,6}^{(IV)}$  &  $2g_{2}+g_{7}$\\
30  &  7  &  $\widehat{E}_{30,7}^{(I)}$  &  $g_{3}+g_{9}$    &    30  &  7  &  $\widehat{E}_{30,7}^{(II)}$  &  $2g_{1}+g_{2}+g_{8}$    &      &&&             \\
\hline
\end{tabular}
}
\caption{Expression of the entries of \cite[Appendix]{CDGK} in terms of the $g$ vectors (Part 2)}
\label{Expression_Isotropic_Vectors_Part2}
\end{center}
\end{table}

\newpage
\subsection{The Tits building}\label{subsec:isotropic}

As we already recalled the $0$- and $1$-dimensional cusps of the modular varieties ${\cM}_{\En,h}= \Gamma^+_h \backslash \cD_N$
correspond to the orbits of isotropic vectors $l$ and isotropic planes $h$ in the lattice $N= U + U(2) + E_8(-2)$ with respect to the groups $\Gamma^+_h$.
The inclusions $l \subset h$ characterize when a $0$-dimensional cusp is contained in a $1$-dimensional cusp. Taking orbits modulo the group $\Gamma^+_h$
defines the Tits building which thus incorporates the entire combinatorial structure of the boundary.
We will now investigate this systematically for small degrees.

The classical case of unpolarized Enriques surfaces is well known. We start by recalling that $\cM_{\En}= \Orth^+(N) \backslash \cD_N$ has two $0$-dimensional and two $1$-dimensional cusps each.
Proofs of this were given by Sterk \cite[Propositions 4.5 and 4.6]{Ste} and Allcock \cite[Corollary 4]{All}.
By applying the computational techniques of Section \ref{Sec_Algorithms} we confirm these results.
It is not difficult to give explicit representatives of these orbits.
The orbits of isotropic lines are spanned by $L_1 = \ZZ e_1$ and $L_2 = \ZZ e_3$.
The orbits of isotropic planes of $N$ are $P_1 = \ZZ e_1 + \ZZ e_3$ and $P_2 = \ZZ  (2 e_1 + 2 e_2 + w) + \ZZ e_3$ with $w$
a vector of norm $4$ in $E_8$, which then, viewed as a vector in $E_8(-2)$ has norm $w^2=-8$ in $E_8(-2)$.
Here $(e_1,e_2)$ and $(e_3,e_4)$ are standard bases of $U$ and $U(2)$.
The Tits building is displayed in Table \ref{table_subgroups1}.
The stabilizer of $L_1$ has an image in the discriminant group which is equal to the full group, while
the image of the stabilizer of $L_2$ has index $527 = 17 \cdot 31$.
The stabilizer of the plane $P_1$ has an image of index $527$ while the image of the stabilizer of $P_2$ has an image of index $23715 = 3^2 \cdot 5 \cdot 17 \cdot 31$.

Here we would also like to mention that the space $\widetilde {\cM}_{\En}= \widetilde \Orth^+(N)\backslash \cD_N$ has $528$ $0$-dimensional cusps (corresponding to isotropic lines) and $24242$
$1$-dimensional cusps (corresponding isotropic planes). Under the group $\Orth^+(\FF_2)$ the $528$ $0$-dimensional cusps decompose into two orbits, one of length $527$ and one of length $1$, see
the above discussion and \cite[p. 534]{CDL}. The set of $24242$ isotropic planes also has $2$ orbits, and these are of length $527$ and $23715$ respectively.

When working with the group $\Gamma_h$, we need to compute the orbits for a subgroup of the full
isometry group $G$. That is, given an orbit $xG$ we write $G_x$ for the stabilizer of $x$ by $G$.
The decomposition $xG = \cup_{i\in I} x_i \Gamma_h$ corresponds to a double coset decomposition
\begin{equation*}
G = \cup_{i\in I} G_x h_i \Gamma_h \mbox{~for~} x_i = x h_i.
\end{equation*}
Thus orbit splitting can be done with double coset decomposition.
This is in general a difficult problem but in the case of finite groups there are well known
algorithms \cite[Sec. 8.1.1]{Seress}.

\begin{lemma}
Let $G$ be a group, $U$ a normal subgroup and $K$, $H$ two subgroups of $G$ such that $U \subset K$.
Then the quotient map $G \to  G/U$ establishes a many-to-one correspondence between
double coset decompositions of $G$ by $(K,H)$ and of $G / U$ by $(K/U, \overline{H})$ where $\overline{H}$
is the image of $H$ in $G/U$.
\end{lemma}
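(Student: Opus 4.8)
The plan is to establish the correspondence by unwinding the definitions of double cosets on both sides and checking that passing to the quotient $G/U$ is compatible with the double-coset structure. Write $\pi\colon G \to G/U$ for the quotient homomorphism and $\overline{x}=\pi(x)$. The claim has two halves: first, that every $(K,H)$-double coset $KxH$ maps \emph{into} a single $(K/U,\overline H)$-double coset of $G/U$, so that the assignment $KxH \mapsto (K/U)\,\overline{x}\,\overline H$ is well defined; and second, that this assignment is surjective, so that it is genuinely a (many-to-one) correspondence. The hypothesis $U\subset K$ is what makes the first half clean, and the normality of $U$ is what makes $K/U$ and $\overline H = HU/U$ actual subgroups of $G/U$ in the first place.

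\textbf{Well-definedness.} Suppose $x' \in KxH$, say $x' = kxh$ with $k\in K$, $h\in H$. Applying $\pi$ gives $\overline{x'} = \overline{k}\,\overline{x}\,\overline{h}$ with $\overline{k}\in K/U$ and $\overline{h}\in\overline H$, so $\overline{x'}\in (K/U)\,\overline{x}\,\overline H$. Hence the whole double coset $KxH$ lands in the single double coset $(K/U)\,\overline{x}\,\overline H$, and the map on the level of double cosets is well defined. (Here we only need that $\pi$ is a homomorphism carrying $K$ onto $K/U$ and $H$ onto $\overline H$; the condition $U\subset K$ is not yet used, but it guarantees $K/U$ makes sense as a subgroup and that $K$ is a union of $U$-cosets, which is needed for the fibre analysis below.)

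\textbf{Surjectivity.} Given a double coset $\overline K\,\overline y\,\overline H$ of $G/U$ with $\overline y = \pi(y)$ for some $y\in G$, its preimage is $\pi^{-1}(\overline K\,\overline y\,\overline H)$. Since $U\subset K$ we have $\pi^{-1}(K/U)=K$, and since $U$ is normal $\pi^{-1}(\overline H)=HU$; a short computation with cosets shows $\pi^{-1}(\overline K\,\overline y\,\overline H) = K y (HU) = (KyH)U$, a union of $(K,H)$-double cosets in $G$ (using again that left-multiplying a union of such double cosets by the normal subgroup $U$ permutes them). In particular it is nonempty, so $KyH$ is a double coset mapping onto $\overline K\,\overline y\,\overline H$. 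This proves surjectivity, and exhibits the fibres: the double cosets of $G$ lying over a fixed double coset of $G/U$ are exactly those contained in the single $U$-saturated set $(KyH)U$, which is in general a union of several of them — hence ``many-to-one''.

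\textbf{Main obstacle.} There is no serious obstacle; the only points requiring a little care are (i) checking $\pi^{-1}(\overline H) = HU$ and that $U\subset K \Rightarrow \pi^{-1}(K/U)=K$, which are routine, and (ii) being precise that the fibre over a double coset is a union of $(K,H)$-double cosets rather than a single one, so that the phrase ``many-to-one correspondence'' is justified rather than overstated. The practical payoff, which is the reason the lemma is stated, is that orbit-splitting computations for $\Gamma_h$ inside $\Orth^+(N)$ can be pushed down to double-coset computations in the \emph{finite} group $\Orth^+(\FF_2^{10})$ with $U=\widetilde\Orth^+(N)$, $K$ a stabilizer, and $H=\Gamma_h^+$; there one applies the standard finite-group algorithms of \cite[Sec.~8.1.1]{Seress}.
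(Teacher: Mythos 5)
Your well-definedness and surjectivity arguments are fine and follow the same basic route as the paper (push a decomposition forward along the quotient map $\pi$). But your fibre analysis contains a genuine error, and it is exactly the point where the real content of the lemma lives. You claim that $\pi^{-1}\bigl(\overline{K}\,\overline{y}\,\overline{H}\bigr) = (KyH)U$ is ``in general a union of several'' $(K,H)$-double cosets. It is not: since $U$ is normal and $U\subset K$ one has
\begin{equation*}
KyHU \;=\; KyUH \;=\; KUyH \;=\; KyH,
\end{equation*}
so the preimage of each double coset of $G/U$ is a \emph{single} double coset of $G$. Equivalently, each $KyH$ is saturated under $U$ (a union of left $U$-cosets), so distinct double cosets upstairs have disjoint images downstairs; the induced map on double cosets is a bijection, not a map with nontrivial fibres. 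This is precisely what the paper's proof establishes (``the double coset $Kg_iH$ is actually a union of left $U$ cosets, therefore the double cosets $K/U\,\overline{g_i}\,\overline{H}$ are disjoint''), and it is indispensable for the application: orbit splitting for $\Gamma_h$ is reduced to a double-coset computation in the finite quotient only because the decomposition of $G$ can be \emph{recovered} from that of $G/U$, which fails if several double cosets upstairs could collapse to one downstairs.

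The ``many-to-one'' in the statement refers to the element-level map $x\mapsto\overline{x}$ on representatives (each $\overline{g_i}$ has $|U|$ preimages, so many choices of representatives upstairs induce the same decomposition downstairs), not to the induced map on double cosets. So you should delete the claim about fibres containing several double cosets and replace it with the saturation argument above; as written, your justification of ``many-to-one'' rests on a false statement and, taken at face value, would contradict the way the lemma is used in Section 3.5.
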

\begin{proof}
Let us take a double coset decomposition
\begin{equation*}
G = \cup_{i\in I} K g_i H
\end{equation*}
Then mapping to the quotient we obtain
\begin{equation*}
G/U = \cup_{i\in I} K/U \overline{g_i} \overline{H}.
\end{equation*}
Since $U \subset K$, the double coset $K g_i H$ is actually a union of left $U$ cosets. Therefore the double cosets
$C_i = K/U \overline{g_i} \overline{H}$ are disjoint, i.e., $C_i\cap C_j = \emptyset$ if $i\not= j$.
This establishes that the mapping is well defined and it is surjective by construction.
\end{proof}

We can apply the above lemma to our case with $U=\widetilde\Orth(N)$ the kernel of the action of $\Orth(N)$ on the discriminant
$K = \Gamma_h$ and $H = G_x$.
The second key ingredient is that the quotient $\Orth(N) / \widetilde\Orth(N) \cong \Orth^+(\FF_2)$ is finite. We can apply the
existing approach for finite groups, as implemented in \cite{gap}, and thus reduce the orbit splitting from $\Orth(N)$ to the
group $\Gamma_h$.

This approach allows us to compute the number of orbits of lines, planes and flags, and thus the Tits building, for each subgroup $\Gamma_h$.
The obtained data on the orbit splitting, i.e. the number of isotropic lines and planes as well as inclusions are given in Tables \ref{table_subgroups1} and \ref{table_subgroups2}
where they are labeled by $\# I_1$, $\# I_2$ and $\# I_{12}$ respectively.
The pictures of the first $8$ coset graphs are given in Figure \ref{FirstEightGraphs}.
We note that Case 1 coincides with the Tits building found in \cite[Fig 14]{Ste}.

\begin{figure}
\centering
\begin{minipage}[b]{4.1cm}
\centering
\epsfig{figure=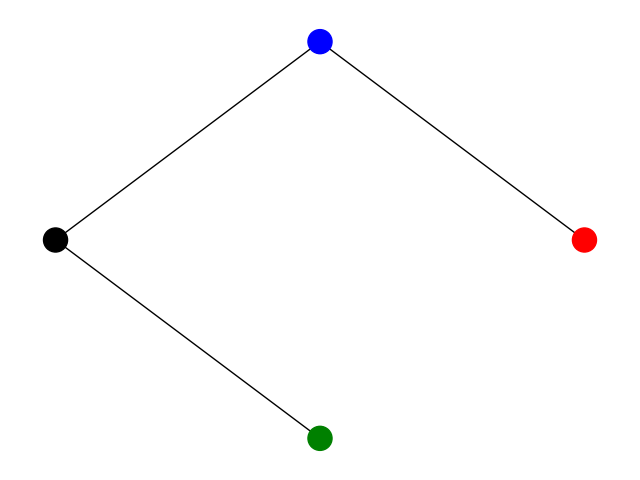,width=4.1cm}
Full group
\end{minipage}
\begin{minipage}[b]{4.1cm}
\centering
\epsfig{figure=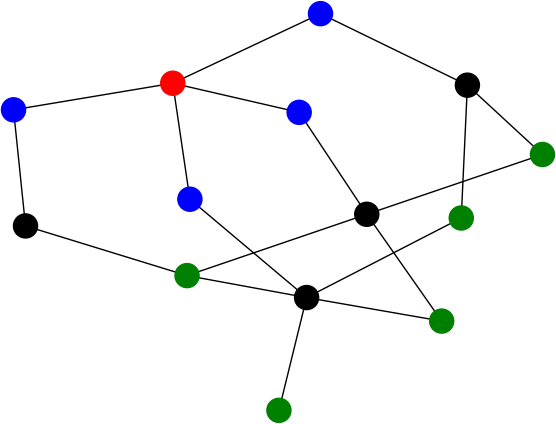,width=4.1cm}
Case 1
\end{minipage}
\begin{minipage}[b]{4.1cm}
\centering
\epsfig{figure=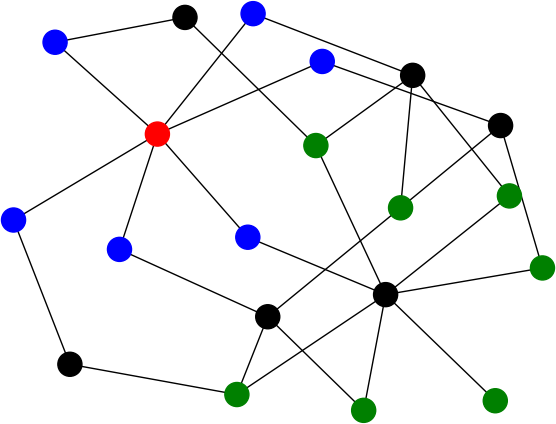,width=4.1cm}
Case 2
\end{minipage}
\begin{minipage}[b]{4.1cm}
\centering
\epsfig{figure=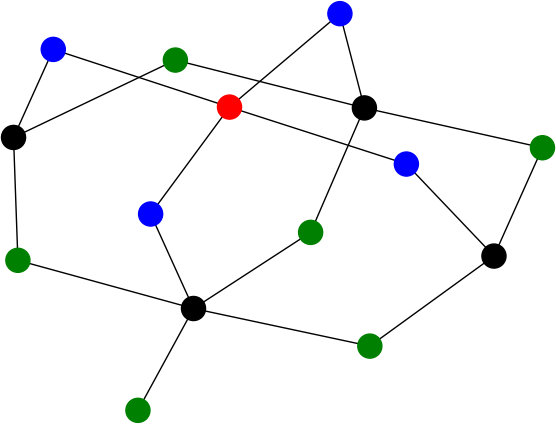,width=4.1cm}
Case 3
\end{minipage}
\begin{minipage}[b]{4.1cm}
\centering
\epsfig{figure=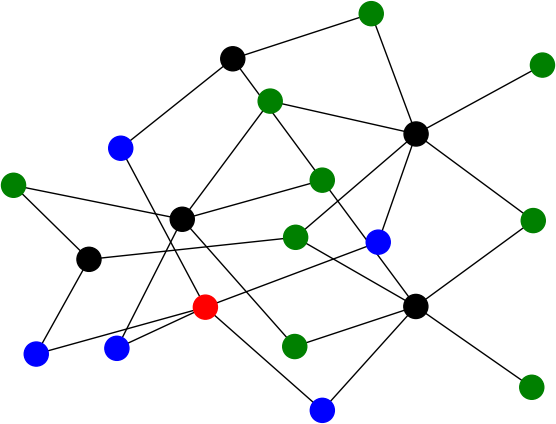,width=4.1cm}
Case 4
\end{minipage}
\begin{minipage}[b]{4.1cm}
\centering
\epsfig{figure=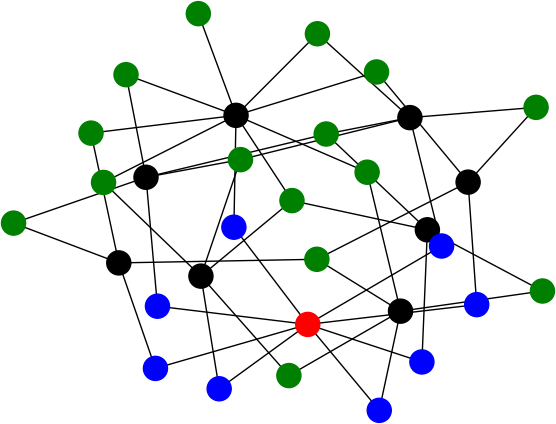,width=4.1cm}
Case 5
\end{minipage}
\begin{minipage}[b]{4.1cm}
\centering
\epsfig{figure=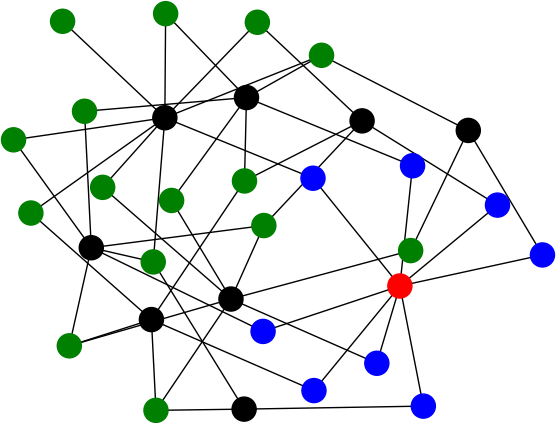,width=4.1cm}
Case 6
\end{minipage}
\begin{minipage}[b]{4.1cm}
\centering
\epsfig{figure=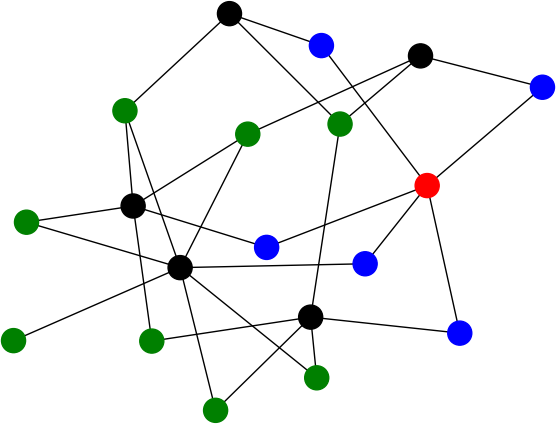,width=4.1cm}
Case 7
\end{minipage}
\begin{minipage}[b]{4.1cm}
\centering
\epsfig{figure=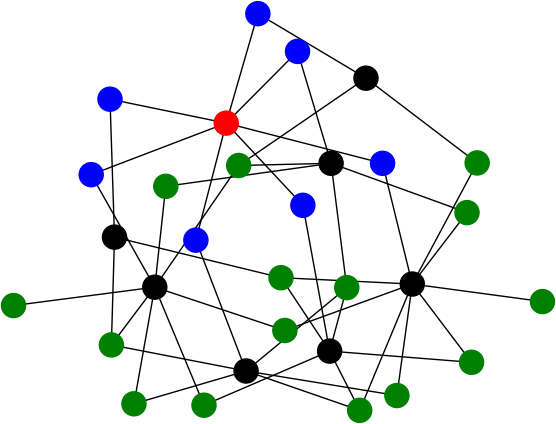,width=4.1cm}
Case 8
\end{minipage}

\caption{Coset graphs of the first $8$ coset graphs from Table \ref{table_subgroups1}.
The red, blue, black and green dots correspond to the orbit arising from the orbit of $L_1$, $P_1$, $L_2$ and $P_2$.}
\label{FirstEightGraphs}
\end{figure}

\subsection{Classical cases}\label{subsec:classical}
Here we briefly discuss how our computations fit in with some classical results.

\subsubsection{The degree $2$ case}
There is only one degree $2$ polarization $h$, namely $h = g_2$. This linear system is not base point free,
which follows e.g. because $\phi=1$, i.e. there is an elliptic curve on which $h$ has degree $1$. The linear system $|2h|$, however, is base-point free
and gives rise to what is classically known a double plane representation. More precisely, $|2h|$ maps a general
Enriques surface $2:1$ onto a del Pezzo surface in $\PP^4$ which is the intersection of two rank $3$ quadrics, see \cite[Section 3.3]{BaPe}, \cite[Section 3.5]{CDL}. By
\cite[Theorem 3.9]{BaPe} a generic Enriques surface admits  $2^7\cdot 7 \cdot 31$ different double plane representations. This implies that $[\Orth^+(N):\Gamma_h]=2^7\cdot 7 \cdot 31$ or alternatively,
that
\begin{equation*}
[\Gamma_h: \tilde\Orth^+(N)]= 2^{21} \cdot 3^5 \cdot 5^2 \cdot 7 \cdot 17 \cdot 31/ 2^7 \cdot 7 \cdot 31= 2^{14} \cdot 3^5 \cdot 5^2 \cdot 7 \cdot 31.
\end{equation*}
This is Case 1 in Table \ref{table_subgroups1}.

We note that this case was also treated by Sterk \cite{Ste} who referred to degree $2$ polarizations as almost polarizations. There it is  also
proved, see \cite[Section 4.4]{Ste}, that the corresponding modular variety has five $0$-dimensional and nine $1$-dimensional cusps, in
agreement with our results in Table \ref{table_subgroups1}. As we already mentioned above, the Tits building computed by Sterk \cite[Fig 14]{Ste} agrees with our graph for Case 1.

\subsubsection{Enriques realizations}

In degree $6$ we have two polarizations, which are distinguished by the $\phi$-invariant which can be $1$ or $2$. These are given by $g_1 + g_2$ and $g_3$ respectively.
In the first case the linear system is not base-point free, in the other case it defines, for a generic Enriques surface, a birational map onto a non-normal sextic surface
in $\PP^3$ with double locus along the edges of a tetrahedron, see \cite[Section 3.1]{BaPe}, \cite[Section 3.5]{CDL}. This is historically the first realization of an Enriques surface.
By \cite[Theorem 3.10]{BaPe} a general Enriques surface $S$
admits $2^{11} \cdot 5 \cdot 17 \cdot 31$ such realizations. Note, however, that $h$ and $h+K_S$ define projectively inequivalent models. For us this means that the morphism
$\cM_{\En,h} \to {\cM}_{\En}$ has degree $2^{10} \cdot 5 \cdot 17 \cdot 31$ and hence
\begin{equation*}
[\Gamma_h: \tilde\Orth^+(N)]= 2^{21} \cdot 3^5 \cdot 5^2 \cdot 7 \cdot 17 \cdot 31 / 2^{10} \cdot 5 \cdot 17 \cdot 31= 2^{11} \cdot 3^5 \cdot 5 \cdot 7
\end{equation*}
agreeing with Case 4 in Table \ref{table_subgroups1}.

\subsubsection{Reye congruences}

It follows from Table \ref{ListVectorsNormAtMost30} that we have $3$ different polarizations in degree $10$, namely $4g_1 + g_2$, $g_1 + g_4$ and $g_5$.
According to Table \ref{Expression_Isotropic_Vectors_Part1}  these have $\phi$-invariants $1$, $2$ and $3$ respectively. In the first case
the linear system $|h|$ is not base-point free, in the second it cannot be ample. In the third case the linear system $|h|$ defines an embedding for the general Enriques surface $S$ and thus a degree $10$ model in $\PP^5$,
see also \cite[Section 3]{BaPe}, \cite[Section 3.5]{CDL}.
This is classically known as a Reye congruence, or, according to \cite{CDL} as a Fano model.  By  \cite[Theorem 3.11]{BaPe} a general Enriques surface admits $2^{14} \cdot 3 \cdot 17 \cdot 31$ inequivalent
representations as a degree $10$ surface in $\PP^5$. Since $|h|$ and $|h+K_S|$ define different models we can conclude that  the morphism
$\cM_{\En,h} \to {\cM}_{\En}$ has degree $2^{13} \cdot 3 \cdot 17 \cdot 31$ and hence
\begin{equation*}
[\Gamma_h: \tilde\Orth^+(N)]= 2^{21} \cdot 3^5 \cdot 5^2 \cdot 7 \cdot 17 \cdot 31 / 2^{13} \cdot 3 \cdot 17 \cdot 31= 2^{8} \cdot 3^4 \cdot 5^2 \cdot 7
\end{equation*}
which agrees exactly with Case 7 in Table \ref{table_subgroups1}.

\section{Algorithms for working with indefinite forms}\label{Sec_Algorithms}

The methods used in this work are, we believe, of wider interest and thus we explain in this section
in some detail
how they work. The code is available via \cite{Polyhedral} in both a GAP and C++ version.
The emphasis is on practical techniques.
In this section, a lattice is a free $\ZZ$-module of rank $n$, which will often be identified with $\ZZ^n$,
equipped with an integer quadratic form $A$ which can possibly be degenerate.

The first class of problems are related to groups. That is, given an integer quadratic form $A$, we want to
compute a generating set of the integral automorphism group $\Orth(A)$. Next, we want to test the equivalence
of two integral quadratic forms $A_1$ and $A_2$ by an integral transformation and, if such an isomorphism exists,
produce it explicitly.

The second class of problems considered concerns vector representations.
That is, given an integer quadratic form $A$ and an integer $\beta\not= 0$, we ask to find all orbit representatives of
solutions $x$ of the equation $A[x] := x A x^T = \beta$. For $\beta = 0$ we are looking for primitive solutions.
We are also interested in finding $k$-planes of totally isotropic vectors.

As it turns out, both classes are closely related in our algorithmic approach. In Subsection \ref{subsection_int_group_algo}
we explain the group techniques used.  We then discuss the case of positive and hyperbolic lattices, for which
well known algorithms exist, in Subsection \ref{subsection_posdef_hyperbolic}.
In Subsection \ref{subsection_approximatemodels} we introduce the notion of approximate model of a lattice and,
finally, we show in Subsection \ref{subsection_solutionsproblems} how all techniques
together allow us to solve the above problems.

\subsection{Integral group algorithms}\label{subsection_int_group_algo}
The matrix groups $G \subset \GL_n(\QQ)$ that we will consider will be in general infinite
and will preserve a rank $n$ lattice $L\subset \ZZ^n$. In particular, this implies that $G\cap \GL_n(\ZZ)$
is a finite index subgroup in $G$.
We will need an algorithmic solution for the following problems:
\begin{itemize}
\item[{\bf Alg 1}] Compute a generating set of the intersection $G \cap \GL_n(\ZZ)$.
\item[{\bf Alg 2}] For $x \in \GL_n(\QQ)$ decide whether there is some $g\in G$ such that $g x\in \GL_n(\ZZ)$ and compute one such $g$.
\item[{\bf Alg 3}] Compute the right cosets of $G\cap \GL_n(\ZZ)$ in $G$.
\end{itemize}
Without the condition that a lattice $L$ is preserved by $G$ there is no reason to think there is a general algorithm as the groups are just too wild.
We will limit our exposition to {\bf Alg 1}. The other algorithms use the same ideas and are suitable adaptations
to the relevant context.

Let us take $L$ an integral rank $n$ lattice invariant under $G$ and denote by $L'$ the lattice $\ZZ^n$.
Obviously, we have $L\subset L'$ and there exists an integer $d>0$ such that $L \subset L' \subset L / d$.
When expressed in a basis of $L$ the group $G$ becomes an integral subgroup of $\GL_n(\ZZ)$.
By quotienting by $dL$ we obtain a map $\phi: G \mapsto \GL_n(\ZZ / d \ZZ)$ mapping the lattice $L'$ to a subset $S$
 of $(\ZZ /d \ZZ)^n$
 and the problem can be rephrased as first finding the stabilizer of $S$ under $\Im\,\phi$ and then computing
its pre-image in $G$.

The group $\GL_n(\ZZ/d\ZZ)$ is a finite group and finding set-stabilizers is a well
known problem with efficient algorithms \cite{leon1,leon2,jefferson}.
To find the pre-image of a group, the natural way is to use  Schreier's lemma
\cite[Lemma 4.2.1]{Seress}.
If the group $G$ is finite and has a faithful permutation representation
on a set $W$, then we can amalgamate the set-stabilizer and pre-image operations
in just one set-stabilizer
operation on a finite permutation group acting on $|W| + d^n$ points.

Because of its practical importance, it is essential to accelerate this algorithm
as much as possible. A possible speed-up is to use the factorization of the divisor $d$
into prime factors as $d = p_1\dots p_r$ and to iterate the computation prime by prime,
starting by the smallest occurring prime.
Another speed-up is not to consider the full set $(\ZZ / d \ZZ)^n$ of vectors, but instead
to select a vector $x$ in $S$ whose orbit $O_x$ is not contained in $S$. Then we compute
 the stabilizer for $S' = O_x\cap S$.
In this way $d^n$ is reduced to $\left\vert O_x \right\vert$ which is much smaller. Of course,
some additional iterations may be needed for the stabilizer to be computed since there could
be other vectors in $S$ whose orbit is contained in $S$.
If we know that a filtration is preserved by $G$, then it is good to start
the search of such $x$ in the smallest subspaces. The commonality between all these
approaches is that they replace a big computation with a dominating term $d^n$ into
smaller computations though at the expense of having many.
This algorithm is an evolution of the last one of \cite[Section 3.1]{BDPRS} where the
problem of finding the group of integral symmetries of a polytope was considered.
The GAP and the independent C++ version of the code are available at \cite{Polyhedral}.

\subsection{Positive definite and hyperbolic forms}\label{subsection_posdef_hyperbolic}
For positive definite quadratic forms there are well known methods \cite{plesken} for the equivalence
problem and for computing a generating set.
For the problem of finding representative solutions of $A[x] = \beta$, we can use the
Fincke-Pohst algorithm (cf. \cite[Algorithm~2.7.7]{cohen}).

For the case of hyperbolic lattices this becomes more involved, but is still doable using
the method of perfect forms. This is an inefficient technique, but it has the advantage that there are no limitations
regarding its use.
In this work we have used the Coxeter group structure for the lattice $U + E_8(-1)$.
This is possible because it is a reflective lattice, but most lattices do not have this property and so the perfect form method has to be used.

The enumeration of perfect forms is done via a variant of Mertens' algorithm \cite{Mertens}.
The main changes are an improvement in the way the facets of the perfect domain are enumerated
up to symmetry (see \cite{highlysymmetric} for a description of the algorithm and
\cite{Polyhedral} for implementations) and the use of the method of \ref{subsection_int_group_algo}
for finding automorphisms and testing isomorphisms of perfect domains.

\subsection{Approximate models and the case of signature $p,q\geq 2$}\label{subsection_approximatemodels}

Having dealt with definite forms and hyperbolic lattices, we now turn to signature $(p,q)$ with $p,q \geq 2$.
The definition below provides the main tool for our work:

\begin{definition}
Given an integral lattice $L$, an {\em approximate model} is defined by:
\begin{itemize}
\item A set of generators $\{g_1, \dots, g_m\}$ of a subgroup $\Ap(L)$
of $\Orth(L)$ named {\em approximate subgroup}.
\item An {\em oracle function} $\Ap(L,\beta)$ that, given a $\beta \not=0$, returns
a finite list $v_1, \dots, v_{k(\beta)}$ such that any vector of norm $\beta$
is equivalent by $\Ap(L)$ to one of the $v_i$.
For $\beta = 0$ the oracle function returns a list of primitive vectors of norm $0$ such that
any primitive vector of norm $0$ is equivalent to one such vector by an element of $\Ap(L)$.
\end{itemize}
\end{definition}
It is important to note that a lattice can potentially have an infinite number of approximate models
and that we do not claim that every lattice has an approximate model. The approximate subgroup is
a finite index subgroup of $\Orth(L)$ in all cases considered, here but we do not know if that
is always the case and we do not use this property here.

\begin{lemma}\label{sublattice_approx_model}
If $L$ is an integral non-degenerate even lattice, then $U + U + L$ has an approximate model.
\end{lemma}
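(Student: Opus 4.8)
The plan is to exhibit an explicit approximate model built from Eichler transvections. Write $M:=U_1\oplus U_2\oplus L$ with $U_i=\ZZ e_i+\ZZ f_i$, $e_i^2=f_i^2=0$, $(e_i,f_i)=1$, and note that $M$ is even, so the relevant $\beta$ may be taken even. For a primitive isotropic $e\in M$ and $a\in e^{\perp}$ recall the Eichler transvection
$$t_{e,a}(x)=x-(x,e)\,a+\bigl((x,a)-\tfrac12(a,a)(x,e)\bigr)e,$$
an isometry of $M$ acting trivially on $D(M)=M^{\vee}/M$, with $t_{e,e}=\id$, so that for fixed $e$ the $t_{e,a}$ depend only on $a\in e^{\perp}/\ZZ e$, a lattice of finite rank. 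I would take $\Ap(M)$ to be the subgroup of $\Orth(M)$ generated by the finitely many transvections $t_{e_1,a}$ and $t_{e_2,a}$, where $a$ runs over lifts of a $\ZZ$-basis of $e_1^{\perp}/\ZZ e_1$, resp.\ $e_2^{\perp}/\ZZ e_2$. This is a finite, completely explicit generating set, and $\Ap(M)$ lies in the Eichler group $E(M)\subseteq\widetilde\Orth(M)$. (Should tracking exactly which transvections generate $E(M)$ prove awkward, one can instead set $\Ap(M)=\widetilde\Orth(M)$, which is finitely generated as an arithmetic subgroup of $\Orth(M\otimes\RR)$; nothing below changes.)

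For the oracle I would invoke Eichler's criterion: since $M$ contains $U$ as an orthogonal summand, $\widetilde\Orth(M)$ — and, in the classical results, already $E(M)$ — acts transitively on the set of primitive vectors $v\in M$ with prescribed norm $v^2=\beta$, prescribed divisibility $\div(v)$, and prescribed class $v/\div(v)\in D(M)$. For fixed $\beta\neq 0$ there are only finitely many admissible triples $(\beta,\div(v),v/\div(v))$, because $\div(v)^2\mid\beta$ and $D(M)$ is finite; moreover any vector of norm $\beta$ is $m v_0$ with $m\geq 1$, $m^2\mid\beta$, and $v_0$ primitive of norm $\beta/m^2$, and $\Ap(M)$-equivalence of $m v_0$ and $m v_0'$ is equivalent to that of $v_0$ and $v_0'$. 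So $\Ap(M,\beta)$ should return, for each $m$ with $m^2\mid\beta$ and each orbit of primitive vectors of norm $\beta/m^2$, the vector $m v_0$ for one chosen representative $v_0$; this is a finite list with the required completeness. For $\beta=0$ the same reasoning applied to isotropic $v$ (the invariants now being $\div(v)$ and the isotropic class $v/\div(v)\in D(M)$) yields finitely many orbits of primitive isotropic vectors, of which one lists one representative each — e.g.\ $e_1$ realises the divisibility-$1$, class-$0$ isotropic orbit, just as $e_1+\tfrac{\beta}{2}f_1$ realises the divisibility-$1$, class-$0$ orbit of norm $\beta$ when $\beta\neq 0$.

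The step I expect to require the most care is the precise invocation of Eichler's criterion in exactly the form needed — transitivity for $E(M)$ (not merely for $\widetilde\Orth(M)$) on primitive vectors of fixed norm, divisibility and discriminant class — together with the accompanying statement that transvections along a basis of $e_1^{\perp}$ and $e_2^{\perp}$ already generate $E(M)$. Both are classical and both fall out of the same explicit reduction: one uses $t_{e_1,\cdot}$ and $t_{f_1,\cdot}$, and then the second hyperbolic plane via $t_{e_2,\cdot}$ and $t_{f_2,\cdot}$, to bring an arbitrary $v$ to a normal form depending only on $(\beta,\div(v),v/\div(v))$, and that very reduction exhibits the generators. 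A secondary issue is effectivity of the oracle — deciding which triples occur and producing a representative — which reduces to solving quadratic equations in the indefinite lattice $M$ and is algorithmically feasible; but for the existence statement of the lemma one only needs that the finite orbit set, hence a choice of representatives, is well defined. If one prefers, all of these concerns vanish at once by taking $\Ap(M)=\widetilde\Orth(M)$ from the start, at the cost of a less explicit model.
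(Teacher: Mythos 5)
Your overall route is the same as the paper's: invoke Eichler's criterion for the oracle function and take a finitely generated group of Eichler transvections as the approximate subgroup. But your proposed generating set is genuinely too small. For any $a\in e_1^{\perp}$ one has $t_{e_1,a}(e_1)=e_1$ and, since $(e_1,e_2)=0$, $t_{e_1,a}(e_2)=e_2+(e_2,a)e_1$; symmetrically for $t_{e_2,b}$. Hence every generator you list preserves the totally isotropic plane $\ZZ e_1+\ZZ e_2$, and so does the group they generate. Such a group can never map $e_1$ to $f_1$, although these are both primitive isotropic vectors of divisibility $1$ and trivial discriminant class and therefore must lie in one orbit of any group for which Eichler's criterion holds. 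So the claim that the $t_{e_1,\cdot}$ and $t_{e_2,\cdot}$ alone generate $E(M)$ is false, and the transitivity statement you need for the oracle fails for your $\Ap(M)$. The fix is exactly what your own sketch of the reduction already uses: include the transvections based at all four canonical isotropic vectors $e_1,f_1,e_2,f_2$ (the paper, following Scattone's Proposition 3.7.3, takes these $4(n-1)$ transvections together with the four generators of $\SL_2(\ZZ)\times\SL_2(\ZZ)\subset\Orth(U+U)$, giving $4n$ generators in all).

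Your fallback $\Ap(M)=\widetilde\Orth(M)$ does salvage the bare existence statement (finite generation holds for arithmetic groups, and Eichler transitivity holds a fortiori for the larger group), but it defeats the purpose of the definition in this paper: the approximate subgroup must come with generators one can actually write down, since {\bf Alg 1}--{\bf Alg 3} and the stabilizer computations downstream all consume an explicit generating set. The remainder of your argument --- finiteness of the admissible invariants $(\beta,\div(v),v/\div(v))$, reduction of non-primitive vectors to primitive ones by scaling, and the treatment of $\beta=0$ --- is fine and matches what the paper extracts from the proof of Scattone's proposition.
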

\begin{proof}
Eichler's criterion \cite[\S 3]{eichler}
applies to this class of lattices
and provides an algorithm for obtaining the approximate orbit representatives.
We need to prove that we have a finite set of generators of a suitable
approximate subgroup. In this case we can take a suitable subgroup of the group $\Orth(U+U)$ together with the Eichler transvections.
In \cite[Example 3.7.2]{scattone} the lattice $U+U$ is identified with the determinant form on $M_{2,2}(\ZZ)$.
Thus $\SL_2(\ZZ)$ has an action on the left and an action on the right on $M_{2,2}(\ZZ)$.
In particular, $\SL_2(\ZZ) \times \SL_2(\ZZ)$ is a subgroup of $\Orth(U + U)$.
Since $\SL_2(\ZZ)$ is generated by $2$ elements this subgroup is generated by $4$ generators.

The second step of Eichler's criterion is to apply the Eichler transvections
$E_{e_i, x}$ (see \cite[Section 3.7]{scattone})
for $e_i$ with $1\leq i\leq 4$ one of the $4$ canonical isotropic vectors coming from the two hyperbolic planes $U$
and $x$ a vector orthogonal to $e_i$.
The Eichler transvections satisfy $E_{e, x}E_{e,y} = E_{e, x+y}$ for $e$ isotropic
and $x,y$ orthogonal to $e$.
According to \cite[Proposition 3.7.3]{scattone} we simply need the generators of $\SL_2(\ZZ)^2$ and the transvections $E_{e_i,v_{i,j}}$
with $1\leq i\leq 4$, $1\leq j\leq n-1$ and $(v_{i,j})_{1\leq j\leq n-1}$ forming a $\ZZ$-basis of $e_i^{\perp}$.
Thus, if the dimension of $U+U+L$ is $n$, we need $4$ generators from $\SL_2(\ZZ)^2$ and $4(n-1)$ from the transvections and so $4n$
together.
The proof of Proposition 3.7.3 in \cite{scattone} provides an explicit way of computing a set of possible vector representatives
and so the oracle function.
\end{proof}

It is important to note that the approximate model provided by the above lemma can be improved significantly in some cases.
The group provided by the Eichler algorithm acts trivially on the discriminant.
For a case such as $U + U + E_8(-2)$ this gets us $2^8$ orbit representatives. By adding
the isometries of the $E_8$ component to the approximate subgroup, we are reduced to just
$3$ representatives which is far better for computational purposes. This is because $W(E_8)$
has $3$ orbits in its action on $E_8 / 2 E_8$, their sizes being $1$, $120$ and $135$.

\begin{theorem}
Suppose $L'$ and $L$ are two integral lattices of rank $n$ with $L' \subset L$ and we have an
approximate model for $L$. Then we have an approximate model for $L'$.
\end{theorem}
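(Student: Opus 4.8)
The plan is to obtain the approximate model for $L'$ by cutting $\Ap(L)$ down to the subgroup preserving $L'$ and by post‑processing the oracle output of $L$. Since $L'\subseteq L$ has full rank $n$, the index $[L:L']$ is finite, so fix an integer $d>0$ with $dL\subseteq L'\subseteq L$. Define
\[
\Ap(L') := \{\, g\in\Ap(L)\ :\ g(L')=L' \,\}.
\]
Because every $g\in\Ap(L)$ already fixes $dL$ (it lies in $\Orth(L)$), the condition $g(L')=L'$ is equivalent to $g$ fixing the subgroup $L'/dL$ of the finite group $L/dL$; hence $\Ap(L')$ is the preimage of a point–stabilizer inside the finite image of $\Ap(L)$ in $\GL_n(\ZZ/d\ZZ)$, so it has finite index in $\Ap(L)$, and each of its elements restricts to an isometry of $L'$, giving $\Ap(L')\subseteq\Orth(L')$. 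A finite generating set of $\Ap(L')$, together with a finite set of representatives $h_1,\dots,h_m\in\Ap(L)$ for the cosets $\Ap(L')h_j$, is produced by exactly the reduction‑modulo‑$d$, set‑stabilizer and Schreier computations of Subsection~\ref{subsection_int_group_algo} (the template of {\bf Alg 1} and {\bf Alg 3}).

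It remains to build the oracle. Fix $\beta\neq 0$ and let $v_1,\dots,v_k$ be the list returned by $\Ap(L,\beta)$. I claim the finite list $\{\, h_j(v_i) \,\}_{i,j}\cap L'$ is a valid output of $\Ap(L',\beta)$. Indeed, let $v\in L'$ have norm $\beta$; viewing $v$ as a vector of $L$, choose $g\in\Ap(L)$ and an index $i$ with $v=g(v_i)$, and write $g=a h_j$ with $a\in\Ap(L')$. Then $v=a\big(h_j(v_i)\big)$, so $h_j(v_i)=a^{-1}(v)\in L'$ and $v$ is $\Ap(L')$–equivalent to the list element $h_j(v_i)$.

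The case $\beta=0$ needs one extra observation, since a vector primitive in $L'$ need not be primitive in $L$. Suppose $v\in L'$ is primitive in $L'$ and write $v=cw$ with $w\in L$ primitive in $L$ and $c\ge 1$. Then $\ZZ w\cap L'=\ZZ v$ — a smaller positive multiple of $w$ lying in $L'$ would make $v$ imprimitive in $L'$ — so $c=\big|(\ZZ w+L')/L'\big|$ divides $[L:L']$. Now let $w_1,\dots,w_k$ be the list returned by $\Ap(L,0)$, and take for $\Ap(L',0)$ the finite list of those vectors among $\{\, c\,h_j(w_i)\ :\ c\mid[L:L'],\ i,\ j \,\}$ that lie in $L'$ and are primitive in $L'$. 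Given a primitive isotropic $v\in L'$, write $v=cw$ as above with $c\mid[L:L']$; pick $g\in\Ap(L)$ and $i$ with $w=g(w_i)$, and write $g=ah_j$ with $a\in\Ap(L')$; then $v=a\big(c\,h_j(w_i)\big)$ and $c\,h_j(w_i)=a^{-1}(v)$ lies in $L'$ and is primitive there, so $v$ is $\Ap(L')$–equivalent to a list element.

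I expect the only genuinely delicate point to be the $\beta=0$ case just treated, where the imprimitivity factor $c$ has to be bounded, which it is by $c\mid[L:L']$. The group‑theoretic ingredient — computing a finite generating set of the finite‑index subgroup $\Ap(L')$ and coset representatives — is nontrivial in practice but is precisely the algorithm of Subsection~\ref{subsection_int_group_algo}; everything else is bookkeeping. Combined with Lemma~\ref{sublattice_approx_model}, this yields approximate models for the lattices actually needed here, e.g. $U+U(2)+E_8(-2)\subset U+U+E_8(-2)$.
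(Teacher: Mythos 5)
Your proof follows exactly the paper's argument: take $\Ap(L')$ to be the stabilizer of $L'$ in $\Ap(L)$ (computed via \textbf{Alg 1}), take coset representatives of this stabilizer in $\Ap(L)$ via \textbf{Alg 3}, and obtain the oracle for $L'$ by intersecting the translated oracle list $\{h_j(v_i)\}$ with $L'$. Your additional treatment of the $\beta=0$ case --- where a vector primitive in $L'$ may fail to be primitive in $L$, the discrepancy being a factor $c$ dividing $[L:L']$ --- is a correct refinement of a point the paper's proof passes over silently.
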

\begin{proof}
We can compute the stabilizer $S$ of $L'$ under $\Ap(L)$ by {\bf Alg 1} and this gets us an approximate subgroup $\Ap(L')$.
By using {\bf Alg 3} we compute the right coset decomposition of $\Ap(L)$ under $S$ with
coset representatives $g_1$, \dots, $g_m$.
For $\beta\in \ZZ$,
the approximate model of $L$ gives us representatives $x_1$, \dots, $x_t$ of the orbits of vectors of norm $\beta$.
We then considers all the elements of the form $g_j x_i$ and keep the ones that are contained in $L'$.
This gets us our approximate orbit representatives $\Ap(L',\beta)$.
\end{proof}

In particular, the above shows that any lattice $c U + dU + W$ with $c,d\in \NN$ and $W$ integral and even
has an approximate model
via the following embedding in $U + U + W$:
\begin{equation*}
(x_1, x_2, y_1, y_2, w) \mapsto (c x_1, x_2, d y_1, y_2, w).
\end{equation*}

In fact much more is true:
\begin{theorem}\label{Existence_ApproximateModel}
Let $L$ be an integral lattice of signature $p,q\geq 2$ of dimension at least $7$.
Then $L$ has an approximate model.
\end{theorem}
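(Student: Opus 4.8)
The plan is to realise the lattice of the statement — I call it $\Lambda$, of signature $(p,q)$ with $p,q\ge 2$ and $\operatorname{rank}\Lambda=p+q\ge 7$, to avoid a clash with the symbols $L,L'$ above — as a finite-index sublattice of a lattice of the form $U+U+W$, after an innocuous rescaling. First I would note that it is enough to produce an approximate model for \emph{some} rescaling $\Lambda(m)$ with $m\ge 1$: rescaling leaves the orthogonal group unchanged, a vector has norm $\beta$ for $\Lambda$ precisely when it has norm $m\beta$ for $\Lambda(m)$, and primitive isotropic vectors and isotropic planes are the same for both, so an approximate model of $\Lambda(m)$ transfers verbatim to $\Lambda$ by using the oracle $\beta\mapsto\Ap(\Lambda(m),m\beta)$. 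Combining this with Lemma~\ref{sublattice_approx_model} and the finite-index sublattice theorem proved just above, it suffices to find an even non-degenerate lattice $W$ and an integer $k\ge 1$ such that $\Lambda(k^2)$ sits in $U+U+W$ as a finite-index sublattice.

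The heart of the proof is that $\Lambda\otimes\QQ$ has Witt index at least $2$. As $\Lambda\otimes\QQ$ is indefinite of rank $\ge 7$, it is isotropic over $\QQ$ by Meyer's theorem (indefinite rational quadratic forms in at least five variables represent zero), so $\Lambda\otimes\QQ\cong H\oplus Q_1$, where $H$ is a hyperbolic plane over $\QQ$ and $Q_1$ has rank $p+q-2\ge 5$ and signature $(p-1,q-1)$, hence is still indefinite because $p,q\ge 2$; applying Meyer's theorem once more to $Q_1$ yields $Q_1\cong H\oplus Q_0$ and therefore $\Lambda\otimes\QQ\cong H\oplus H\oplus Q_0$ for a rational quadratic form $Q_0$ of rank $p+q-4$ and signature $(p-2,q-2)$. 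This double use of Meyer's theorem is exactly where the hypotheses $\operatorname{rank}\Lambda\ge 7$ and $p,q\ge 2$ come in. I would then pick any integral lattice $L_0$ with $L_0\otimes\QQ\cong Q_0$ (diagonalise $Q_0$ as $\langle a_1,\dots\rangle$ with the $a_i\in\ZZ\setminus\{0\}$) and replace it by its even sublattice $W:=\{x\in L_0: x\cdot x\in 2\ZZ\}$, which has index at most $2$ in $L_0$; then $W$ is even and non-degenerate with $W\otimes\QQ\cong Q_0$, so Lemma~\ref{sublattice_approx_model} applies to $U+U+W$.

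To conclude, $(U+U+W)\otimes\QQ\cong H\oplus H\oplus Q_0\cong\Lambda\otimes\QQ$, so there is a rational isometry $\psi:\Lambda\otimes\QQ\isoto(U+U+W)\otimes\QQ$. The image $\psi(\Lambda)$ and the lattice $U+U+W$ are two full-rank lattices in a common rational quadratic space, hence commensurable, so there is a positive integer $k$ with $k\,\psi(\Lambda)\subseteq U+U+W$, and this inclusion is of finite index; moreover $v\mapsto k\,\psi(v)$ is an isometry $\Lambda(k^2)\isoto k\,\psi(\Lambda)$. By Lemma~\ref{sublattice_approx_model} the lattice $U+U+W$ has an approximate model; by the finite-index sublattice theorem its sublattice $k\,\psi(\Lambda)\cong\Lambda(k^2)$ has one; and by the reduction in the first paragraph $\Lambda$ has one. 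The only genuinely substantive ingredient is the Witt-index estimate, and the main thing to get right is that the numerical hypotheses make Meyer's theorem applicable \emph{twice}; the rescaling bookkeeping, the passage to an even sublattice, and the commensurability argument are all routine.
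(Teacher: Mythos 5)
Your proof is correct and follows essentially the same route as the paper's: both split off two rational hyperbolic planes by applying Meyer's theorem twice (using $p,q\ge 2$ and rank $\ge 7$ in exactly the same way), rescale, and then invoke Lemma~\ref{sublattice_approx_model} together with the finite-index-sublattice theorem. The only difference is cosmetic: the paper constructs the isotropic vectors explicitly in the dual lattice $L^{\vee}$ and obtains $L\subset U(c)+U(d)+K^{\vee}$ directly, whereas you work in $\Lambda\otimes\QQ$ and pass to a finite-index sublattice of $U+U+W$ by commensurability --- routine bookkeeping around the same core argument.
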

\begin{proof}
Let us take the dual $L^{\vee}$. Since integral indefinite lattices of dimension at least $5$ have isotropic vectors (see \cite{meyer})
there is an isotropic vector $v_1$ in $L^{\vee}$.
Let us take a vector $g$ not orthogonal to $v_1$. Then the vector $v_2 = 2 (g.v_1) g - (g.g) v_1$ is
also isotropic and not orthogonal to $v_1$.

We then iterate this operation on $L^{\vee} \cap (\ZZ v_1 + \ZZ v_2)^{\perp}$, where this notation indicates that the orthogonal complement is taken in $L^{\vee}$,
and find two isotropic vectors
$v_3$, $v_4$. We define $K = L^{\vee} \cap (\ZZ v_1 + \ZZ v_2 + \ZZ v_3 + \ZZ v_4)^{\perp}$ and
taking the dual we  obtain
\begin{equation*}
L \subset U(c) + U(d) + K^{\vee}   \mbox{~for~some~} c,d\in \QQ_{+}.
\end{equation*}
By multiplying by a factor $\alpha$ we can obtain that $\alpha c$, $\alpha d$ are integers and
that $K^{\vee}(\alpha)$ is an even integral lattice. Rescaling a lattice leaves its property of having
an approximate model invariant.

Finally we have the embedding
\begin{equation*}
U(\alpha c) + U(\alpha d) + K^{\vee}(\alpha) \subset U + U + K^{\vee}(\alpha)
\end{equation*}
and we can conclude from Lemma \ref{sublattice_approx_model} that $L$ has an approximate model.
\end{proof}

Lemma \ref{sublattice_approx_model} provides an approximate model for lattices of the form $U + U + L$
with $L$ integral even. The lattices that we are going to consider are not necessarily even nor admit
a decomposition $U+U+L$ but we can find an approximate model for them.

The above existence theorem is not necessarily optimal in the sense that the obtained oracle function
may get us a large numbers of possible solutions.
In our application we are in the fortunate situation that the lattice $U + U(2) + E_8(-2)$
can be trivially embedded into $U + U + E_8(-2)$ by our previous remark and so no additional work is needed.
For finding the isotropic vectors we use the algorithm of \cite{simon} implemented in \cite{pari}.

\subsection{Solution of the problems}\label{subsection_solutionsproblems}

We now use approximate models to solve the equivalence/automorphism and
representative problems that we explained at the beginning of this section.
The solutions that we provide are effective in the sense that they can be
computed on computers, but we do not make any claim on complexity, though
runtime is clearly one of our priorities.

For a lattice $L$ of signature $(p,q)$ we define $s(L) = \min(p,q)$.
For an integral lattice $L$ a {\em splitting integer} is a $\beta\in \ZZ \setminus \{0\}$
such that there exists
a vector $v$ of norm $\beta$ with $v^{\perp}$ a lattice satisfying $s(v^{\perp}) = s(L) - 1$.
Clearly, such a number exists if $s(L) \geq 1$.
We also define $r(L) = \max(p,q)$.

\begin{theorem}\label{Solution_Aut_Equi}
There exist algorithms solving the equivalence and automorphism
group problems for integral non-degenerate lattices with $r(L) \geq 5$.
\end{theorem}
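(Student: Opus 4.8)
The plan is to induct on $s(L) = \min(p,q)$, repeatedly passing to the orthogonal complement of a vector of suitable (``splitting'') norm so as to lower $s(L)$ while keeping $r(L) = \max(p,q) \geq 5$ fixed. If $s(L) = 0$ the lattice is definite and the classical algorithms recalled in Subsection~\ref{subsection_posdef_hyperbolic} (e.g.\ \cite{plesken}) solve both problems; if $s(L) = 1$ the lattice is hyperbolic of rank $r(L) + 1 \geq 6$ and the perfect form method of Subsection~\ref{subsection_posdef_hyperbolic} applies. These are the base cases of the induction.

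Now suppose $s(L) \geq 2$. Then $\dim L = r(L) + s(L) \geq 7$ and $p, q \geq 2$, so Theorem~\ref{Existence_ApproximateModel} provides an approximate model of $L$, i.e.\ an explicit finite generating set of a subgroup $\Ap(L) \subseteq \Orth(L)$ together with the oracle $\Ap(L,\cdot)$. Fix a splitting integer $\beta$ for $L$ (which exists since $s(L)\geq 1$) and apply the oracle to obtain vectors $v_1,\dots,v_k$ of norm $\beta$ meeting every $\Ap(L)$-orbit of norm-$\beta$ vectors. Since $s(v^{\perp})$ is an isometry invariant we may assume $s(v_1^{\perp}) = s(L) - 1$; as the reduction only decreases the smaller of $p,q$, the non-degenerate lattice $v_1^{\perp}$ satisfies $r(v_1^{\perp}) = r(L) \geq 5$, so by the inductive hypothesis we can compute $\Orth(v_1^{\perp})$ and decide isometry of lattices of that type.

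It remains to assemble the two algorithms from this data using lattice gluing. An isometry of $L$ fixing $v_1$ restricts to an isometry of $v_1^{\perp}$ that is compatible with the glue data of the finite-index inclusion $\ZZ v_1 \oplus v_1^{\perp} \subseteq L$, and conversely such isometries lift; combining Nikulin's gluing calculus \cite{Nik} with the computed group $\Orth(v_1^{\perp})$ and \textbf{Alg~1} (to cut down to the stabilizer on the discriminant side) yields generators of $\Orth(L)_{v_1}$. For each $j$ one tests, by comparing the primitive embeddings $v_j^{\perp} \hookrightarrow L$ and $v_1^{\perp} \hookrightarrow L$ via \cite{Nik} and the inductive equivalence routine, whether $v_j$ lies in $\Orth(L) v_1$, and if so produces an explicit $c_j \in \Orth(L)$ with $c_j(v_1) = v_j$. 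Then $\Orth(L)$ is generated by the generators of $\Ap(L)$, those of $\Orth(L)_{v_1}$, and the $c_j$: given $g \in \Orth(L)$, some $a \in \Ap(L)$ sends $g(v_1)$ to one of the $v_j$, which is therefore $\Orth(L)$-equivalent to $v_1$, whence $c_j^{-1} a g \in \Orth(L)_{v_1}$. For the equivalence of $L_1$ and $L_2$ one first checks that their signatures (equivalently $r$ and $s$) agree; if $s = 0$ or $1$ one uses the base case, and otherwise one picks a splitting integer $\beta$ with vector $v \in L_1$, enumerates the finitely many $\Ap(L_2)$-representatives $w_1,\dots,w_m$ of norm-$\beta$ vectors in $L_2$ via the oracle, and for each $w_t$ decides inductively whether $v^{\perp} \hookrightarrow L_1$ and $w_t^{\perp} \hookrightarrow L_2$ are isometric as embedded lattices, gluing a positive answer to an isometry $L_1 \to L_2$; the lattices are isometric if and only if some $w_t$ works.

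The main obstacle is the gluing step: recovering $\Orth(L)_v$, and isometries $L_1 \to L_2$, from isometry data on orthogonal complements is not merely a question of $\Orth(v^{\perp})$ --- one must keep track of the induced action on the discriminant forms and of the overlattice glue maps in the style of Nikulin, make this effective, and interleave it correctly with the finite-group computations of \textbf{Alg~1} and \textbf{Alg~3} so that the intermediate computations stay feasible. Termination of the induction is immediate, since $s(L)$ strictly decreases at each step while $r(L) \geq 5$ is preserved, and the base cases $s(L) \in \{0,1\}$ are algorithmic.
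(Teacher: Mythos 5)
Your proposal follows essentially the same route as the paper: induction on $s(L)=\min(p,q)$ with the definite and hyperbolic cases as base, an approximate model and a splitting integer $\beta$ to pass to $v^{\perp}$ with $s(v^{\perp})=s(L)-1$, the inductive computation of $\Orth(v^{\perp})$ and of equivalences, and then \textbf{Alg 1}/\textbf{Alg 2} applied to the finite-index sublattice $\ZZ v \oplus v^{\perp}\subseteq L$ to obtain the stabilizer of $v$ and to fuse the oracle's orbit representatives. The only difference is presentational: you phrase the lifting/fusion step in the language of Nikulin's discriminant-form gluing where the paper works directly with the set-stabilizer and coset algorithms on the sublattice (the same computation in different clothing), and your explicit generation argument, which includes the generators of $\Ap(L)$ alongside those of $\Orth(L)_{v_1}$ and the connecting elements $c_j$, is if anything slightly more careful than the paper's.
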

\begin{proof}
The solution to those problems depends on each other, which is why they are stated together.
\begin{itemize}
\item {\bf Orth(s)}: The problem of determining a generating set of automorphism groups
 for non-degenerate lattices $L$ with $s(L) = s$.
\item {\bf Equi(s)}: Given two non-degenerate lattices $L_1$, $L_2$ with $s(L_1) = s(L_2) = s$
test whether they are isomorphic and if isomorphic find an isomorphism.
\end{itemize}
For $s(L) = 0$ or $1$ Subsection \ref{subsection_posdef_hyperbolic} provides
algorithms. Our solution is inductive in $s$. Since in the sequel we will have $s\geq 2$,
the condition of dimension at least $7$ required by Theorem \ref{Existence_ApproximateModel} is
satisfied.

If we can solve {\bf Orth(s-1)} and {\bf Equi(s-1)} then we can solve {\bf Orth(s)}.
To see this, let us take a lattice $L$ with $s(L) = s$ and $\beta$ a splitting integer.
Let us choose an approximate model $\Ap(L)$ of $L$. The oracle function will provide a set of vectors
$\Ap(L, \beta) = \{v_1, \dots, v_m\}$.
The lattice $v_1^{\perp}$ has $s(v_1^{\perp}) = s-1$. Therefore, by {\bf Orth(s-1)}
we can find $\Orth(v_1^{\perp})$.
For $v\in L$ define the sublattice  $L_v = v^{\perp} + \ZZ v$ of $L$.
The group $\Orth(v_1^{\perp})$ is naturally embedded as a subgroup $G$ of $\Orth(L_{v_1})$ by sending $v_1$ to $v_1$.
We want to determine the subgroup $H$ of $G$ that preserves $L$.
Since $L_{v_1}$ is a finite index sublattice of $L$ this can be done by applying {\bf Alg 1}.
Now we need to determine which transformations could map $v_1$ to one of $v_2$, \dots, $v_m$.
If $v_1$ is equivalent to some $v_i$ then $v_1^{\perp}$ is equivalent to $v_i^{\perp}$.
This can be tested using {\bf Equi(s-1)}. We get a corresponding map $\phi$ from $L_{v_1}$ to $L_{v_i}$.
Then by applying {\bf Alg 2} to $G$ and $\phi$ we can test whether there exists a map from $L$
to $L$ mapping $v_1$ to $v_i$. By taking those transformations when they exist and a generating
set of $H$ we  actually find a generating set of $\Orth(L)$.

If we can solve {\bf Orth(s-1)} and {\bf Equi(s-1)} then we can solve {\bf Equi(s)}.
Let us take two lattices $L$ and $L'$ with $s(L) = s(L') = s$ and $\beta$
a splitting integer of $L$. We can assume $\beta$ is a splitting integer of $L'$ since otherwise they are not equivalent.
Take a vector $v$ of norm $\beta$ in $L$ and an approximate list
$\{v'_1, \dots, v'_m\}$ of representatives in $L'$.
We compute the automorphism group $\Orth(v^{\perp})$ using {\bf Orth(s-1)} and then
the corresponding subgroup $G$ of $\Orth(L_v)$.
We simply iterate over the $v'_i$, form the lattices $v^{\perp}$ and ${v'_i}^{\perp}$
and check if there is an isomorphism using {\bf Equi(s-1)}. If there is an isomorphism $h$
we extend it to an isomorphism of $L_v$ to $L'_{v'_i}$. Then we use {\bf Alg 2} with $h$ and $G$
to check if we can obtain an isomorphism of $L$ to $L'$ mapping $v$ to $v'_i$.
If at some point we find an equivalence, then we conclude that $L$ and $L'$ are equivalent.
If not then the lattices are not.

By the work done for hyperbolic lattices, we have the solution for {\bf Orth(1)} and
{\bf Equi(1)}. Therefore,  we have the solution of {\bf Orth(s)} and {\bf Equi(s)}
for any $s\geq 2$.
\end{proof}

We next show that the assumption that $L$ be non-degenerate is actually not necessary.

\begin{theorem}\label{Solution_Aut_Equi_degenerate}
There exist algorithms for solving the equivalence and the automorphism problems for
integral lattices with $r(L) \geq 5$.
\end{theorem}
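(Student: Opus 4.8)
The plan is to peel off the radical and reduce to the non-degenerate case handled by Theorem~\ref{Solution_Aut_Equi}. Given an integral lattice $L$ of rank $n$ with Gram matrix $G$, the first step is to compute the \emph{radical} $L_0 := \{x\in L \mid x\cdot L = 0\}$, i.e.\ the kernel of the natural map $L\to L^{\vee}$; this is a routine Smith-normal-form computation, and $L_0$ is a primitive (saturated) sublattice, so $\bar L := L/L_0$ is a free $\ZZ$-module carrying an induced integral form which is non-degenerate by construction. Extending a $\ZZ$-basis of $L_0$ to a basis of $L$ gives a splitting $L = L_0 \oplus \bar L$ of $\ZZ$-modules under which, since the form vanishes on $L_0$, the form on $L$ is simply the pullback of the form on $\bar L$ along the projection $L\to\bar L$. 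The zero eigenvalues of $G$ all come from $L_0$, so $\bar L$ has the same signature $(p,q)$ as $L$; in particular $r(\bar L) = r(L)\geq 5$, so Theorem~\ref{Solution_Aut_Equi} applies to $\bar L$.

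The structural point is that the radical is intrinsic, so every isometry of $L$ preserves $L_0$ and is therefore represented, in the above splitting, by a block matrix $\left(\begin{smallmatrix} A & B \\ 0 & C\end{smallmatrix}\right)$ with $A\in\GL(L_0)\cong\GL_{r_0}(\ZZ)$ (where $r_0 = \rank L_0$), $C\in\Orth(\bar L)$, and $B\in\Hom(\bar L, L_0)$ arbitrary; conversely, every such matrix is an isometry of $L$, because the form only sees the $\bar L$-component and $C$ preserves that. Hence $\Orth(L) \cong \Hom(\bar L, L_0)\rtimes\big(\GL(L_0)\times\Orth(\bar L)\big)$, the semidirect product being taken with respect to the natural action. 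To produce a generating set of $\Orth(L)$ one then amalgamates three generating sets, each embedded into $\GL(L)$ by letting it act as the identity on the remaining factors: the standard generators of $\GL_{r_0}(\ZZ)$, the finitely many elementary homomorphisms spanning $\Hom(\bar L, L_0)\cong\ZZ^{\,r_0\cdot\rank\bar L}$, and a generating set of $\Orth(\bar L)$ supplied by Theorem~\ref{Solution_Aut_Equi}.

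For the equivalence problem the same analysis shows that two degenerate integral lattices $L$ and $L'$ are isometric if and only if $\rank L_0 = \rank L_0'$ and $\bar L\cong\bar L'$: the forward implication is immediate, and for the converse one combines an isometry $\bar L\to\bar L'$ obtained from Theorem~\ref{Solution_Aut_Equi} with any $\ZZ$-module isomorphism $L_0\to L_0'$ (which exists exactly because the radicals have equal rank) to write down an explicit isometry $L\to L'$ through the two splittings. The only nontrivial ingredients are thus the radical computation, the solution of these problems for the non-degenerate quotient (Theorem~\ref{Solution_Aut_Equi}), and the well-known presentation of $\GL_n(\ZZ)$; the one place requiring care is purely bookkeeping, namely verifying that the block description of $\Orth(L)$ is exact — that the lower-left block must vanish and that $A,B$ are unconstrained beyond integrality and invertibility of $A$ — and then assembling generators and the explicit isomorphism consistently with the chosen splittings.
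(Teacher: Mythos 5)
Your proposal is correct and follows essentially the same route as the paper: split off the radical $L_0$ (the paper's $\ker(A)$), apply Theorem~\ref{Solution_Aut_Equi} to the non-degenerate quotient, and observe that $\Orth(L)$ is the semidirect product of $\Hom(\bar L,L_0)\cong(\ZZ^{n-k})^k$ with $\GL_k(\ZZ)\times\Orth(\bar L)$, which also settles the equivalence problem. The paper's proof is terser; you have merely filled in the block-matrix bookkeeping it leaves implicit.
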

\begin{proof}
If we equip $\ZZ^n$ with a degenerate quadratic form $A$, then we can still compute the
automorphism group of this lattice. To see this, we first notice that the integral kernel $\ker(A)$ has to be preserved.
The group $\GL(\ker(A))$ is isomorphic to $\GL_k(\ZZ)$ with $k = \dim\ker(A)$.
We can always find a submodule $L'$ of $\ZZ^n$ such that $A$ restricted to $L'$ is non-degenerate and
$\ZZ^n = \ker(A) + L'$. We compute the automorphism group of $A$ restricted to $L'$
by using Theorem \ref{Solution_Aut_Equi}. Then the group $\Orth(L)$ is isomorphic to
\begin{equation*}
\GL_k(\ZZ) \rtimes \{(\ZZ^{n-k})^k \rtimes \Orth(L')\}
\end{equation*}
and so we can easily get a generating set of that group.
This method also works for isomorphism checks.
\end{proof}

\begin{lemma}\label{lemma_isotropic}
If $L$ is a lattice and ${v}$ a non-zero isotropic vector in $L$ then any automorphism of ${v}^{\perp}$
extends uniquely to an automorphism of $L\otimes \QQ$.
\end{lemma}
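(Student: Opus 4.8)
The plan is to pass to the rational vector space $V:=L\tens\QQ$ carrying the $\QQ$-bilinear extension of the form, and to prove the statement there by elementary linear algebra. First I would note that an automorphism of the lattice $v^{\perp}=\{x\in L:(x,v)=0\}$ extends uniquely $\QQ$-linearly to an isometry of $v^{\perp}\tens\QQ$, and that this rational space is exactly $V_{0}:=\{x\in V:(x,v)=0\}$, the orthogonal complement of $v$ in $V$ (if $y\in V$ satisfies $(y,v)=0$, then $ny\in L$ for some $n\neq0$ and $ny\in v^{\perp}$, so $y$ lies in the $\QQ$-span of $v^{\perp}$). Hence it suffices to show that every isometry $\phi$ of $V_{0}$ has a unique extension to an isometry of $V$. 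We may assume $V$ is non-degenerate — if $v$ lies in the radical of $V$ then $V_{0}=V$ and the statement is immediate.

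Since $v\neq 0$ is isotropic and $V$ is non-degenerate, I would pick $w\in V$ with $(v,w)=1$ and, replacing $w$ by $w-\tfrac12(w,w)\,v$, arrange in addition $(w,w)=0$; then $H:=\QQ v\oplus\QQ w$ is a hyperbolic plane, so $V=H\perp H^{\perp}$ with $H^{\perp}$ non-degenerate and $V_{0}=\QQ v\perp H^{\perp}$. The key structural point is that $\QQ v$ is precisely the radical of $V_{0}$; consequently any isometry $\phi$ of $V_{0}$ preserves it, so $\phi(v)=\lambda v$ for some $\lambda\in\QQ^{\times}$, and $\phi$ carries $H^{\perp}$ onto some (a priori different) complement of $\QQ v$ in $V_{0}$.

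Now I would pin down the extension. Any isometry $\psi$ of $V$ with $\psi|_{V_{0}}=\phi$ is determined on $V_{0}$ and must send $w$ to $av+bw+h$ with $a,b\in\QQ$, $h\in H^{\perp}$; I would then read off $a,b,h$ one after another from the isometry relations. From $(\psi w,\psi v)=(w,v)=1$ one gets $b\lambda=1$, so $b=1/\lambda$. From $(\psi w,\phi x)=(w,x)=0$ for all $x\in H^{\perp}$, writing $\phi x=\mu_{x}v+h_{x}$ with $h_{x}\in H^{\perp}$, one gets $(h,h_{x})=-b\mu_{x}$; since $x\mapsto h_{x}$ is a linear automorphism of $H^{\perp}$ (it is $\phi|_{H^{\perp}}$ followed by the projection $V_{0}\to V_{0}/\QQ v\cong H^{\perp}$) and $H^{\perp}$ is non-degenerate, this determines $h$ uniquely. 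Finally $(\psi w,\psi w)=(w,w)=0$ reduces to $2ab+(h,h)=0$, giving $a=-\tfrac12\lambda(h,h)$. This proves uniqueness; conversely, the $\psi$ defined by these formulas sends the basis $\{v\}\cup(\text{basis of }H^{\perp})\cup\{w\}$ to a basis, hence is bijective, and checking the pairwise products on the spanning set $V_{0}\cup\{w\}$ (those inside $V_{0}$ being covered by $\phi\in\Orth(V_{0})$) shows $\psi\in\Orth(V)$, giving existence.

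The argument is essentially routine, and the one point worth highlighting — the reason the hypothesis that $v$ be isotropic is indispensable — is the last step: the coefficient of $a$ in $(\psi w,\psi w)$ is $2b(v,w)=2/\lambda\neq0$ while the $a^{2}(v,v)$ term vanishes, so the equation for $a$ is \emph{linear} and has a unique solution; for a vector with $(v,v)\neq0$ it would be a genuine quadratic with two roots, and uniqueness of the extension would fail. Isotropy is also what places $v$ in the radical of $V_{0}$, which forces $\phi$ to merely rescale it. So the only real obstacle is this bookkeeping together with the preliminary identification of $v^{\perp}\tens\QQ$ with $\{x\in V:(x,v)=0\}$.
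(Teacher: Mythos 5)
Your proof is correct and is essentially the paper's argument written out in explicit coordinates: both determine the image of a single complementary vector ($w$, resp.\ the paper's $u$) up to a multiple of $v$ from its pairings with $v^{\perp}$, and then fix that multiple by the norm condition, which is linear precisely because $v$ is isotropic. The only presentational differences are your explicit hyperbolic splitting $V=H\perp H^{\perp}$ and your (welcome) attention to the identification $v^{\perp}\otimes\QQ=\{x\in V:(x,v)=0\}$ and to non-degeneracy, which the paper's proof also tacitly assumes when it asserts $(v^{\perp})^{\perp}=\QQ v$.
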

\begin{proof}
If $L$ is of dimension $n$ then $H = {v}^{\perp}$ is $(n-1)$-dimensional. Let $g$ be an isometry of $H$.
We want to extend this to an isometry of $L\otimes \QQ$.
We select a vector $u$ not in $H$ which gives the condition
\begin{equation*}
x.g(w) = u.w \mbox{~~for~~} w\in H\mbox{~~and~~} x = g(u).
\end{equation*}
This is an affine system for the unknown $x$. The kernel corresponds to the vectors orthogonal to $g(w)$ for $w\in H$.
Since $g$ is an automorphism of $H$ this means that the kernel is $H^{\perp} = \QQ {v}$.
Let us take a basis $h_1$, \dots, $h_{n-1}$ of $H$. The system becomes equivalent to
\begin{equation*}
x.g(h_i) = u.h_i \mbox{~~for~~} 1\leq i\leq n-1.
\end{equation*}
Since the linear system has $n$ unknowns and $n-1$ equations a solution $x = u'$ exists by the rank theorem.
Since $(u, h_1, \dots, h_{n-1})$ is of full rank, $(u', g(h_1), \dots, g(h_{n-1}))$ is also of full rank
and thus $u'\notin H$.

Thus we can write $g(u) = u' + C {v}$ for some $C\in \QQ$.
The equation $g(u).g(u) = u.u$ is expressed as $u.u = u'.u' + 2 C {v}.u'$.
We have $u'.{v} \not= 0$ because $u'\notin H$. Thus a unique solution $C$ exists.
\end{proof}

\begin{theorem}\label{subsection_beta_norm_vectors}
There exists an algorithm for computing orbit representatives of vectors of given
norm $\beta\in \RR^*$ for integral non-degenerate lattices with $r(L) \geq 6$ and $s(L) \geq 2$.
For $\beta = 0$ the algorithm gives the orbit representatives of primitive vectors.
\end{theorem}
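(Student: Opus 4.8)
The plan is to imitate the inductive argument used in the proof of Theorem~\ref{Solution_Aut_Equi}: use an approximate model to reduce to a finite list of candidate vectors, and then merge those candidates that lie in the same $\Orth(L)$-orbit.

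First I would observe that $r(L)\geq 6$ and $s(L)\geq 2$ force $\dim L=r(L)+s(L)\geq 8\geq 7$, so Theorem~\ref{Existence_ApproximateModel} supplies an approximate model $(\Ap(L),\Ap(L,-))$ of $L$. Evaluating the oracle function at $\beta$ produces a finite list $\Ap(L,\beta)=\{v_1,\dots,v_m\}$ such that every vector of norm $\beta$ (every primitive isotropic vector if $\beta=0$) is $\Ap(L)$-equivalent to some $v_i$. Since $\Ap(L)\subseteq\Orth(L)$, this list already meets every $\Orth(L)$-orbit, and it only remains to decide, for each pair $i,j$, whether $v_i$ and $v_j$ lie in the same $\Orth(L)$-orbit, and then to keep one representative of each resulting class. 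As the $v_i$ are primitive when $\beta=0$ and $\Orth(L)$ preserves primitivity, the output has the required form.

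For the merging step, fix $v_i,v_j$ and set $L_{v}:=v^{\perp}+\ZZ v$ as in the proof of Theorem~\ref{Solution_Aut_Equi}. If $\beta\neq 0$ then $v^{\perp}$ is non-degenerate, of signature $(p-1,q)$ or $(p,q-1)$ according to the sign of $\beta$, so $r(v^{\perp})\geq r(L)-1\geq 5$; hence by Theorem~\ref{Solution_Aut_Equi} we can compute $\Orth(v_i^{\perp})$ and test whether $v_i^{\perp}\cong v_j^{\perp}$, and if they are not isomorphic then $v_i\not\sim v_j$. If $\psi\colon v_i^{\perp}\isoto v_j^{\perp}$ is an isometry, extend it to $\phi\colon L_{v_i}\to L_{v_j}$ by $v_i\mapsto v_j$ using the orthogonal splitting $L_{v}=v^{\perp}\oplus\ZZ v$, embed $\Orth(v_i^{\perp})$ as the subgroup $G$ of automorphisms of $L_{v_i}$ fixing $v_i$, and apply {\bf Alg 2} to $G$ and $\phi$: this decides whether some $\phi\circ h$, $h\in G$, extends to an element of $\Orth(L)$, which happens exactly when $v_i\sim_{\Orth(L)}v_j$. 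This is verbatim the mechanism already used for {\bf Orth(s)} in Theorem~\ref{Solution_Aut_Equi}, so no new ingredient is needed. When $\beta=0$ the same scheme applies with two modifications: $v^{\perp}$ is now degenerate with radical $\QQ v\cap L$ but still satisfies $r\geq r(L)-1\geq 5$, so its automorphism group and the isomorphism tests are handled by Theorem~\ref{Solution_Aut_Equi_degenerate}; and the extension of an isometry of $v^{\perp}$ across $L$ is governed by Lemma~\ref{lemma_isotropic}, which produces a unique extension to $L\otimes\QQ$ and hence a well-defined rational transformation to feed into {\bf Alg 2}.

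The step I expect to be the genuine obstacle is the $\beta=0$ case: one must keep in mind that equivalence of the degenerate lattices $v_i^{\perp}$ and $v_j^{\perp}$ is only necessary, and that the actual test is whether the (unique, by Lemma~\ref{lemma_isotropic}) rational extension of some $\phi\circ h$ carries $L$ isometrically onto itself — which is precisely what {\bf Alg 2} checks, once one verifies that this extension sends $L_{v_i}$ into a lattice commensurable with $L$ so that {\bf Alg 2} is applicable. Termination is not an issue: $\Ap(L,\beta)$ is finite by the definition of an approximate model, and the subroutines {\bf Alg 1}--{\bf Alg 3}, Theorem~\ref{Solution_Aut_Equi} and Theorem~\ref{Solution_Aut_Equi_degenerate} all terminate. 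Finally, the hypothesis $r(L)\geq 6$ (rather than $\geq 5$) enters exactly here: it guarantees that passing to $v^{\perp}$ still leaves $r\geq 5$, keeping us inside the range of the already-established automorphism and equivalence algorithms.
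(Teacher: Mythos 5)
Your proposal is correct and follows essentially the same route as the paper: an approximate model (whose existence is guaranteed since $\dim L\geq 8$) yields a finite candidate list, and the candidates are merged by testing equivalence of the orthogonal complements via Theorem~\ref{Solution_Aut_Equi} (for $\beta\neq 0$) or Theorem~\ref{Solution_Aut_Equi_degenerate} together with Lemma~\ref{lemma_isotropic} (for $\beta=0$), followed by \textbf{Alg 2} to decide integral extendability. Your discussion of why $r(L)\geq 6$ is needed and of the subtlety in the isotropic case matches, and indeed slightly amplifies, the paper's argument.
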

\begin{proof}
Let us take a lattice $L$ of dimension $n$ with $s(L) = s \geq 2$.
We first use an approximate model of $L$ in order to compute an approximate
list of representatives $\{v_1, \dots, v_m\}$. The orthogonal lattice
$v^{\perp}$ satisfies $r(v^{\perp})\geq 5$ and so we can apply Theorem
\ref{Solution_Aut_Equi} to the class of lattices $v_i^{\perp}$.

If $\beta\not= 0$ then the strategy of Theorem \ref{Solution_Aut_Equi} works
to test equivalence and so reduces the approximate list to an exact list.

If $\beta = 0$ then $v_1^{\perp}$ is a lattice of dimension $n-1$ that contains
$v_1$. Thus the lattice $v_1^{\perp}$ is degenerate.
By using Theorem \ref{Solution_Aut_Equi_degenerate} we can test for isomorphisms
among the lattices $v_i^{\perp}$.
By Lemma \ref{lemma_isotropic} those isomorphisms can be lifted to isomorphisms
of the associated $\QQ$-vector spaces, and by {\bf Alg 2} we can actually
check if an integral isomorphism can be obtained.
In this way we can decide which of the $v_i$ are isomorphic.
\end{proof}

In order to compute the Tits building we must also deal with isotropic planes. For this reason we now
turn more generally to higher-dimensional isotropic $k$-planes where the situation is considerably more
complicated.
\begin{theorem}\label{theorem_isotropic_k_planes}
Let $k \geq 1$ be an integer and $L$ an indefinite non-degenerate lattice.

(i) Given an isotropic $k$-plane $Is$, we can compute the stabilizer $\Stab(L,Is)$ of $Is$ in the isometry group $\Orth(L)$
of $L$. We can also compute a finite set $(g_i)_{1\leq i\leq m}$ of elements of $\Orth(\Isperp)$ such that
\begin{equation*}
\Orth(\Isperp) = \cup_{i=1}^m g_i \Stab(L,Is)_{\Isperp}
\end{equation*}
with $\Stab(L,Is)_{\Isperp}$ the restriction of $\Stab(L,Is)$ to $\IsperpB$.

(ii) Given two isotropic $k$-planes $Is_1$ and $Is_2$, we can test whether there is an isometry of $L$ mapping $Is_1$ to $Is_2$.
\end{theorem}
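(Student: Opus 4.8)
The plan is to prove (i) and (ii) simultaneously by induction on the dimension $k$ of the isotropic plane, bootstrapping the two assertions off each other as in the proof of Theorem~\ref{Solution_Aut_Equi}. For the base case $k=1$ I would write $Is=\langle v\rangle$ with $v\in L$ a primitive isotropic vector. Then $\IsperpB=v^{\perp}$ is a degenerate lattice of rank $\dim L-1$ whose radical is $\ZZ v$, so $\Orth(\IsperpB)$ is computable by Theorem~\ref{Solution_Aut_Equi_degenerate}; by Lemma~\ref{lemma_isotropic} every element of $\Orth(v^{\perp})$ extends uniquely to an isometry of $L\otimes\QQ$ preserving $\langle v\rangle$, and applying \textbf{Alg 2} I would keep those extensions (allowing also the sign $v\mapsto -v$) that are integral. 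This yields $\Stab(L,Is)$, realises $\Stab(L,Is)_{\IsperpB}$ as a finite-index subgroup of $\Orth(\IsperpB)$, and a transversal $(g_i)$ is then obtained by the coset routine \textbf{Alg 3}, carried out in the finite quotient of $\Orth(\IsperpB)$ by a convenient principal congruence subgroup. Assertion (ii) for $k=1$ is exactly the $\Orth(L)$-equivalence test for primitive isotropic vectors already contained in the proof of Theorem~\ref{subsection_beta_norm_vectors}.

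For the inductive step of (i), given an isotropic $k$-plane $Is\subset L$ with $k\ge 2$, I would choose a primitive isotropic vector $v\in Is\cap L$ and pass to the non-degenerate lattice $\bar L:=v^{\perp}/\langle v\rangle$, with $r(\bar L)=r(L)-1$ and $s(\bar L)=s(L)-1$, so that the objects stay within the range of Section~\ref{subsection_solutionsproblems}; here $\overline{Is}:=Is/\langle v\rangle$ is an isotropic $(k-1)$-plane in $\bar L$. Every $g\in\Stab(L,v)$ descends to $\bar g\in\Orth(\bar L)$, and one checks that $g$ stabilises $Is$ precisely when $\bar g$ stabilises $\overline{Is}$; hence the flag stabiliser $H_v:=\Stab(L,Is)\cap\Stab(L,v)$ is the preimage, under $\Stab(L,v)\to\Orth(\bar L)$, of the group $\Stab(\bar L,\overline{Is})$ supplied by the inductive hypothesis~(i). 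I would compute this preimage from lifts of a generating set of $\Stab(\bar L,\overline{Is})$ together with generators of the kernel of $\Stab(L,v)\to\Orth(\bar L)$, which is the finitely generated group of Eichler transvections $E_{v,x}$ with $x\in v^{\perp}$.

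To pass from the flag stabiliser $H_v$ to all of $\Stab(L,Is)$, I would control the action $\gamma\colon\Stab(L,Is)\to\GL_k(\ZZ)$ on the rank-$k$ lattice $Is\cap L$. Since $\gamma^{-1}(\Stab_{\GL_k(\ZZ)}(v))=H_v$, the group $\Stab(L,Is)$ is generated by $H_v$ together with lifts of a generating set of $\operatorname{im}\gamma$. I would obtain such lifts as Eichler-type isometries $E_{w,x}$ with $w\in Is\cap L$ and $x\in w^{\perp}$, whose $\gamma$-images are elementary matrices; in the indefinite situations of interest these already generate all of $\GL_k(\ZZ)$, so $\operatorname{im}\gamma=\GL_k(\ZZ)$, while in general $\operatorname{im}\gamma$ is a finite-index subgroup that can be identified using the finite presentation of $\GL_k(\ZZ)$. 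Once $\Stab(L,Is)$ is known, the transversal $(g_i)$ of (i) is produced exactly as in the base case: $\IsperpB$ is again degenerate with radical $Is$, $\Orth(\IsperpB)$ is computable by Theorem~\ref{Solution_Aut_Equi_degenerate}, the image of $\Stab(L,Is)$ in it has finite index, and a transversal is returned by \textbf{Alg 3}.

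For the inductive step of (ii), given isotropic $k$-planes $Is_1,Is_2$, I would fix a primitive isotropic vector $v_1\in Is_1\cap L$. Any isometry carrying $Is_1$ onto $Is_2$ sends $v_1$ to a primitive vector of $Is_2\cap L$, on which $\Stab(L,Is_2)$ acts through the finite-index subgroup $\operatorname{im}\gamma\subset\GL_k(\ZZ)$, hence with only finitely many orbits; it therefore suffices to test, for each of the finitely many orbit representatives $w$, whether some isometry of $L$ carries $v_1$ to $w$ and $Is_1$ to $Is_2$. Using Theorem~\ref{subsection_beta_norm_vectors} I would first decide whether $v_1$ and $w$ are $\Orth(L)$-equivalent; if so, after translating $Is_1$ by an isometry sending $v_1$ to $w$, both planes contain $w$, and passing to $\bar L=w^{\perp}/\langle w\rangle$ the problem becomes whether the two isotropic $(k-1)$-planes $\overline{Is_1},\overline{Is_2}$ are equivalent under the finite-index subgroup $\operatorname{im}(\Stab(L,w)\to\Orth(\bar L))$ of $\Orth(\bar L)$. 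That last question I would settle by combining the inductive hypothesis~(ii) for $(k-1)$-planes with the inductive hypothesis~(i) (to compute $\Stab(\bar L,\overline{Is_2})$) through the double-coset/orbit-splitting reduction of Section~\ref{subsec:isotropic}. The main obstacle throughout is the bookkeeping around the $\GL_k(\ZZ)$-action of $\Stab(L,Is)$ on the plane itself: one must produce enough explicit Eichler-type isometries to realise and recognise $\operatorname{im}\gamma$, and one must repeatedly track which isometries of the rank-reduced lattice $\bar L$ genuinely lift to $L$, for which the coset data of (i) and Lemma~\ref{lemma_isotropic} are the essential tools.
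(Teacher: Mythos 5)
Your proposal takes a genuinely different route from the paper --- induction on $k$ via the quotient $\bar L=v^{\perp}/\langle v\rangle$ and the action $\gamma\colon\Stab(L,Is)\to\GL_k(\ZZ)$ on the plane itself --- whereas the paper works directly for each fixed $k$: it writes the Gram matrix of $L$ in a basis adapted to $Is\subset\IsperpB\subset L$, parametrizes the possible rational extensions $P$ of an isometry $Q$ of $\IsperpB$ by solving $PBP^T=B$ block by block (with $P_1,P_3$ uniquely determined, $P_2$ underdetermined with a $k(k-1)/2$-dimensional kernel), bounds the denominators of all solutions, and then applies {\bf Alg 1}/{\bf Alg 3} to an explicit finite-index sublattice $L_3$ to cut out $\Stab(L,Is)$ and the coset covering of $\Orth(\Isperp)$. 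The difficulty with your route is that it pushes the genuinely hard step --- deciding which isometries of the smaller object lift to integral isometries of $L$, and producing the lifts --- into what you call ``bookkeeping.'' Concretely: to compute $H_v$ as the preimage of $\Stab(\bar L,\overline{Is})$ under $\Stab(L,v)\to\Orth(\bar L)$ you take ``lifts of a generating set of $\Stab(\bar L,\overline{Is})$,'' but this map need not hit all of $\Stab(\bar L,\overline{Is})$; the preimage is the preimage of the intersection with the image, and computing that image is exactly an instance of the extension problem. The same issue recurs when you assert that the restriction of $\Stab(L,Is)$ to $\IsperpB$ has finite index in $\Orth(\Isperp)$ and that {\bf Alg 3} returns a transversal: {\bf Alg 3} applies to a group commensurable with its integral points acting on a lattice, and realizing $\Stab(L,Is)_{\IsperpB}$ in that form is precisely what the paper's denominator analysis achieves. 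You also assert without proof that the kernel of $\Stab(L,v)\to\Orth(\bar L)$ is generated by the integral Eichler transvections $E_{v,x}$, which is not automatic for a general odd or non-unimodular lattice.

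The second substantive gap is the treatment of $\operatorname{im}\gamma\subset\GL_k(\ZZ)$. Your generation claim $\Stab(L,Is)=\langle H_v,\ \text{lifts of generators of }\operatorname{im}\gamma\rangle$ is correct as group theory (since $\ker\gamma\cap\Stab(L,Is)\subset H_v$), but it presupposes that you can (a) compute $\operatorname{im}\gamma$ and (b) lift its generators. For (a), the Eichler-type isometries $E_{w,x}$ with $w\in Is$ only produce elementary transvections $y\mapsto y-(x.y)w$ with $(x.y)$ ranging over some ideal, so they generate a congruence-type subgroup, not obviously all of $\operatorname{im}\gamma$; and ``identified using the finite presentation of $\GL_k(\ZZ)$'' is not an algorithm for recovering an unknown finite-index subgroup from a partial list of its elements (nor is the finite-index claim itself proved). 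Part (ii) inherits both problems, since it relies on finitely many $\operatorname{im}\gamma$-orbit representatives of primitive vectors in $Is_2$ and on the same lifting step in $\bar L$. The paper avoids all of this by never decomposing $\Stab(L,Is)$ along $\gamma$: the underdetermined block $P_2$ absorbs the $\GL_k$-direction, and a single application of {\bf Alg 1} to the sublattice $L_3$ handles integrality uniformly. To repair your argument you would essentially have to import the paper's denominator-control and {\bf Alg 1}/{\bf Alg 2} machinery at each lifting step, at which point the induction on $k$ buys little.
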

\begin{proof}
(i) Let us take a basis $(e_{k+1}, \dots, e_{2k})$ of $Is$. We have $Is \subset \Isperp$ and so we can
complete this to a basis $(e_{k+1}, \dots, e_n)$ of $\IsperpB$.
We then complete this to a basis of $L$ by finding suitable vectors $(e_1, \dots, e_k)$.

The matrix of scalar products is expressed in this basis as
\begin{equation*}
B = \left(\begin{array}{ccc}
  H     &  J  &  K\\
  J^T   &  0  &  0\\
  K^T   &  0  &  A
\end{array}\right)
\end{equation*}
with $J$ a non-degenerate $k\times k$-matrix and $A$ a non-degenerate symmetric $(n-2k) \times (n-2k)$-matrix.
The matrix of scalar product of $\Isperp$ in the basis $(e_{k+1}, \dots, e_n)$ is
\begin{equation*}
C = \left(\begin{array}{cc}
0   & 0\\
0   & A
\end{array}\right).
\end{equation*}
Let us take an isometry $Q$ of $\IsperpB$. It will preserve $Is$ and its expression in $(e_{k+1}, \dots, e_n)$ is
\begin{equation*}
Q = \left(\begin{array}{cc}
Q_1   & 0\\
Q_2   & Q_3
\end{array}\right)
\end{equation*}
with $Q_3 A Q_3^T = A$. Here we recall that we use the action on row vectors from the right.

If the isometry $Q$ has an extension $P$ to $L\otimes \QQ$
then this extension satisfies $PBP^T = B$ and will necessarily
be of the form
\begin{equation*}
P = \left(\begin{array}{ccc}
  P_1 & P_2 & P_3\\
  0   & Q_1 & 0\\
  0   & Q_2 & Q_3
\end{array}\right).
\end{equation*}
When expanding the expression $P B P^T = B$ we obtain the equations
\begin{equation*}
\begin{array}{rcl}
H &=& P_1 H P_1^T + \{P_2 J^T P_1^T + P_1 J P_2^T\} + \{P_3 K^T P_1^T + P_1 K P_3^T\} + P_3 A P_3^T\\
J &=& P_1 J Q_1^T\\
K &=& P_1 J Q_2^T + P_1 K Q_3^T + P_3 A Q_3^T.
\end{array}
\end{equation*}
The second equation determines $P_1\in \GL_k(\QQ)$ uniquely. Then the third equation will determine $P_3\in M_{k,n-2k}(\QQ)$ uniquely.
However, the first equation will leave $P_2$ underdetermined which is a major complication in the case $k > 1$.

Let us take $G_1 = \Orth(\Isperp)$. We have $J^T = Q_1 J^T P_1$ which implies
\begin{equation*}
P_1^{-1} = (J^T)^{-1} Q_1 J^T.
\end{equation*}
This implies in turn that if we force $Q_1$ to preserve the lattice $L_{J^T}$ spanned by the rows of the matrix $J^T$,
then $P_1$ is integral.
By applying a conjugacy transformation and back we can apply {\bf Alg 1} to the lattice $L_{J^T}$ instead of $\ZZ^k$. So, we obtain a finite index subgroup $G_2$ of $G_1$ that preserves $L_{J^T}$.
Also using {\bf Alg 3} we can obtain the cosets of $G_2$ in $G_1$.

The equation
\begin{equation*}
  P_3 = ( K - P_1 J Q_2^T - P_1 K Q_3^T ) (Q_3^T)^{-1} A^{-1}
\end{equation*}
implies that there exist a denominator $d_3$ such that $P_3 \in \frac{1}{d_3} M_{k,n-2k}(\ZZ)$, for example $d_3 = \left\vert \det(A)\right \vert$.
The equation for $P_2$ that we obtain is
\begin{equation}\label{Equa_P2}
(P_1 J P_2^T)^T + P_1 J P_2^T = H - P_1 H P_1^T - P_3 A P_3^T - \{P_3 K^T P_1^T + P_1 K P_3^T\}
\end{equation}
We interpret this as a system of linear equations for $P_2$. Since $P_1$ and $J$ are non-degenerate we can equivalently interpret this as linear for $P_1 J P_2^T$.
The right hand side of this system of equations is symmetric. Since any equation of the form $X^T+X=M$ with $M$ symmetric obviously has a solution, e.g.  $X=M/2$,
it follows that Equation (\ref{Equa_P2}) has a solution $P_2$.
The kernel of this linear system has dimension $k(k-1)/2$.
We can find a denominator $d_2$ such that for any $Q\in G_1$
there exists a solution $P_2$ in $\frac{1}{d_2} M_{k,k}(\ZZ)$.
To be more precise a possible denominator of the right hand side of Equation \eqref{Equa_P2} is $d_3^2$.
So, a possible denominator of $P_1 J P_2^T$ is $2d_3^2$ and so a denominator of $P_2$ is $2d_3^3$.
Define $d$ as the lowest common multiple of $d_2$ and $d_3$. We define the sublattice
\begin{equation*}
L_3 = \ZZ e_1 + \dots + \ZZ e_k + \ZZ d e_{k+1} + \dots + \ZZ d e_n \subset L.
\end{equation*}
Any solution of Equation \eqref{Equa_P2} in $\frac{1}{d} M_{k,k}(\ZZ)$ will preserve $L_3$.

We define the group $H_2$ of matrices $P\in \GL_n(\QQ)$ which preserve $L_3$ and $\Isperp$ and whose restriction
to $\Isperp$ belongs to $G_2$. Thus the natural mapping $\phi:H_2\rightarrow G_2$ is surjective. By applying {\bf Alg 1}
we can get a finite index subgroup $H_3\subset \GL_n(\ZZ)$ of $H_2$. The group $H_3$ is the group $\Stab(L,Is)$, that
is the group of isometric transformation of $L$ preserving $Is$.

By applying {\bf Alg 3} we can obtain a coset decomposition of $H_3$ in $H_2$. We also have a coset decomposition of $G_2$ in $G_1$:
\begin{equation*}
H_2 = \cup_{u\in U} u H_3, \mbox{  } G_1 = \cup_{v\in V} v G_2 \mbox{ with } U \subset H_3, V\subset G_1 \mbox{ and } U,V \mbox{ finite}.
\end{equation*}
By applying $\phi$ to the first decomposition and substituting we obtain
\begin{equation*}
G_1 = \cup_{u\in U, v\in V} v \phi(u) G_3
\end{equation*}
which is the required finite coset covering. It is only a covering and not a decomposition since some of the cosets may coincide.

(ii) The process works similarly. We compute the equivalence for the spaces $L_{J_1^T}$ and $L_{J_2^T}$.
If they are not equivalent then the spaces are not equivalent. Otherwise we map the equivalence, build the corresponding spaces
and then use {\bf Alg 2} to conclude.
\end{proof}

The algorithm used in this construction is relatively complex. It would have been simpler if we had a sublattice $L'$ of $L$
such that for any $f\in O(\Isperp)$ there exists an extension that preserves $L'$. Unfortunately, we could not find a universal construction 
of such a lattice. However, in all the cases we considered, a practical algorithm allowed us to solve this problem. 

In Theorem \ref{subsection_beta_norm_vectors} we established an algorithm for computing isotropic lines. We shall now extend this to arbitrary dimension.
\begin{theorem}
There exists an algorithm for computing the orbits of isotropic  $k$-planes
of indefinite lattices $L$.
\end{theorem}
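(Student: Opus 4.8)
The plan is to induct on $k$. The case $k=1$ is exactly Theorem~\ref{subsection_beta_norm_vectors}, which returns orbit representatives of primitive isotropic vectors, i.e.\ of isotropic lines. So suppose $k\geq 2$ and that we already have an algorithm producing the $\Orth(L)$-orbits of isotropic $(k-1)$-planes of any indefinite non-degenerate lattice in the relevant range.

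First I would run the inductive step on $L$ itself to obtain orbit representatives $U_1,\dots,U_s$ of the isotropic $(k-1)$-planes. For each $U_j$ I would apply Theorem~\ref{theorem_isotropic_k_planes}(i) with $Is=U_j$: this yields the stabilizer $\Stab(L,U_j)$ together with a finite family of cosets covering $\Orth(U_j^{\perp})$ by the restriction of $\Stab(L,U_j)$ to $U_j^{\perp}$. Since $U_j$ is totally isotropic, $U_j\subseteq U_j^{\perp}$, so $U_j^{\perp}$ is degenerate with kernel $U_j$; put $\Bar{L}_j:=U_j^{\perp}/U_j$, a non-degenerate indefinite lattice of dimension $n-2(k-1)$. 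The key observation is that the isotropic $k$-planes $W$ of $L$ with $U_j\subseteq W$ are exactly the preimages in $U_j^{\perp}$ of the isotropic lines of $\Bar{L}_j$, and primitivity is preserved under this correspondence since $U_j$ and $U_j^{\perp}$ are primitive in $L$. Hence it suffices, for each $j$, to enumerate the isotropic lines of $\Bar{L}_j$ up to the action of the image $\Bar{G}_j$ of $\Stab(L,U_j)$, to lift them, and finally to remove the redundancy coming from different choices of $j$.

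To enumerate the isotropic lines of $\Bar{L}_j$ up to $\Bar{G}_j$ I would first compute the $\Orth(\Bar{L}_j)$-orbits by the case $k=1$, and then split each of them into $\Bar{G}_j$-orbits. The subgroup $\Bar{G}_j$ is obtained by pushing $\Stab(L,U_j)$ through the surjection $\Orth(U_j^{\perp})\to\Orth(\Bar{L}_j)$, whose kernel, consisting of the automorphisms acting trivially modulo $U_j$, is as described in Theorem~\ref{Solution_Aut_Equi_degenerate}; the finite coset covering of $\Orth(U_j^{\perp})$ furnished by Theorem~\ref{theorem_isotropic_k_planes}(i) descends to a finite coset covering of $\Orth(\Bar{L}_j)$ by $\Bar{G}_j$, so the orbit splitting is a finite double-coset problem, solved by the reduction-to-a-finite-quotient technique of Subsection~\ref{subsection_int_group_algo} together with the double-coset lemma proved above. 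Lifting the resulting representatives back to $U_j^{\perp}\subseteq L$ produces a finite list of isotropic $k$-planes.

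Finally I would amalgamate the lists obtained for the various $U_j$: every isotropic $k$-plane contains an isotropic $(k-1)$-plane and hence, after applying a suitable element of $\Orth(L)$, contains one of the $U_j$, so the union of the lists is exhaustive; running pairwise over it and testing $\Orth(L)$-equivalence with Theorem~\ref{theorem_isotropic_k_planes}(ii), I discard duplicates and keep one representative per orbit. Termination is clear: the recursion reduces $k$ by one while the auxiliary lattices $U_j^{\perp}$ and $\Bar{L}_j$ keep the dimension and signature within the ranges where Theorems~\ref{Existence_ApproximateModel}, \ref{Solution_Aut_Equi}, \ref{Solution_Aut_Equi_degenerate}, \ref{subsection_beta_norm_vectors} and \ref{theorem_isotropic_k_planes} apply. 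I expect the main difficulty to be exactly the third step: working with orbits under the proper subgroup $\Bar{G}_j$ rather than the full group $\Orth(\Bar{L}_j)$, and correctly bookkeeping the passage $\Orth(L)\leftrightarrow\Stab(L,U_j)\leftrightarrow\Orth(U_j^{\perp})\leftrightarrow\Orth(\Bar{L}_j)$ through the degenerate intermediate lattice $U_j^{\perp}$ — this is where Theorems~\ref{theorem_isotropic_k_planes} and \ref{Solution_Aut_Equi_degenerate} and the finite-group double-coset machinery must all be combined.
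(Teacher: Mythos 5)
Your proposal is correct and follows essentially the same route as the paper: induct on $k$, and for each representative $(k-1)$-plane enumerate isotropic lines in (a model of) its orthogonal complement modulo the radical, using the coset covering from Theorem~\ref{theorem_isotropic_k_planes}(i) to pass from the full orthogonal group of the smaller lattice to the image of the stabilizer, and Theorem~\ref{theorem_isotropic_k_planes}(ii) for the final deduplication. The only cosmetic difference is that you work with the quotient $U_j^{\perp}/U_j$ whereas the paper chooses a direct complement $K$ of $Is$ inside $\Isperp$ and enumerates isotropic vectors of $K$; these are isomorphic non-degenerate lattices, so the two formulations coincide.
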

\begin{proof}
The algorithm is constructed by induction on the dimension $k$ of the isotropic spaces starting with  $k=1$, which is Theorem \ref{subsection_beta_norm_vectors}.
Suppose we know some orbit representatives of isotropic $k-1$-dimensional planes.
For each such representative $Is$, we compute the lattice $\Isperp$ which we
decompose as a lattice sum $Is + K$. This is actually also an orthogonal decomposition since $K\subset \IsperpB$.
We enumerate the orbits of isotropic primitive vectors in
$K$ for the group $\Orth(K)$ using Theorem \ref{subsection_beta_norm_vectors}
and obtain some representatives $v_1$, \dots, $v_l$.
Those can also be interpreted as isotropic $k$-planes $Is+\ZZ v_i$ in $Is+K$
for the group $\Orth(Is+K)$.

By using Theorem \ref{theorem_isotropic_k_planes} (i) we can compute the
stabilizer $\Stab(L,Is)$ of $\Isperp$ in $L$. We can further compute a covering of the cosets of $\Stab(L,Is)$
restricted to $Is+K$ in $\Orth(Is+K)$. If the cosets are $g_1$, \dots, $g_m$
then this gets us candidates $g_j ( Is + \ZZ v_i )$ for the isotropic $k$-planes
containing $Is$ covering all orbits.

We then apply Theorem \ref{theorem_isotropic_k_planes} (ii) to compute a complete list of mutually non-equivalent isotropic $k$-planes.
\end{proof}

We also note that the algorithm can be extended to enumerating flags of isotropic spaces. We simply need to
replace the group $\GL_{\dim \ker(A)}(\ZZ)$ in Theorem \ref{Solution_Aut_Equi_degenerate}
by the integral stabilizer of the flag which is isomorphic to a group of invertible
triangular matrices.

\subsection{Relationship with work by Dawes}\label{seubsec:Dawes}
Dawes \cite{Da} also developed algorithms for orthogonal groups, in particular the computation of the Tits buildings. His work is not concerned with
moduli problems of polarized Enriques surfaces, which were the starting point of our investigations.
Here we want to comment on similarities and differences in our approaches. Some of Dawes' techniques are similar to ours.
His Algorithms 2.1 and 2.2 use the same strategy as the one we implemented. However,
Dawes does not have our integral group algorithms and so he is forced to iterate
over group elements, which can be expensive.
Instead, the author uses an alternative approach: he uses the fact that some genera are known to
have only one class (see Theorem 2.3) which allows him to prove some isomorphisms relatively easily.
However, genus theory, while computationally much easier, does not provide explicit isomorphisms and
does not give a generating set of the automorphism group of a lattice. Another idea used in \cite{Da}
is to use Vinberg's algorithm. This can be done, provided the lattice is reflective, which is clearly
a substantial restriction.
In Algorithm 3.1 Dawes' approach seems needlessly complicated, since he does not use
the notion of double coset, which is exactly what one needs when splitting orbits.
This forces him to use iteration over group elements to find the matching cosets.

\vspace*{0.7cm}
\noindent
\begin{minipage}{0.5\textwidth}
Mathieu Dutour Sikiri\'c

\noindent
Rudjer Boskovi\'c Institute,

\noindent
Bijenicka 54,

\noindent
10000 Zagreb

\noindent
Croatia

\noindent
{\tt mathieu.dutour@gmail.com}
\end{minipage}

\vspace*{0.7cm}
\noindent
\begin{minipage}{0.5\textwidth}
Klaus Hulek

\noindent
Institut f\"ur Algebraische Geometrie

\noindent
Leibniz Universit\"at Hannover

\noindent
D-30060 Hannover

\noindent
Germany

\noindent
{\tt hulek@math.uni-hannover.de}
\end{minipage}
\hfill

\end{document}